\documentclass[11pt]{amsart}

\usepackage{graphicx}
\usepackage{caption}
\usepackage{subcaption}
\usepackage{graphicx}
\usepackage{amssymb}
\usepackage{subfig}
\usepackage{enumerate}
\usepackage[utf8]{inputenc}

\def\RR{\mathbb R}

\def\ZZ{\mathbb Z}

\DeclareMathOperator{\dete}{det}
\DeclareMathOperator{\diam}{diam}

\newcommand{\set}[1]{\left\lbrace #1\right\rbrace}
\providecommand{\abs}[1]{\left\lvert#1\right\rvert}
\providecommand{\norm}[1]{\left\lVert#1\right\rVert}
\newcommand{\remove}[1]{ }

\newtheorem{theorem}{Theorem}[section]

\newtheorem{lemma}[theorem]{Lemma}
\newtheorem{remark}[theorem]{Remark}
\newtheorem{remarks}[theorem]{Remarks}

\numberwithin{equation}{section}

\begin{document}

\title[Ingham type inequalities in lattices]{Ingham type inequalities\\  in lattices}

\author[V. Komornik]{Vilmos Komornik} 
\address{16 rue de Copenhague, 67000 Strasbourg, France}
\email{vilmos.komornik@gmail.com}
\author[A. C. Lai]{Anna Chiara Lai}
\address{Sapienza Università di Roma,
Dipartimento di Scienze di Base
e Applicate per l'Ingegne\-ria,
via A. Scarpa n. 16,
00161 Roma, Italy}      
\email{anna.lai@sbai.uniroma1.it}
\author[P. Loreti]{Paola Loreti}
\address{Sapienza Università di Roma,
Dipartimento di Scienze di Base
e Applicate per l'Ingegne\-ria,
via A. Scarpa n. 16,
00161 Roma, Italy}
\email{paola.loreti@sbai.uniroma1.it}
\subjclass[2010]{42B05, 52C20}
\keywords{Fourier series, combinatorics, non-harmonic analysis, lattices, tilings.
}

\maketitle
\begin{abstract}  
A classical theorem of Ingham extended Parseval's formula of the trigonometrical system to arbitrary families of exponentials satisfying a uniform gap condition. 
Later his result was extended to several dimensions, but the optimal integration domains have only been determined in very few cases. 
The purpose of this paper is to determine the optimal connected integration domains for all regular two-dimensional lattices.
\end{abstract}

\section{Introduction}\label{s1}

A classical theorem of Ingham \cite{Ing1936} extended the Parseval's formula of the trigonometrical system to arbitrary families of exponentials satisfying a uniform gap condition. Later Beurling \cite{Beu} determined the critical length of the intervals on which these estimates hold. 

Kahane \cite{Kah1962} extended these results to several dimensions. 
His theorem was improved and generalized in \cite{BaiKomLor103} (see also \cite{KomLorbook}), but the optimal integration domains have only been determined in very particular cases. 

The purpose of this paper is to determine the optimal connected integration domains for all regular two-dimensional lattices.

\section{A general framework}\label{s2}

Consider $M$ disjoint translates of $\ZZ^N$ by vectors $u_1,\ldots, u_M\in\RR^N$, and consider the functions of the form 
\begin{equation*}
f(x)=\sum_{j=1}^M\sum_{k\in\ZZ^N}a_{jk}e^{i(u_j+k,x)}=:\sum_{j=1}^Me^{i(u_j,x)}f_j(x)
\end{equation*}
with square summable complex coefficients $a_{jk}$.

Let us observe that the functions
\begin{equation}\label{21}
f_j(x)=\sum_{k\in\ZZ^N}a_{jk}e^{i(k,x)}
\end{equation} 
are $2\pi$-periodical in each variable, so that 
\begin{equation}\label{22}
\frac{1}{\abs{\Omega_0}}\int_{\Omega_0}\abs{f_j(x)}^2\ dx=\sum_{k\in\ZZ^N}\abs{a_{jk}}^2
\end{equation}
on $\Omega_0:=(0,2\pi)^N$ by Parseval's equality for multiple Fourier series, where  and $\abs{\Omega_0}=(2\pi)^N$ denotes the volume of the cube $\Omega_0$.

Next we consider $M$ vectors $v_1,\ldots, v_M\in\RR^N$
satisfying the following two conditions:
\begin{itemize}
\item[(A1)] the coordinates of each $v_k$ are multiples of $2\pi$;
\item[(A2)] the matrix $E:=\left(e^{i(u_j,v_k)}\right)_{j,k=1}^M$ is invertible.
\end{itemize}
It follows from (A1) that the translated sets
\begin{equation*}
\Omega_k:=v_k+\Omega_0,\quad k=1,\ldots,M
\end{equation*}
are non-overlapping, i.e., their interiors are pairwise disjoint.

Finally we fix an invertible linear transformation $L$ of $\RR^N$, we introduce the lattice 
\begin{equation*}
\Lambda:=\bigcup_{j=1}^M L^*\left(u_j+\ZZ^N\right)\subset \RR^N
\end{equation*}
(here $L^*$ denotes the adjoint of $L$) and the set 
\begin{equation*}
\Omega:=L^{-1}\left(\Omega_1\cup\cdots\cup\Omega_M\right)\subset \RR^N.
\end{equation*}

\begin{remark}\label{r21}
Let us emphasize that the volume of $\abs{\Omega}$ of $\Omega$ does not depend on the particular choice of $M$ and the vectors $v_1,\ldots, v_M\in\RR^N$ satisfying (A1).
\end{remark}

We prove the following Ingham type generalization of Parseval's formula:

\begin{theorem}\label{t22}
Assume (A1) and (A2). \mbox{}

\begin{enumerate}[\upshape (i)]
\item There exist two positive constants $c_1, c_2$ such that 
\begin{equation}\label{23}
c_1\sum_{\lambda\in\Lambda}\abs{a_{\lambda}}^2 
\le \int_{\Omega}\abs{\sum_{\lambda\in\Lambda}a_{\lambda}e^{i(\lambda,x)}}^2\ dx 
\le c_2\sum_{\lambda\in\Lambda}\abs{a_{\lambda}}^2 
\end{equation}
for all square summable families $(a_{\lambda})_{\lambda\in\Lambda}$ of complex coefficients.

\item The estimates fail if we remove any non-empty open subset from $\Omega$.
\end{enumerate}
\end{theorem}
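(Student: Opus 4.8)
The plan is to reduce everything to the reference cube $\Omega_0$, where Parseval's equality \eqref{22} is available, and to let the matrix $E$ carry the remaining linear algebra.

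\emph{Part (i).} Writing $a_\lambda=a_{jk}$ whenever $\lambda=L^*(u_j+k)$ and using $(L^*(u_j+k),x)=(u_j+k,Lx)$, I would first identify $\sum_{\lambda\in\Lambda}a_\lambda e^{i(\lambda,x)}=f(Lx)$ for the $f$ of the framework; the substitution $y=Lx$ then gives $\int_\Omega\abs{\cdots}^2\,dx=\abs{\det L}^{-1}\int_{\Omega_1\cup\cdots\cup\Omega_M}\abs{f(y)}^2\,dy$. Splitting over the non-overlapping pieces $\Omega_k=v_k+\Omega_0$ and substituting $y=v_k+z$, assumption (A1) makes the $2\pi$-periodic factors satisfy $f_j(v_k+z)=f_j(z)$, so that $f(v_k+z)=\sum_j e^{i(u_j,v_k)}g_j(z)$ with $g_j(z):=e^{i(u_j,z)}f_j(z)$; in matrix form $\mathbf F(z)=E^{\mathsf T}\mathbf g(z)$, where $\mathbf F=(f(v_k+\cdot))_k$ and $\mathbf g=(g_j)_j$. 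Invertibility of $E$ from (A2) bounds $\norm{E^{\mathsf T}\mathbf g}^2$ above and below by fixed positive multiples of $\norm{\mathbf g}^2$ pointwise (the extreme singular values of $E$), and integrating over $\Omega_0$ reduces the claim to estimating $\sum_j\int_{\Omega_0}\abs{g_j}^2$. Since $\abs{g_j}=\abs{f_j}$, Parseval's equality \eqref{22} gives $\sum_j\int_{\Omega_0}\abs{g_j}^2=(2\pi)^N\sum_\lambda\abs{a_\lambda}^2$, and collecting constants (including $\abs{\det L}^{-1}$) proves (i).

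\emph{Part (ii).} The key is to observe that the same chain yields more than two-sided bounds: the synthesis map $(a_\lambda)\mapsto\sum_\lambda a_\lambda e^{i(\lambda,\cdot)}$ is in fact an isomorphism of $\ell^2(\Lambda)$ \emph{onto} $L^2(\Omega)$. Indeed, for each fixed $j$ the assignment $(a_{jk})_k\mapsto g_j$ is a bijection of $\ell^2(\ZZ^N)$ onto $L^2(\Omega_0)$ because $\set{e^{i(u_j+k,z)}}_{k\in\ZZ^N}$ is an orthogonal basis of $L^2(\Omega_0)$; hence $(a_\lambda)\mapsto\mathbf g$ is onto $L^2(\Omega_0;\CC^M)$, the invertible matrix $E^{\mathsf T}$ maps this space bijectively onto itself, and $\mathbf F\mapsto f|_{\Omega_1\cup\cdots\cup\Omega_M}$ is the canonical isometry $L^2(\Omega_0;\CC^M)\cong L^2(\Omega_1\cup\cdots\cup\Omega_M)$ coming from the partition into translated cubes. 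After undoing the $L$-substitution this makes $\set{e^{i(\lambda,x)}}_{\lambda\in\Lambda}$ a Riesz basis of $L^2(\Omega)$. Given a nonempty open $U\subset\Omega$, I would then pick any nonzero $h\in L^2(\Omega)$ supported in $U$ and expand it as $h=\sum_\lambda a_\lambda e^{i(\lambda,\cdot)}$: the coefficients satisfy $\sum_\lambda\abs{a_\lambda}^2>0$, while $\int_{\Omega\setminus U}\abs{\sum_\lambda a_\lambda e^{i(\lambda,x)}}^2\,dx=\int_{\Omega\setminus U}\abs{h}^2\,dx=0$, which destroys the lower estimate on $\Omega\setminus U$ (the upper one of course persists).

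The computations are routine; the one delicate point is the surjectivity in (ii), which is exactly where (A2) is needed in full force: without invertibility of $E$ the synthesis map would only be bounded below, giving a Riesz sequence for which deleting part of $\Omega$ could be harmless. I would also take care with the adjoint $L^*$ and the Jacobian in the change of variables, and note that $U$ open and nonempty guarantees $\abs{U}>0$, so that a nonzero $h$ genuinely supported in $U$ exists.
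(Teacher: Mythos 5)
Your proposal is correct and takes essentially the same route as the paper: reduce to the reference cube by the substitution $y=Lx$, split the integral over the non-overlapping translates $v_k+\Omega_0$, use the invertibility of $E$ from (A2) for pointwise two-sided bounds, and conclude with Parseval's equality on $\Omega_0$. For part (ii) the paper does precisely what your Riesz-basis surjectivity argument encodes, only more concretely: it solves the linear system $f(v_k+x)=\sum_j e^{i(u_j,v_k)}e^{i(u_j,x)}f_j(x)$ for the characteristic function of the removed open set, producing a nonzero coefficient family whose associated exponential sum vanishes on the rest of $\Omega$, so the lower bound fails.
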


\begin{proof}
Let us first consider the case where $L$ is the identity map. 

Using (A1) we have 
\begin{align*}
\int_{\Omega}\abs{f(x)}^2\ dx 
&=\sum_{k=1}^M\int_{\Omega_0}\abs{f(v_k+x)}^2\ dx\\ 
&=\sum_{k=1}^M\int_{\Omega_0}\abs{\sum_{j=1}^Me^{i(u_j,v_k+x)}f_j(v_k+x)}^2\ dx\\
&=\sum_{k=1}^M\int_{\Omega_0}\abs{\sum_{j=1}^Me^{i(u_j,v_k)}\cdot e^{i(u_j,x)}f_j(x)}^2\ dx.
\end{align*}

Furthermore, by (A2) there exist two positive constants $c_1', c_2'$ such that 
\begin{multline*}
c_1'\sum_{j=1}^M\abs{e^{i(u_j,x)}f_j(x)}^2
\le \sum_{k=1}^M\abs{\sum_{j=1}^Me^{i(u_j,v_k)}\cdot e^{i(u_j,x)}f_j(x)}^2\\
\le c_2'\sum_{j=1}^M\abs{e^{i(u_j,x)}f_j(x)}^2,
\end{multline*}
or equivalently 
\begin{equation*}
c_1'\sum_{j=1}^M\abs{f_j(x)}^2
\le \sum_{k=1}^M\abs{\sum_{j=1}^Me^{i(u_j,v_k)}\cdot e^{i(u_j,x)}f_j(x)}^2
\le c_2'\sum_{j=1}^M\abs{f_j(x)}^2
\end{equation*}
for all $x$. 
Integrating over $\Omega_0$ and using the last equality we obtain the estimates 
\begin{equation*}
c_1'\sum_{j=1}^M\int_{\Omega_0}\abs{f_j(x)}^2\ dx
\le \int_{\Omega}\abs{f(x)}^2\ dx\le c_2'\sum_{j=1}^M\int_{\Omega_0}\abs{f_j(x)}^2\ dx
\end{equation*}
Since $\abs{\Omega}=M\abs{\Omega_0}$ by (A2), using \eqref{22} the estimates \eqref{23}  follow with $c_1=c_1'\abs{\Omega_0}$ and $c_2=c_2'\abs{\Omega_0}$. 

Now we show that the above estimates fail if we remove from $\Omega$ a non-empty open subset $\omega$.  
We may assume that $\omega\subset\Omega_k$ for some $k\in\set{1,\ldots,M}$.

Let $f\in L^2(\Omega)$ be the characteristic function of $\omega$. 
Thanks to Assumption (A2) the linear system
\begin{equation*}
f(v_k+x)=\sum_{j=1}^Me^{i(u_j,v_k)}e^{i(u_j,x)}f_j(x),\quad k=1,\ldots, M
\end{equation*}  
has a unique solution 
\begin{equation*}
e^{i(u_j,x)}f_j(x),\quad j=1,\ldots, M
\end{equation*}
for each $x\in\Omega_0$, and $f_1,\ldots, f_M\in L^2(\Omega_0)$. 
Extending the functions $f_j$ by $2\pi$-periodicity in each variable, we get \eqref{21} for each $j$ with square summable coefficients $a_{jk}$. 
Since, furthermore, 
\begin{equation*}
f(x)=\sum_{j=1}^Me^{i(u_j,x)}f_j(x)
\end{equation*}
by Assumption (A1), we conclude that
\begin{equation*}
f(x)=\sum_{j=1}^M\sum_{k\in\ZZ^N}a_{jk}e^{i(u_j+k,x)}
\end{equation*}
in $\Omega$. 

Since $\omega$ has a positive measure, the coefficients $a_{jk}$ do not vanish identically. 
On the other hand, 
\begin{equation*}
\int_{\Omega\setminus\omega}\abs{f(x)}^2\ dx=0,
\end{equation*}
so that the first estimate of \eqref{22} fails.

In order to complete the proof of the first part of the theorem it suffices to show that if the estimates \eqref{23} hold for some $\Lambda$ and $\Omega$, and $L$ is an invertible linear transformation of $\RR^N$, then the estimates 
\begin{equation*}
c_1\sum_{\lambda\in\Lambda}\abs{a_{\lambda}}^2 
\le \frac{1}{\abs{L^{-1}(\Omega)}}\int_{L^{-1}(\Omega)}\abs{\sum_{\lambda\in\Lambda}a_{\lambda}e^{i(L^*\lambda,x)}}^2\ dx 
\le c_2\sum_{\lambda\in\Lambda}\abs{a_{\lambda}}^2 
\end{equation*}
also hold. 
This follows from the change of variable formula: if $x=Lx'$, then 
\begin{equation*}
\int_{\Omega}\abs{\sum_{k\in\ZZ^N}a_ke^{i(k,x)}}^2\ dx=
\abs{\det L}\int_{L^{-1}(\Omega)}\abs{\sum_{k\in\ZZ^N}a_ke^{i(L^*k,x')}}^2\ dx',
\end{equation*}
where $\det L$ denotes the determinant of $L$, and 
\begin{equation*}
\abs{L^{-1}(\Omega)}=\frac{\abs{\Omega}}{\abs{\det L}}.
\end{equation*}

Since $L$ transforms non-empty open sets into non-empty open sets, the second part of the theorem also holds in the general case.
\end{proof}

\begin{remarks}\label{r23}\mbox{}

\begin{enumerate}[\upshape (i)]
\item A standard application of the triangle inequality implies that the assumptions (A1) and (A2) are not necessary for the second inequality in \eqref{23}.
\item The assumption (A2) is not necessary for Part (ii) of the theorem. 
This may be shown by taking a maximal subset of the vectors for which the corresponding columns of the matrix $E$ are linearly independent, and by completing this subset to a new set of vectors satisfying  (A1) and (A2).
\end{enumerate}
\end{remarks}

Given a lattice
\begin{equation}\label{24}
\Lambda:=\bigcup_{j=1}^M L^*\left(u_j+\ZZ^N\right)\subset \RR^N
\end{equation}
we may wonder whether we there exists another representation 
\begin{equation}\label{25}
\Lambda=\bigcup_{j=1}^{M_0} L_0^*\left(\tilde u_j+\ZZ^N\right)\subset \RR^N
\end{equation}
with another invertible matrix $L_0$ and a smaller integer $M_0$. 
As we will see in the rest of the paper choosing the minimal $M$ may substantially simplify the study of optimal integration domains. 

The following simple condition will allow us to determine the smallest $M$ in all but one of the examples in this work. 
Given two points $a,b\in\RR^N$, let us introduce the lattice
\begin{equation*}
\Lambda(a,b):=\set{a+k(b-a)\ : k\in\ZZ}
\end{equation*}
generated by $a$ and $b$.

\begin{lemma}\label{l24}
If there exist $M$ points $a_1,\ldots,a_M\in \RR^N$ such that $\Lambda(a_i,a_k)\not\subset\Lambda$ for all $i\ne k$, then the number $M$ in the representation \eqref{24} of $\Lambda$ is the smallest possible.
\end{lemma}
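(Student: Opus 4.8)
The plan is to show that \emph{every} representation of $\Lambda$ of the form \eqref{25} must involve at least $M$ cosets; since the given representation \eqref{24} uses exactly $M$, this proves that $M$ is the smallest possible. So I would fix an arbitrary competing representation $\Lambda=\bigcup_{j=1}^{M_0}L_0^*(\tilde u_j+\ZZ^N)$ and set $\Gamma_0:=L_0^*\ZZ^N$, which is a full-rank sublattice of $\RR^N$ because $L_0$ is invertible. Since $L_0^*$ is linear, each summand equals the coset $L_0^*\tilde u_j+\Gamma_0$, so that $\Lambda$ is precisely a union of $M_0$ cosets of $\Gamma_0$. The whole proof then reduces to bounding the number of distinct cosets of $\Gamma_0$ that the points $a_1,\ldots,a_M$ can occupy.

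The key observation I would isolate is the following: if two points $a,b\in\Lambda$ lie in one and the same coset of $\Gamma_0$, i.e.\ $a-b\in\Gamma_0$, then the entire line-lattice $\Lambda(a,b)$ is contained in $\Lambda$. Indeed, let $w+\Gamma_0$ be the coset containing $a$ and $b$; each point $a+k(b-a)$ of $\Lambda(a,b)$ differs from $a$ by $k(b-a)\in\Gamma_0$, hence again lies in $w+\Gamma_0\subset\Lambda$. Reading this in contrapositive form, the hypothesis $\Lambda(a_i,a_k)\not\subset\Lambda$ for all $i\ne k$ forces $a_i$ and $a_k$ to lie in \emph{distinct} cosets of $\Gamma_0$ whenever $i\ne k$. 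This is the step that uses that the points $a_i$ are taken in $\Lambda$: for two points of $\Lambda$, lying in a common coset is exactly the obstruction detected by $\Lambda(a_i,a_k)\subset\Lambda$.

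Consequently $a_1,\ldots,a_M$ occupy $M$ pairwise distinct cosets of $\Gamma_0$; as each $a_i$ belongs to $\Lambda$ and $\Lambda$ is the union of only $M_0$ such cosets, a pigeonhole count yields $M_0\ge M$. Since $L_0$ and $M_0$ were arbitrary, no representation \eqref{25} can use fewer than $M$ cosets, while \eqref{24} attains $M$; hence $M$ is the smallest possible.

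I do not anticipate a serious obstacle: once one recognises that each admissible representation is nothing but a decomposition of $\Lambda$ into cosets of a full-rank sublattice, the statement becomes a pigeonhole argument. The only delicate point, worth stating carefully, is the passage from ``$a_i,a_k$ in a common coset'' to ``$\Lambda(a_i,a_k)\subset\Lambda$'', which relies on the translation-invariance of cosets together with the membership $a_i\in\Lambda$.
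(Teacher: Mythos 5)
Your proposal is correct and follows essentially the same route as the paper's own proof: two points of $\Lambda$ lying in a common coset $L_0^*(\tilde u_j+\ZZ^N)$ would force $\Lambda(a_i,a_k)\subset L_0^*(\tilde u_j+\ZZ^N)\subset\Lambda$, so the hypothesis places the $a_i$ in pairwise distinct cosets and a count gives $M\le M_0$. The only difference is one of explicitness: you spell out the coset-invariance computation and the requirement that the points $a_i$ belong to $\Lambda$ (needed so that each $a_i$ occupies \emph{some} coset), both of which the paper leaves tacit.
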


\begin{proof}
If two points $a_i$ and $a_k$ belong to the same set $L_0^*\left(\tilde u_j+\ZZ^N\right)$ in another representation \eqref{25}, then 
\begin{equation*}
\Lambda(a_i,a_k)\subset L_0^*\left(\tilde u_j+\ZZ^N\right)\subset\Lambda,
\end{equation*}
contradicting our hypothesis. 
Therefore each point $a_i$ corresponds to a different $j$, and thus $M\le M_0$.
\end{proof}

\section{Triangular and hexagonal lattices}\label{s3}

We illustrate Theorem \ref{t22} by two examples.

\subsection{Regular triangular lattice}\label{ss31}
Choosing\footnote{We write the vectors as row vectors but we consider them as column vectors in the computations with matrices.}
\begin{equation*}
N=2,\quad M=1,\quad u_1=v_1=(0,0)
\end{equation*}
and 
\begin{equation*}
L= 
\begin{pmatrix}
1&0\\ \frac{1}{2}&\frac{\sqrt{3}}{2}
\end{pmatrix}
\end{equation*}
(as usual, we identify the linear transformations with their matrices in the canonical basis of $\RR^2$),
\begin{equation*}
\Lambda=\set{\left(k_1+\frac{k_2}{2},\frac{\sqrt{3}k_2}{2}\right)\ :\ k\in\ZZ^2}
\end{equation*}
is a triangular lattice formed by equilateral triangles of unit side. Furthermore, since
\begin{equation*}
L^{-1}= 
\begin{pmatrix}
1&0 \\ 
\frac{-1}{\sqrt{3}}&\frac{2}{\sqrt{3}}
\end{pmatrix}
,
\end{equation*}
$\Omega=L^{-1}(\Omega_0)$ is a parallelogram of vertices 
\begin{equation*}
(0,0),\quad \left(2\pi,-\frac{2\pi}{\sqrt{3}}\right),\quad \left(0,\frac{4\pi}{\sqrt{3}}\right),\quad \left(2\pi,\frac{2\pi}{\sqrt{3}}\right).
\end{equation*}
Its area is equal to $\frac{8\pi^2}{\sqrt{3}}$; see Figure \ref{triangular_fig}. 
\begin{figure}
 \begin{subfigure}{0.4\textwidth}
  {\includegraphics[scale=0.4]{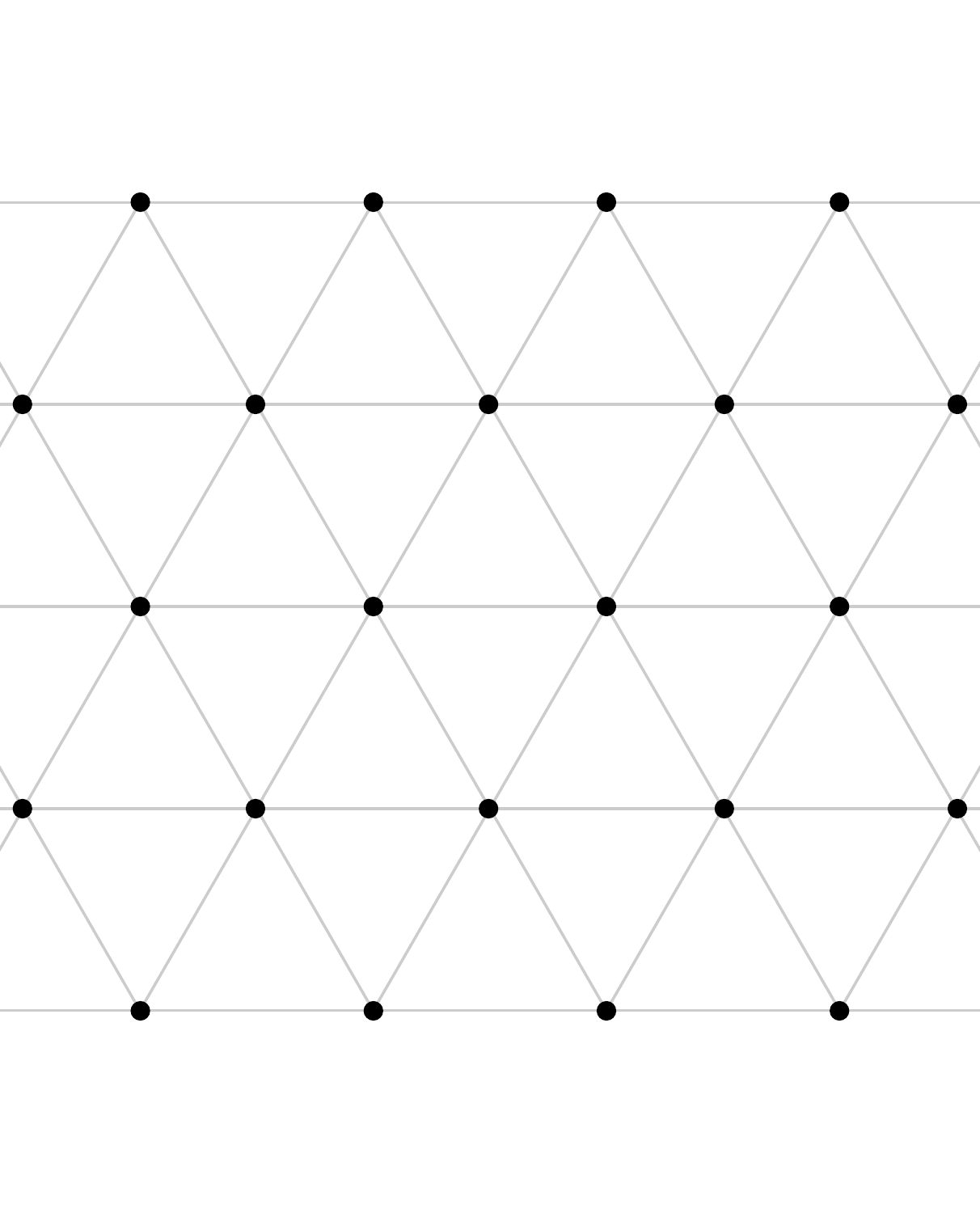}}
 \caption{$\Lambda$}
 \end{subfigure}
 \hskip1cm
 \begin{subfigure}{0.4\textwidth}
 {\includegraphics[scale=0.4]{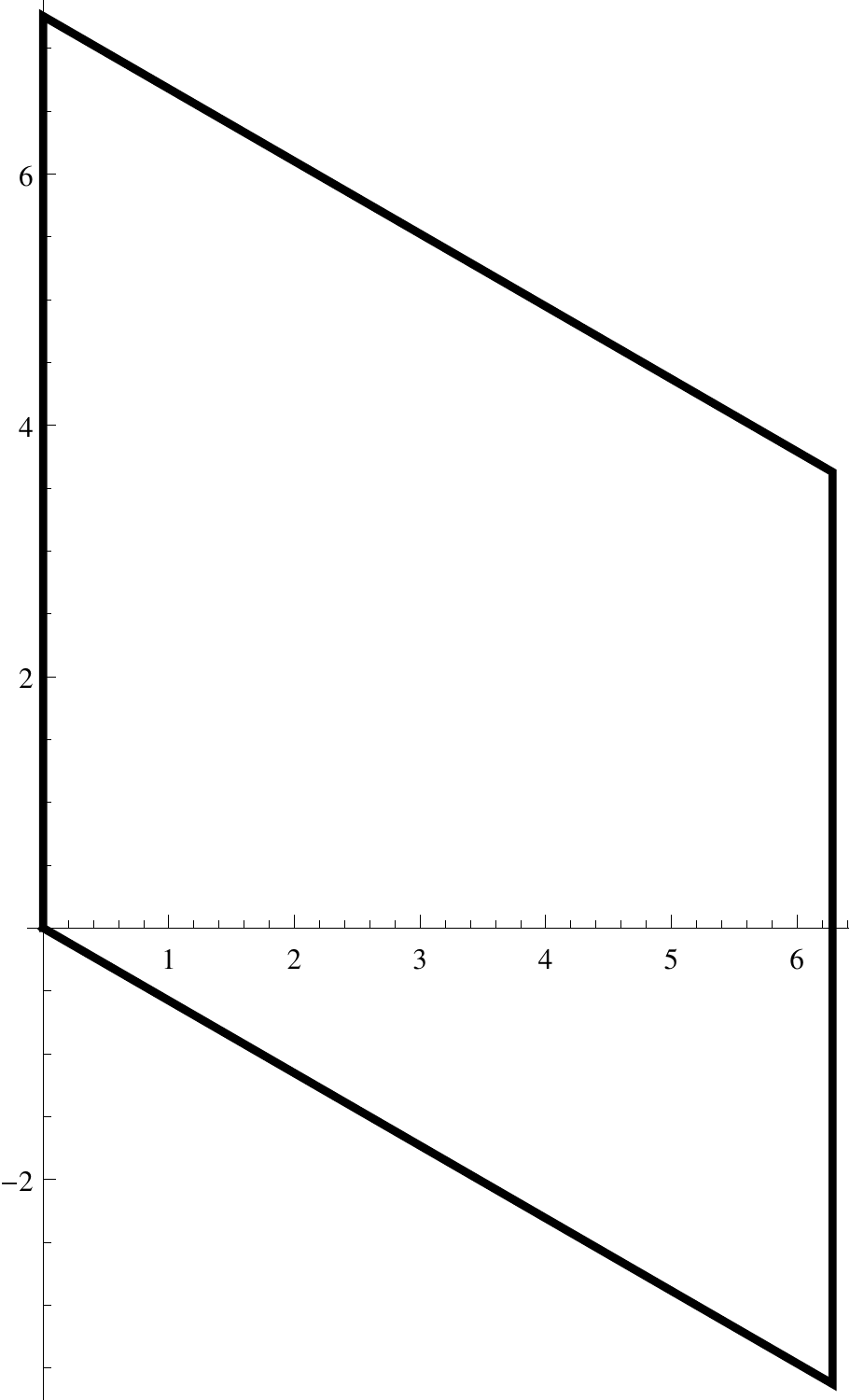}}
 \caption{$L^{-1}(\Omega_0)$}
 \end{subfigure}
   \caption{\label{triangular_fig}Hexagonal lattice} 
\end{figure}

Since every disk $D_R$ of radius
\begin{equation*}
R>\frac{1}{2}\diam(\Omega)=\frac{1}{2}\norm{ \left(0,\frac{4\pi}{\sqrt{3}}\right)-\left(2\pi,-\frac{2\pi}{\sqrt{3}}\right)}
=2\pi\approx 6.28
\end{equation*}
contains a translate of $\Omega$, Theorem \ref{t22} implies that if $R>2\pi$, then
\begin{equation}\label{31}
c_1(R)\sum_{\lambda\in\Lambda}\abs{a_{\lambda}}^2 
\le \int_{D_R}\abs{\sum_{\lambda\in\Lambda}a_{\lambda}e^{i(\lambda,x)}}^2\ dx 
\le c_2(R)\sum_{\lambda\in\Lambda}\abs{a_{\lambda}}^2 
\end{equation}
with two positive constants $c_1(R), c_2(R)$, for all square summable families $(a_{\lambda})_{\lambda\in\Lambda}$ of complex coefficients.

In fact, these estimates hold under the weaker condition $R>2\rho_2\approx 4.8096$, where $\rho_2\approx 2.4048$ denotes the smallest positive root of the Bessel function $J_0(x)$. 
This follows by applying \cite[Theorem 8.1]{KomLorbook} and a following remark on the same page with $p=2$ and $\gamma=1$. 

On the other hand, it follows from Remark \ref{r21} and Remark \ref{r23} (ii) that if \eqref{31} holds for some disk $D_R$ of radius $R$, then the area of this disk is bigger than equal to the area of $\Omega$: 
\begin{equation*}
R^2\pi\ge \frac{8\pi^2}{\sqrt{3}}\Longleftrightarrow R\ge \frac{2\sqrt{2\pi}}{3^{1/4}}\approx 3.8.
\end{equation*}
Indeed, a smaller disk could be covered by a set 
\begin{equation*}
\Omega:=L^{-1}\left(\Omega_1\cup\cdots\cup\Omega_M\right)\subset \RR^2
\end{equation*}
for a sufficiently large number of vectors $v_1,\ldots, v_M$ satisfying (A1).

It would be interesting to determine the critical radius $R$ for the validity of \eqref{31}. 

\subsection{Regular hexagonal lattice}\label{ss32}
Now we choose
\begin{equation*}
N=M=2,\quad u_1=(0,0),\quad u_2=(2/3,-1/3)
\end{equation*}
and
\begin{equation*}
L=\sqrt{3}
\begin{pmatrix}
\frac{\sqrt{3}}{2}&\frac{1}{2}\\0&1
\end{pmatrix}
.
\end{equation*}
Now $\Lambda$ is the honeycomb lattice of unit side, see Figure \ref{honey}.
Furthermore, since 
\begin{equation*}
L^{-1}
=
\frac{1}{\sqrt{3}}
\begin{pmatrix}
\frac{2}{\sqrt{3}}&\frac{-1}{\sqrt{3}}\\0&1
\end{pmatrix}
,
\end{equation*}
$L^{-1}(\Omega_0)$ is the parallelogram of vertices
\begin{equation*}
(0,0),\quad \left(-\frac{2\pi}{3},\frac{2\pi}{\sqrt{3}}\right),\quad \left(\frac{2\pi}{3},\frac{2\pi}{\sqrt{3}}\right),\quad \left(\frac{4\pi}{3},0\right);
\end{equation*}
see Figure \ref{hexagonal_fig}.

If we choose $v_1=(0,0)$ and $v_2=(2\pi,0)$, then the conditions (A1) and (A2) are satisfied because 
\begin{equation*}
\dete E=
\begin{vmatrix}
1&1\\
1&e^{i \frac{4 \pi }{3}}
\end{vmatrix}\not=0.
\end{equation*}
Furthermore, 
\begin{equation*}
\Omega=L^{-1}(\Omega_0\cup(\Omega_0+v_2))
\end{equation*}
is the parallelogram of vertices
\begin{equation*}
(0,0),\left(-\frac{2\pi}{3},\frac{2\pi}{\sqrt{3}}\right),\left({2\pi},\frac{2\pi}{\sqrt{3}}\right),\left(\frac{8\pi}{3},0\right);
\end{equation*}
its area of the latter one is equal to $\frac{16\pi^2}{3\sqrt{3}}$.
See Figure \ref{hexagonal_fig}.

\begin{figure}
  {\includegraphics[scale=0.4]{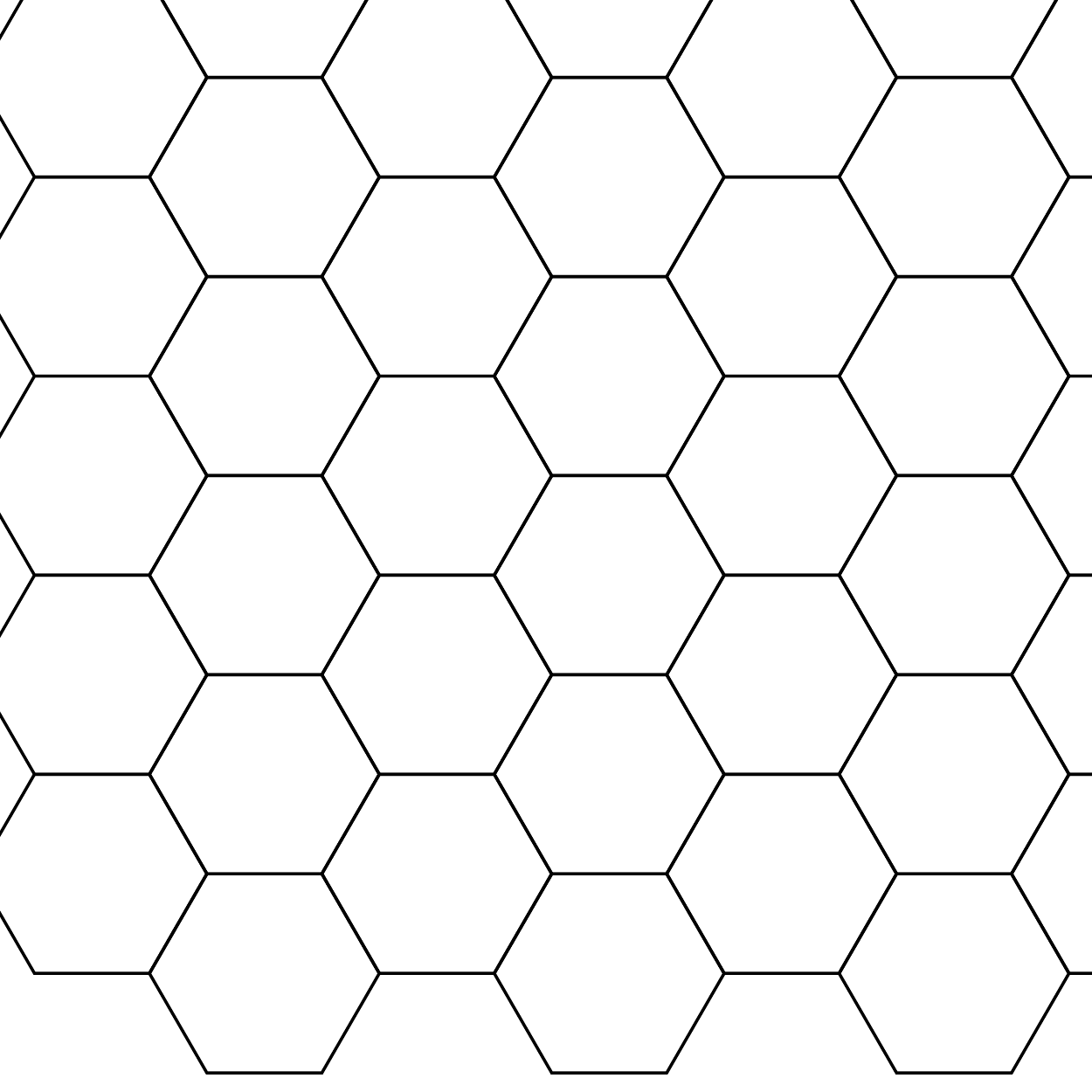}}
 \caption{The honeycomb lattice $\Lambda$\label{honey}}
\end{figure}
\begin{figure}
 {\includegraphics[scale=0.6]{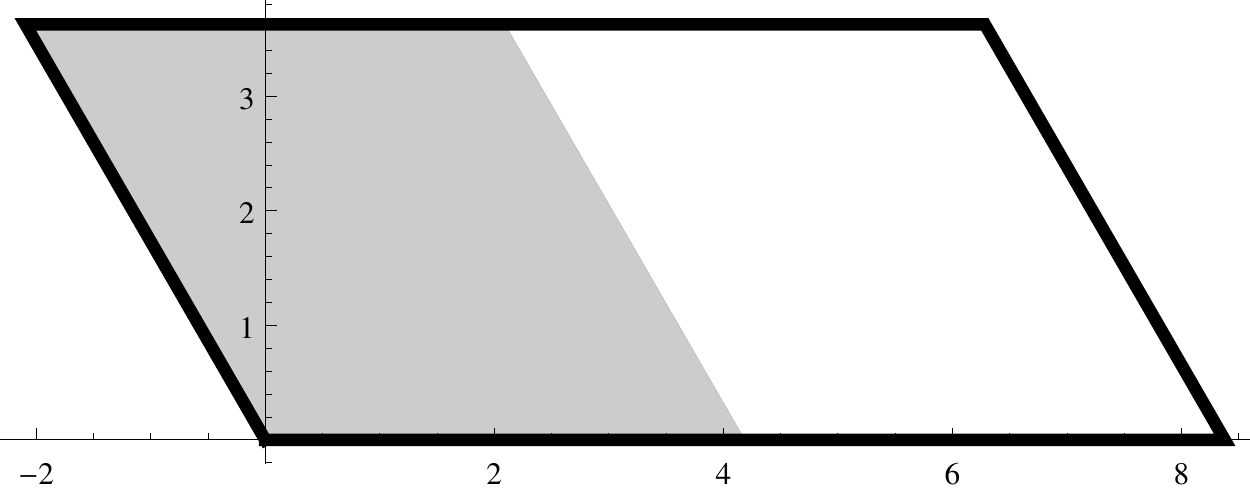}}
  \caption{Domain $\Omega$ associated to the honeycomb lattice and to $v_2=(2\pi,0)$, the shadowed area corresponds to its subset $L^{-1}(\Omega_0)$.\label{hexagonal_fig}} 
\end{figure}
 
\begin{remark}\label{r31}
If we compare the case of the parallelogram lattice and the hexagonal lattice, then we see that the integration parallelogram 
is $1.5$ times larger in the first case. 
This corresponds to the fact that the density of the corresponding lattice is also $1.5$ times larger.
\end{remark}

Since every disk $D_R$ of radius
\begin{equation*}
R>\frac{1}{2}\diam(\Omega)=\frac{1}{2}\norm{\left(-\frac{2\pi}{3},\frac{2\pi}{\sqrt{3}}\right)- \left(\frac{8\pi}{3},0\right)}
=\frac{2\pi\sqrt{7}}{3}\approx 5.54
\end{equation*}
contains a translate of $\Omega$, Theorem \ref{t22} implies that if $R>2\pi\sqrt{7}/3$, then the estimates \eqref{31} hold with two positive constants $c_1(R), c_2(R)$, for all square summable families $(a_{\lambda})_{\lambda\in\Lambda}$ of complex coefficients.

If we choose $v_1=(0,0)$ and $v_2=(0,2\pi)$ instead, then the conditions (A1) and (A2) are still satisfied because the matrix $E$ remains the same:
\begin{equation*}
E=
\begin{pmatrix}1&1\\
    1&e^{-i \frac{2 \pi }{3}}
\end{pmatrix}
=
\begin{pmatrix}
    1&1\\
    1&e^{i \frac{4 \pi }{3}}
\end{pmatrix}
.
\end{equation*}
Now
\begin{equation*}
\Omega=L^{-1}(\Omega_0\cup(\Omega_0+v_2))
\end{equation*}
is the parallelogram of vertices
\begin{equation*}
(0,0),\left(-\frac{4\pi}{3},\frac{4\pi}{\sqrt{3}}\right),\left(0,\frac{4\pi}{\sqrt{3}}\right),\left(\frac{4\pi}{3},0\right);
\end{equation*}
its area is still equal to $\frac{16\pi^2}{3\sqrt{3}}$.
See Figure \ref{hexagonal_fig2}.
\begin{figure}
 {\includegraphics[scale=0.6]{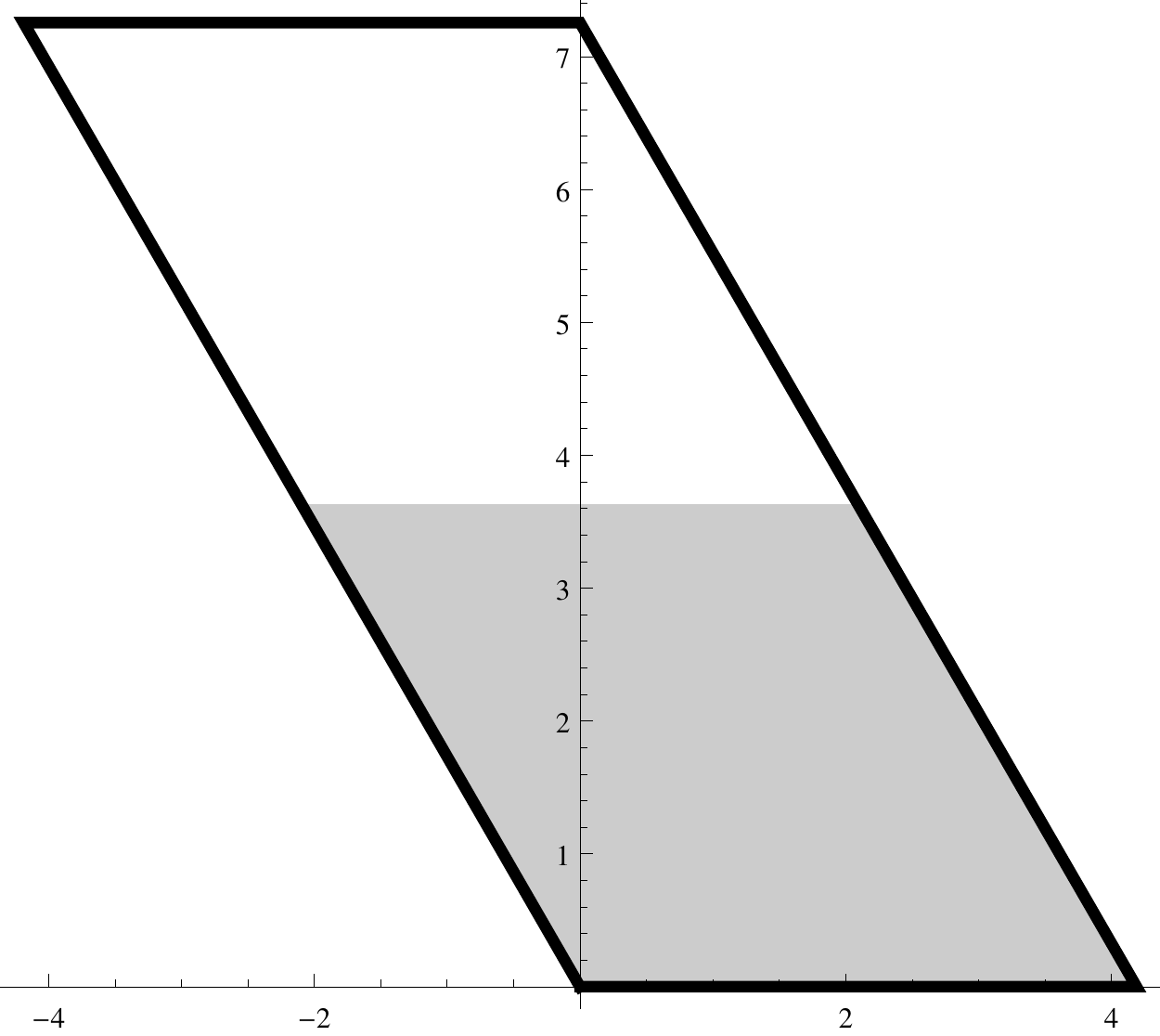}}
  \caption{Domain $\Omega$ associated to the honeycomb lattice and to $v_2=(0,2\pi)$, the shadowed area corresponds to its subset $L^{-1}(\Omega_0)$.\label{hexagonal_fig2}} 
\end{figure}
 
Since 
\begin{equation*}
\frac{1}{2}\diam(|\Omega|)=\frac{1}{2}\norm{\left(-\frac{4\pi}{3},\frac{4\pi}{\sqrt{3}}\right)- \left(\frac{4\pi}{3},0\right)}=\frac{2\pi\sqrt{7}}{3} \approx 5.54,
\end{equation*}
we obtain the same condition for the validity of \eqref{31} as before.

As in the preceding case, we may apply \cite[Theorem 8.1]{KomLorbook} $p=2$ and $\gamma=1$ to conclude that the estimates \eqref{31} hold under the weaker condition $R>2\rho_2\approx 4.8096$.
This also follows from the fact that the hexagonal lattice is a sublattice of the triangular one.

On the other hand, the validity of \eqref{31} implies that 
\begin{equation*}
R^2\pi\ge \frac{16\pi^2}{3\sqrt{3}}\Longleftrightarrow R\ge \frac{4\sqrt{\pi}}{3^{3/4}}\approx 3.11.
\end{equation*}

It would be interesting to determine the critical radius $R$ for the validity of \eqref{31}. 

\remove{Also notice that the square root of the first eigenvalue of the Laplacian on the hexagon of unit edge is $\sqrt{\mu}\sim 2.67$, see \cite{hex1} and \cite{hex2},
(then the first eigenvalue of the Laplacian on the hexagon of edge equal 1/2 is $5.34$). }

\section{Tiling of the plane by two different squares}\label{s4}

Let us consider the tiling of $\RR^2$ with two squares of different sides $R>r$ as shown on the Figure \ref{fig_twosquares}.
\begin{figure}
 \includegraphics[scale=0.5]{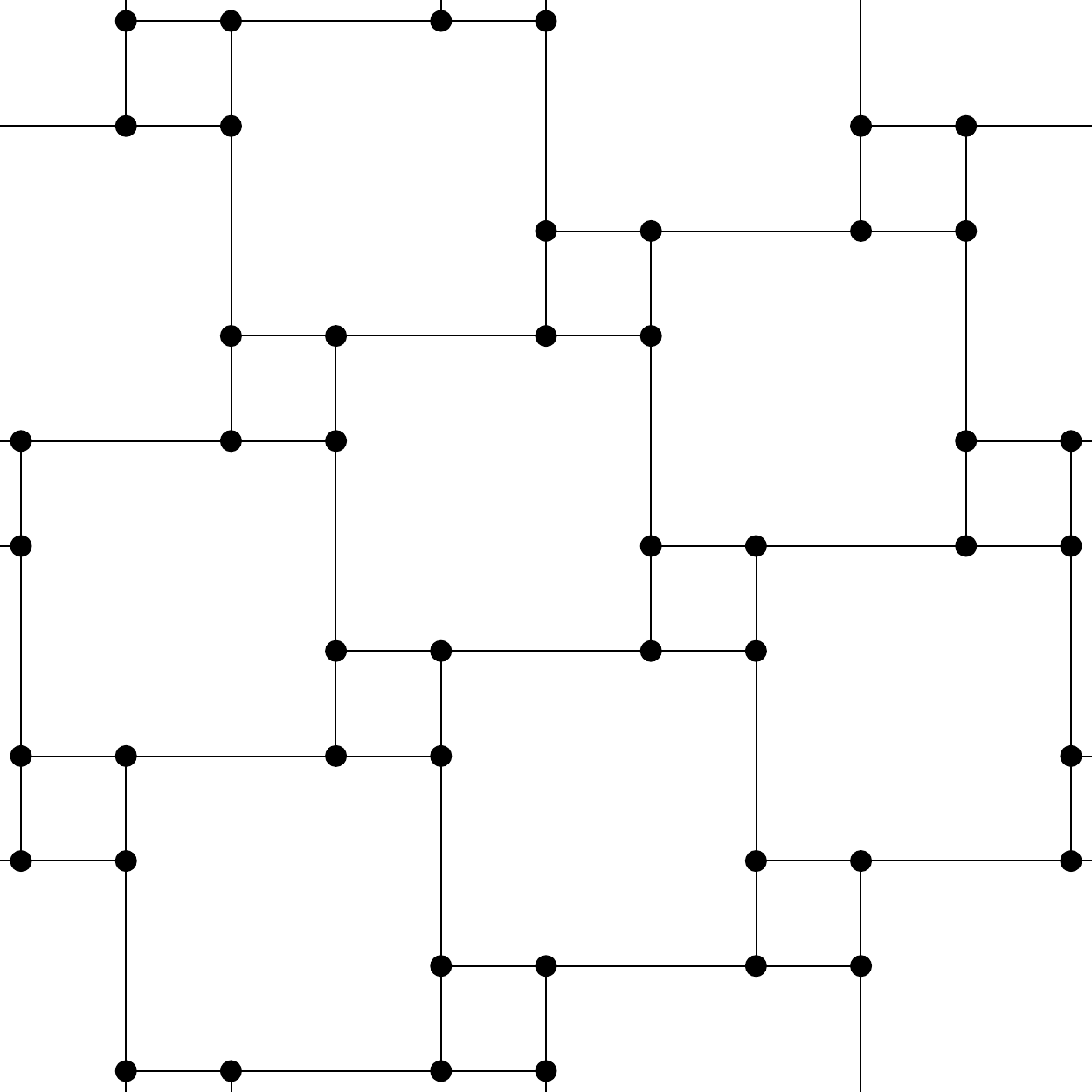}
 \caption{\label{fig_twosquares} Tiling by squares of side $r=1$ and $R=3$}
\end{figure}

Translating and rotating the tiling such that segments connecting the centers of the closest small squares are parallel to the coordinate axes and that the origin is one of these centers, we have 
\begin{equation*}
\Lambda=L^*\left(\sum_{j=1}^4\left(u_j+\ZZ^2\right)\right)
\end{equation*}
where $L$ is the homothety of coefficient $\sqrt{R^2+r^2}$, and the vectors $u_j$ are defined by the formulas $\alpha:=\arctan \frac{r}{R}$ and
\begin{equation*}
u_j=\frac{r}{\sqrt{2(R^2+r^2)}}(\cos (-\alpha+j\pi/2),\cos (-\alpha+j\pi/2)),\quad j=1,2,3,4.
\end{equation*}
See Figure \ref{fig_twosquares_details}.
\begin{figure}
 \begin{subfigure}{0.4\textwidth}
{\includegraphics[scale=0.4]{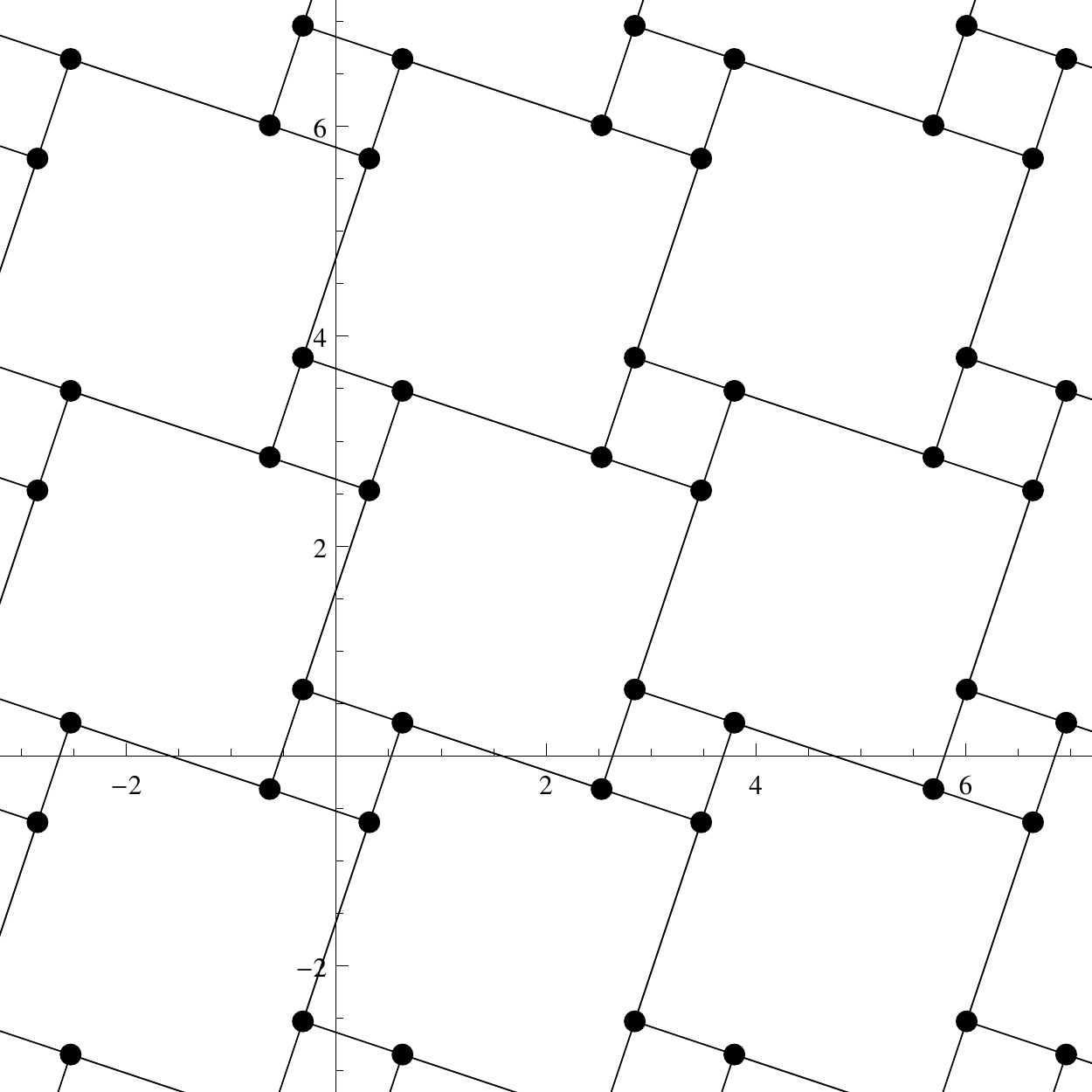}}\caption{}
   \end{subfigure}\hskip1cm
 \begin{subfigure}{0.4\textwidth}
{\includegraphics[scale=0.4]{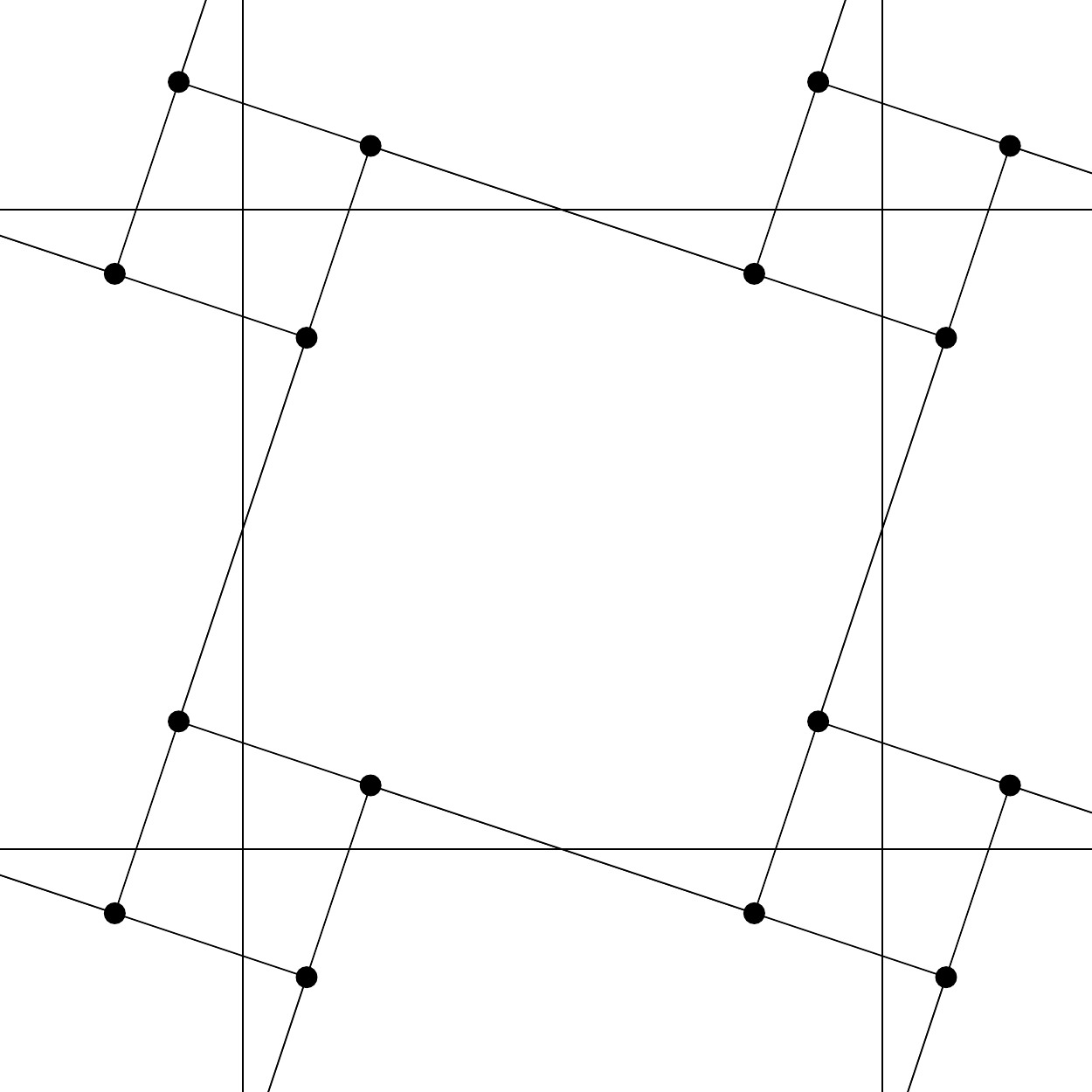}}\caption{}
   \end{subfigure}
 \caption{Geometric construction of the decomposition of $\Lambda$: the lattice is rotated by the angle $\alpha$ (see (A)) so that the centers of the small squares lay on $M\ZZ^2$ -- see (B). 
 The translation vectors $u_1,\dots,u_4$ are the vertices of the small square centered in the origin. \label{fig_twosquares_details}}
\end{figure}

Choosing 
\begin{equation*}
\set{v_1,\ldots, v_4}:=2\pi\set{(0,0), (1,0), (0,1), (1,1)}
\end{equation*}
the conditions (A1), (A2) are satisfied (see Remark \ref{rmkA2} below) and Theorem \ref{t22} may be applied with 
\begin{equation*}
\Omega=\left(0,\frac{4\pi}{\sqrt{R^2+r^2}}\right)^2.
\end{equation*}
For some examples of domains satisfying (A2) see Figures \ref{r2}--\ref{r5}. 
\begin{remark}\label{rmkA2}
Fix $r<R$ and let $A=\frac{r}{\sqrt{2(R^2+r^2)}}$ so that 
\begin{equation*}
e^{i (u_j ,v_k)}=e^{iA2\pi \cos(-\alpha+j)v_k^{(1)}+ \sin(-\alpha+j)v_k^{(2)}}.
\end{equation*}
Let $C:=e^{A i \pi \cos \alpha}$ and $D:= e^{A i \pi \sin \alpha}$. 
We have 
\begin{equation*}
E:=(e^{i (u_j ,v_k)})_{j,k}^2=
\begin{pmatrix}
1&1&1&1\\
C^{-1}&D&C&D^{-1}\\
D&C&D^{-1}&C^{-1}\\
DC^{-1}&CD&CD^{-1}&(CD)^{-1}
\end{pmatrix}
\end{equation*}
and 
\begin{equation*}
\Delta:=\dete E=(C^2-1)(D^2-1)(C^2D^2-4CD+C^2+D^2+1)
\end{equation*}
by a direct computation.

Since $\alpha\in(0,\pi/2)$ by definition, we have $A\cos\alpha, A\sin\alpha\in(0,1)$, and thus $C^2,D^2\not=1$. 
In order to prove $\Delta\not=0$, it suffices to show that
\begin{equation*}
C^2D^2-4CD+C^2+D^2+1\not=0.
\end{equation*}
We will show that even the imaginary part of this expression is different from zero.

Setting $\beta=\pi\cos\alpha$ and $\gamma=\pi\sin\alpha$ we have
\begin{align*}
\Im(C^2D^2&-4CD+C^2+D^2+1)\\&=\sin(2\beta+2\gamma)-4 \sin(\beta+\gamma)+\sin(2\beta)+\sin(2\gamma)\\
&=2\sin(\beta+\gamma)\cos(\beta+\gamma)-4 \sin(\beta+\gamma)+2\sin(\beta+\gamma)\cos(\beta-\gamma)\\
&=2\sin(\beta+\gamma)(\cos(\beta+\gamma)-2 +\cos(\beta-\gamma))\\
&=4\sin(\beta+\gamma)(\cos\beta\cos\gamma-1).
\end{align*}
Since $\cos\alpha, \sin\alpha\in(0,1)$, we have
\begin{equation*}
\cos \alpha+\sin \alpha\not=1\quad\text{and}\quad \cos\beta,\cos\gamma\in(-1,1).
\end{equation*}
They imply the inequalities
\begin{equation*}
\sin(\beta+\gamma)\not=0\quad\text{and}\quad \cos\beta\cos\gamma-1\not=0,
\end{equation*}
respectively.
This concludes the proof. 
\end{remark}

\begin{figure}
 \includegraphics[scale=0.4]{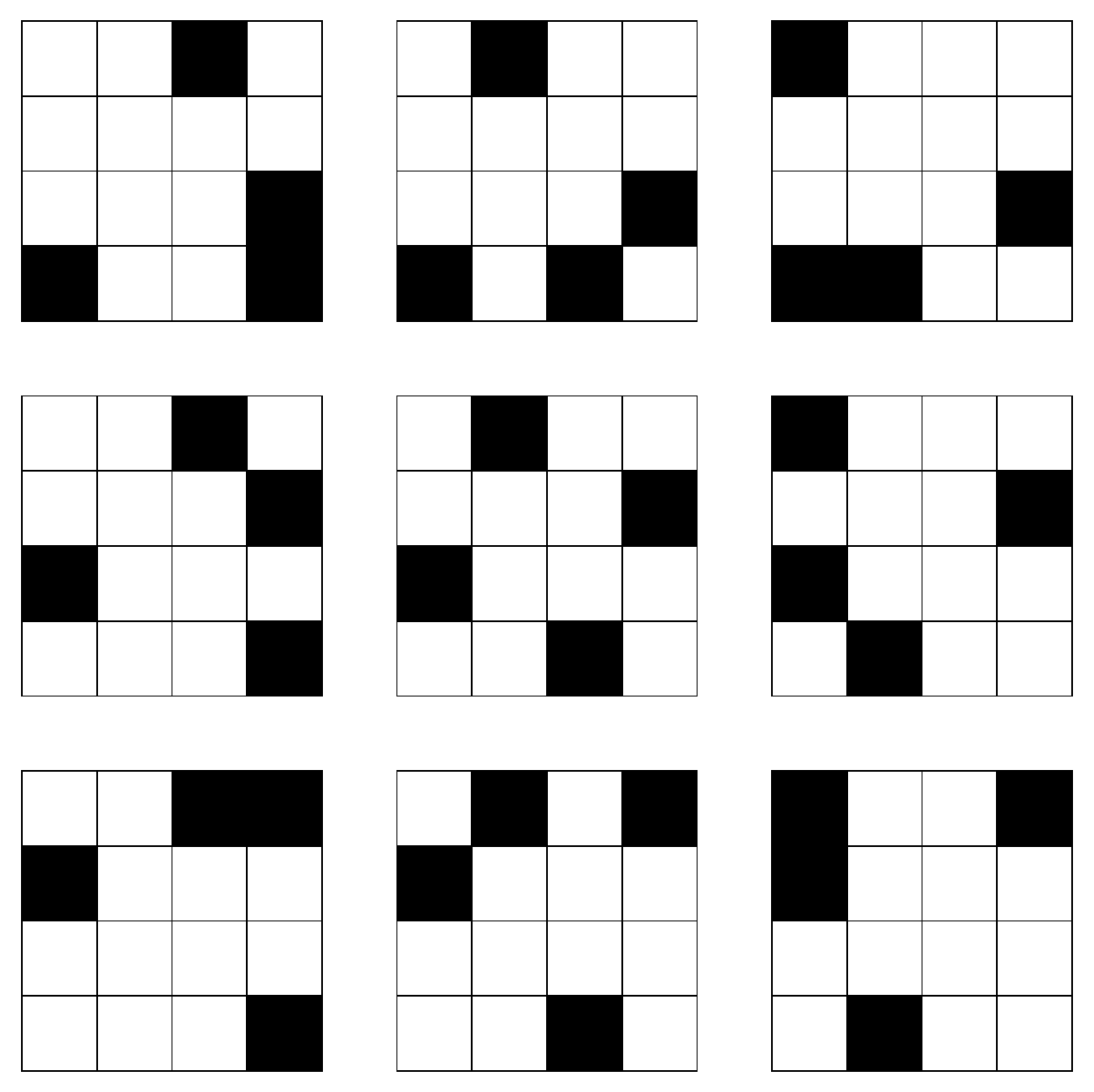}
\caption{Two square tilings: the $9$ (over 1820) domains  of the form $\cup_{k=1}^4 \Omega_0+v_k$ with $(v_k)$ not satisfying condition (A2) when $r=1$ and $R=2$.\label{r2}}
 \end{figure}

\begin{figure}
 \includegraphics{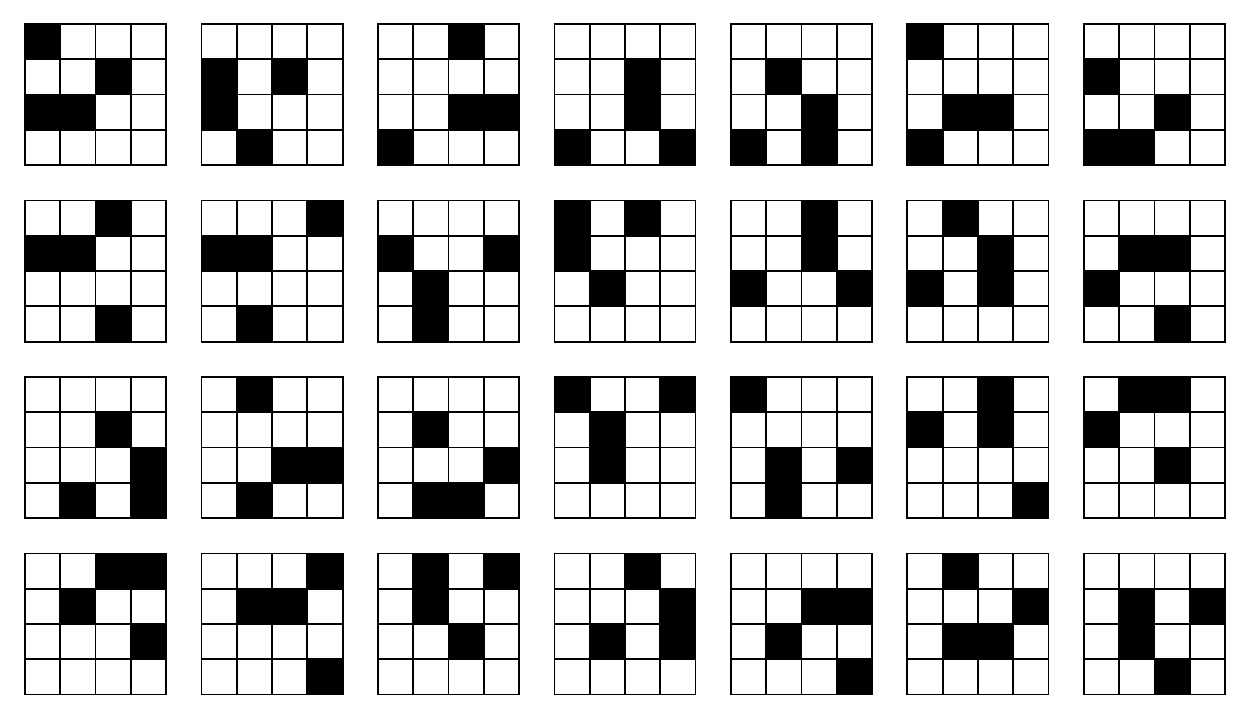}
\caption{Two square tilings: the $28$ (over 1820) domains  of the form $\cup_{k=1}^4 \Omega_0+v_k$ with $(v_k)$ not satisfying condition (A2) when $r=1$ and $R=3$.\label{r3}}
 \end{figure}

\begin{figure}
 \includegraphics{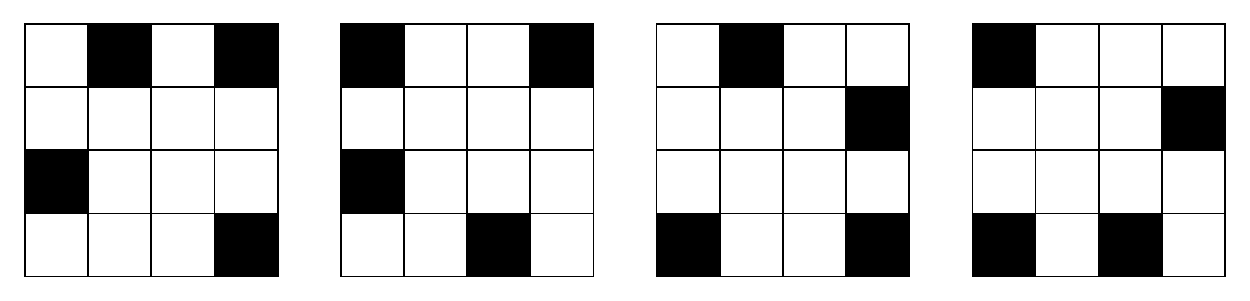}
\caption{Two square tilings: the $4$ (over 1820) domains  of the form $\cup_{k=1}^4 \Omega_0+v_k$ with $(v_k)$ not satisfying condition (A2) when $r=1$ and $R=5$.
(All domains of the form $\cup_{k=1}^4 \Omega_0+v_k$ satisfy condition $(A2)$ when $r=1$ and $R=4$.)
\label{r5}}
 \end{figure}

\begin{remark}\label{r41}
The result is not true in the limiting cases $r=0$ and  $r=R$ because then many lattice points collide.
\end{remark}

\section{Semi-regular bidimensional tilings}
\subsection{Elongated triangular tiling}\.\\
\begin{figure}[h!]
\includegraphics[scale=0.3]{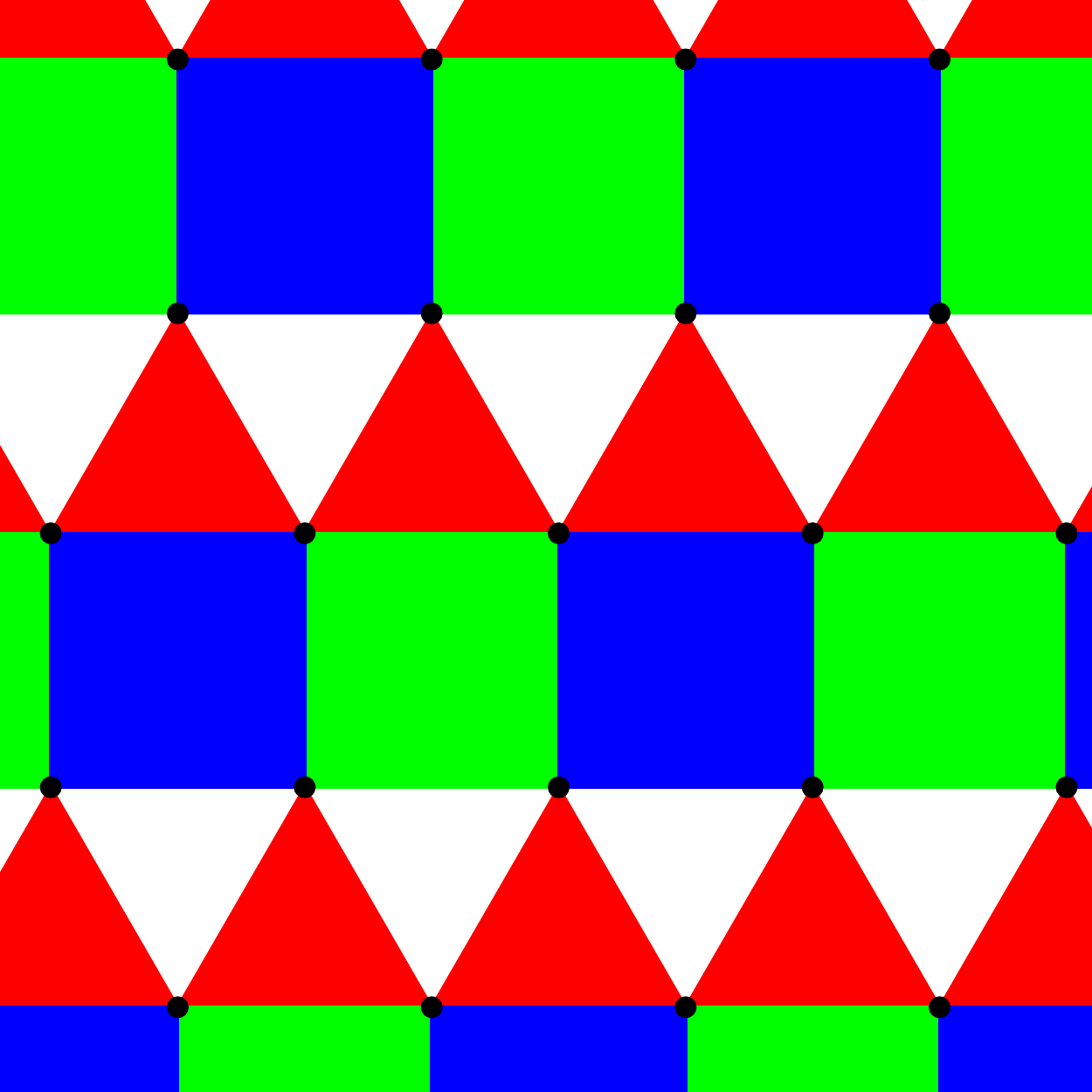}\hskip0.5cm\includegraphics[scale=0.3]{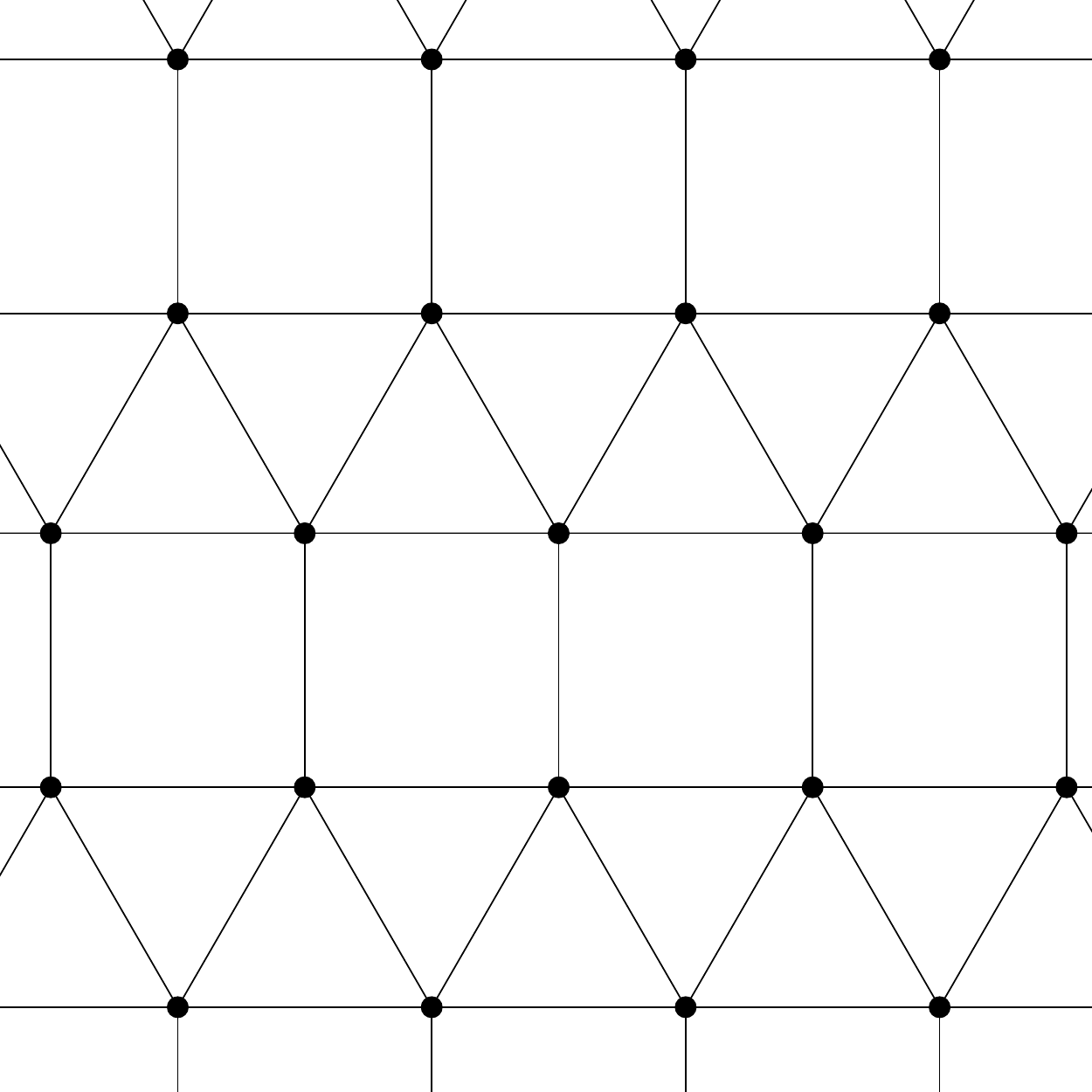}\\
 \includegraphics[scale=0.3]{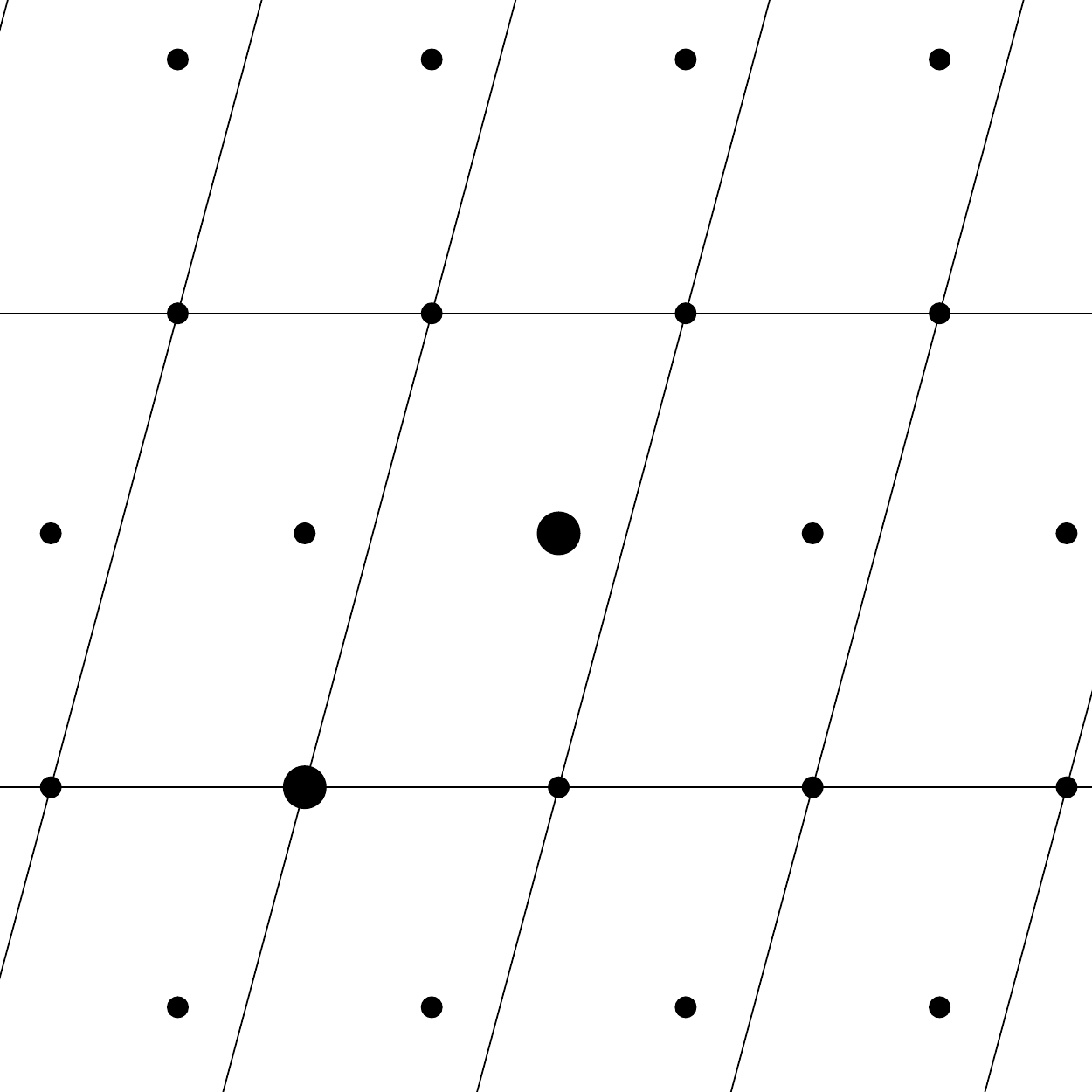}\hskip0.5cm\includegraphics[scale=0.3]{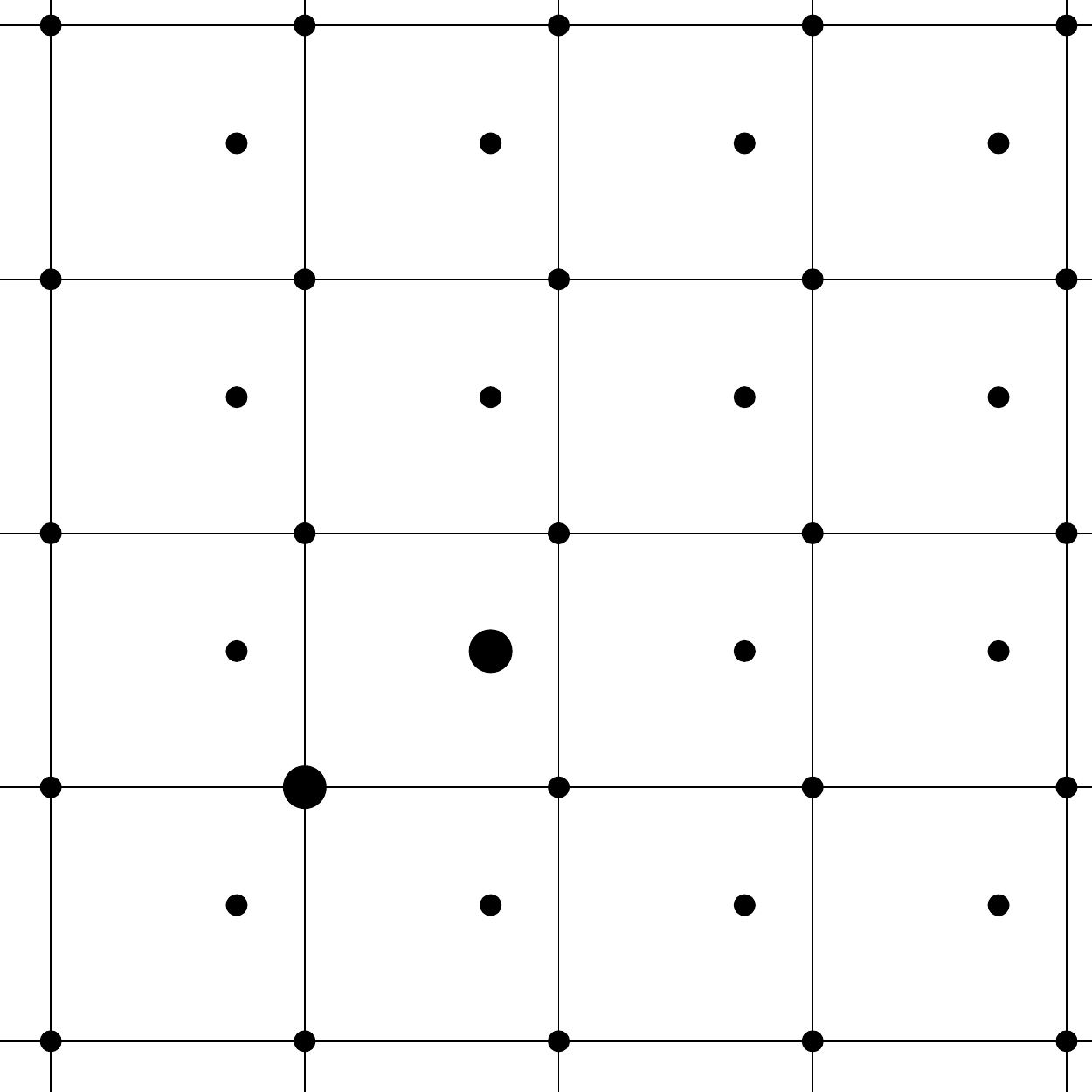}\\
  \caption{Elongated triangular tiling}\end{figure}

  $\Lambda=\cup_{j=1}^2 L^*(u_j+ \ZZ^2)$
 where 
  $$
\begin{array}{l}
u_1=( 0 , 0 )\\
 u_2=(-1+\sqrt{3} , 4-2\sqrt{3})\\
\end{array}\qquad
\text{and}\qquad
L^*=\left(
\begin{array}{cc}
 1 & \frac{1}{2} \\
 0 & 1+\frac{\sqrt{3}}{2} \\
\end{array}
\right).$$
 See Figure \ref{elongated} for the list of domains contained in $[0,2]^2$ (up to translation) and satisfying condition (A2).
\captionsetup[subfig]{\hmargin=0,\hpar=0,\checksingleline=true,}
\begin{figure}[h!]\begin{center}\hskip-0.5cm
\begin{subfigure}{0.27\textwidth}\captionsetup{width=1.2\linewidth,labelsep=newline,parindent=0cm,justification=centering}
  {\hskip0.7cm\centering \includegraphics[width=0.7\linewidth]{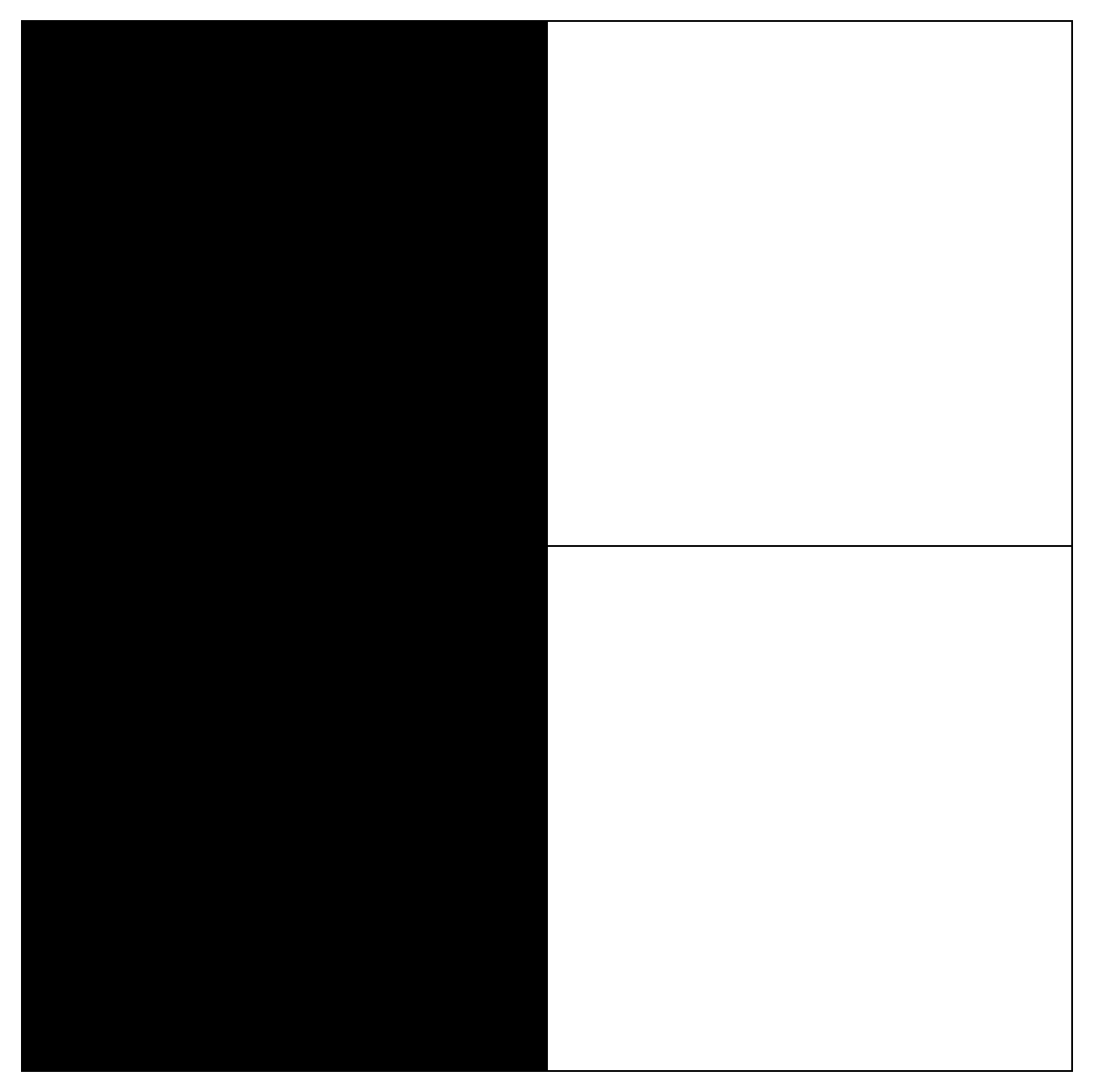}
   \caption{\small$c_1=1.77$; $c_2=2.22$}}
  \end{subfigure}
 \begin{subfigure}{0.27\textwidth}\captionsetup{width=1.2\linewidth,labelsep=newline,parindent=0cm,justification=centering}
 {\hskip1cm\centering \includegraphics[width=0.7\linewidth]{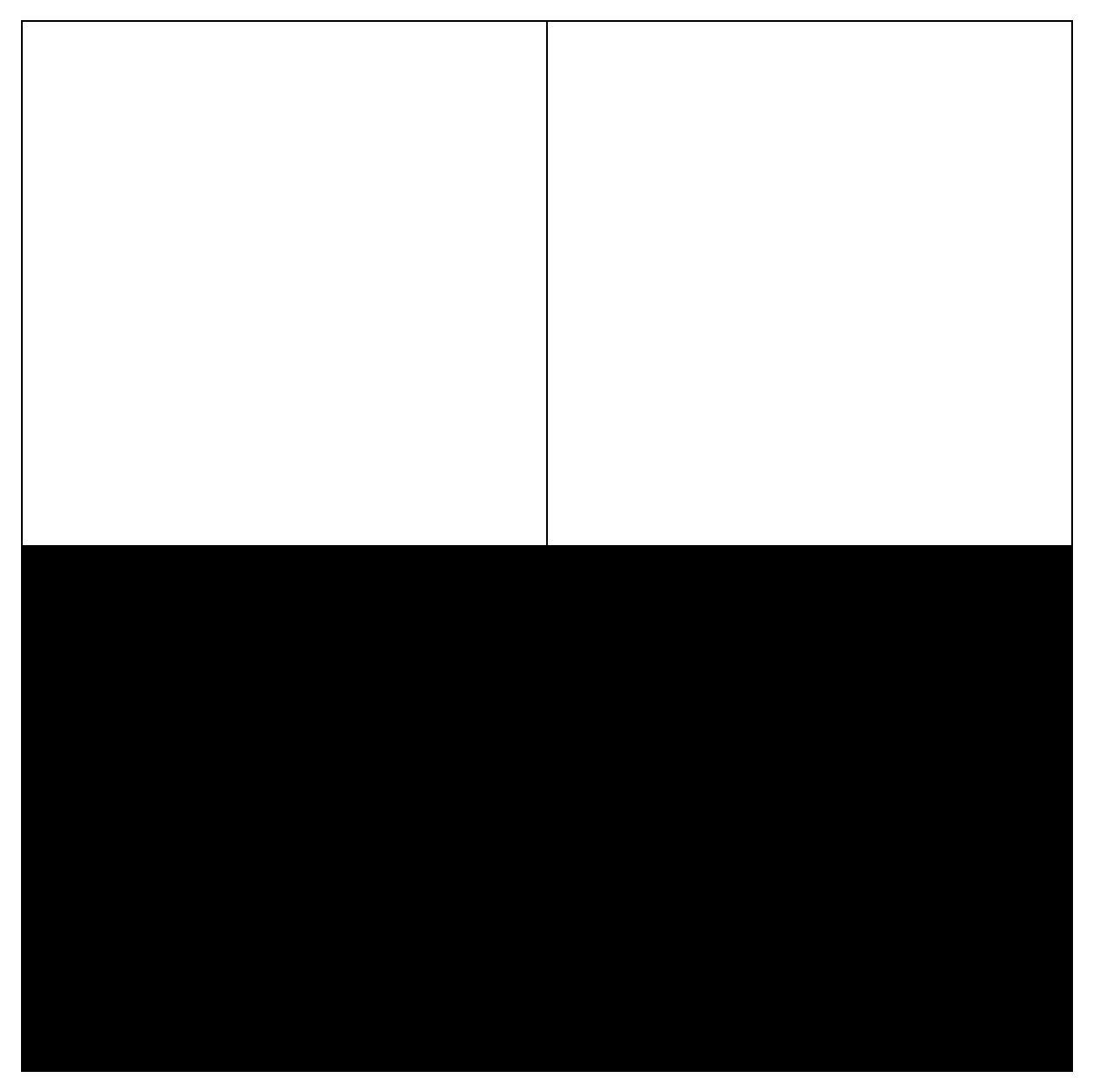}}\hskip0.2cm
 \caption{$c_1=0.66$; $c_2=3.33$}
\end{subfigure} 
\begin{subfigure}{0.27\textwidth}\captionsetup{width=1.2\linewidth,labelsep=newline,parindent=0cm,justification=centering}
{\hskip0.7cm\centering \includegraphics[width=0.7\linewidth]{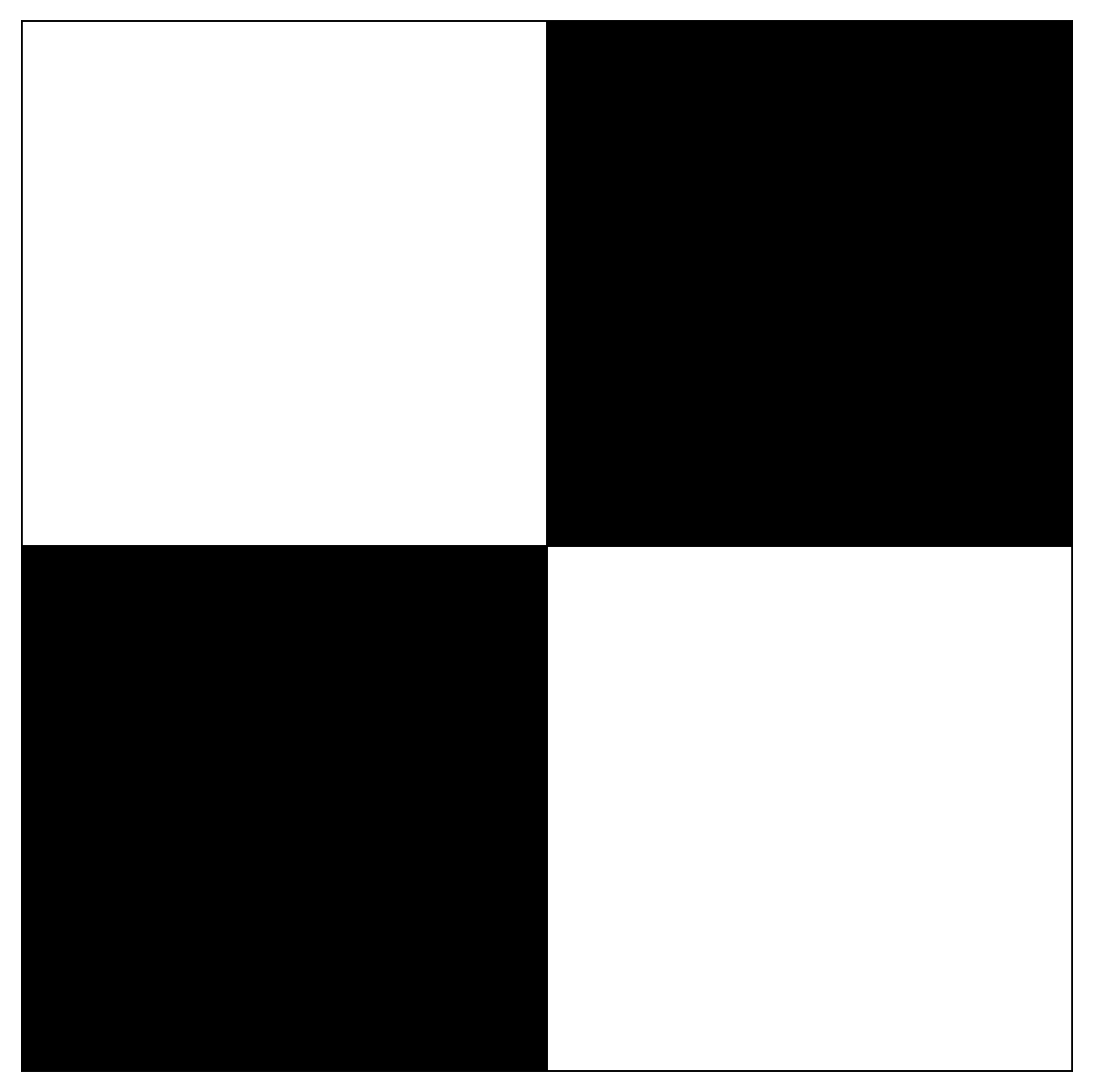}}
\caption{$c_1=0.36$; $c_2=3.63$}
\end{subfigure}
\end{center}
\caption{\label{elongated} Domains contained in $[0,2]^2$ (up to translations) satisfying condition (A2) and related constants $c_1$ and $c_2$, sorted by the increasing value of the ratio $c_2/c_1$}
\end{figure}
 
\subsection{Trihexagonal tiling}\.\\
\begin{figure}[h!]
\includegraphics[scale=0.3]{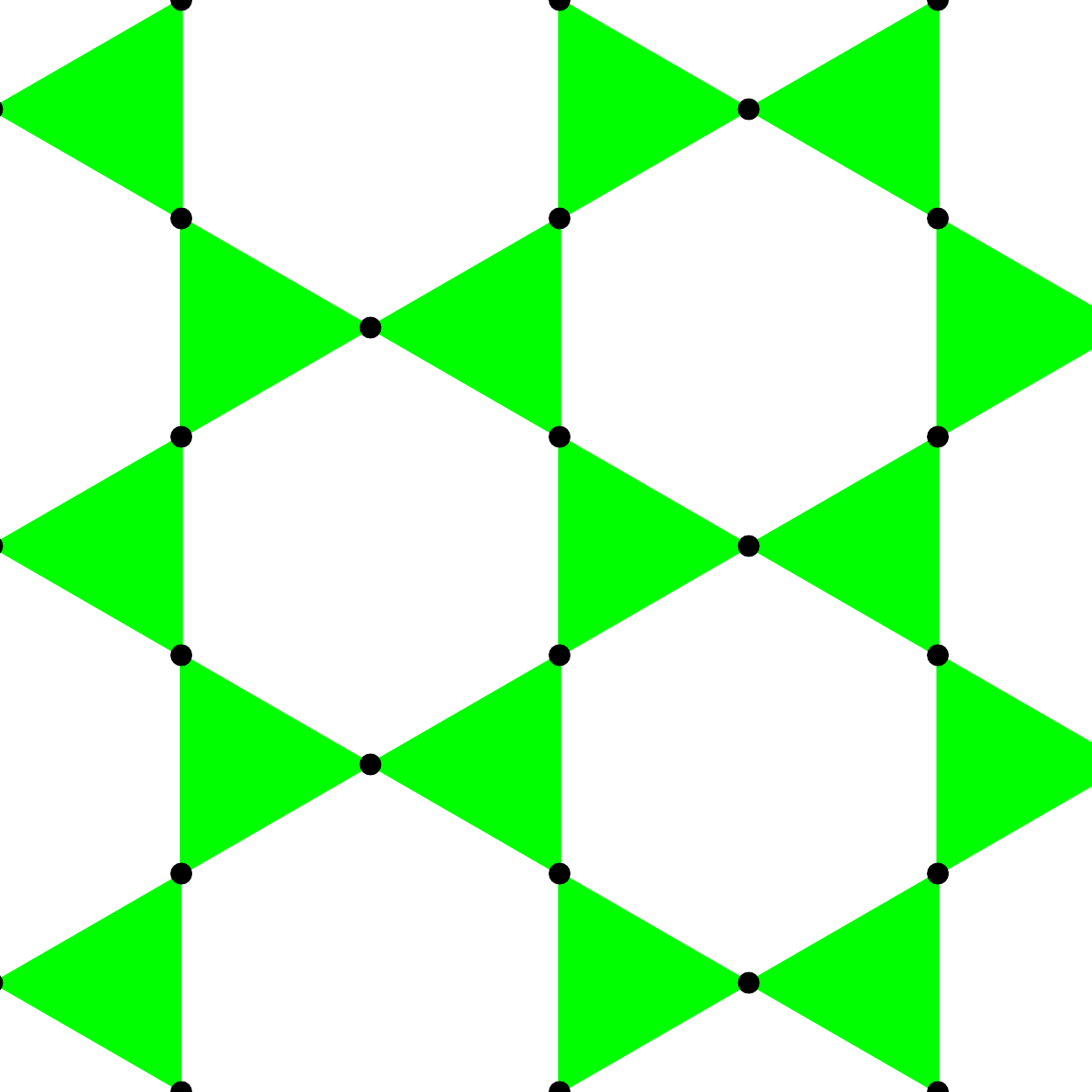}\hskip0.5cm\includegraphics[scale=0.3]{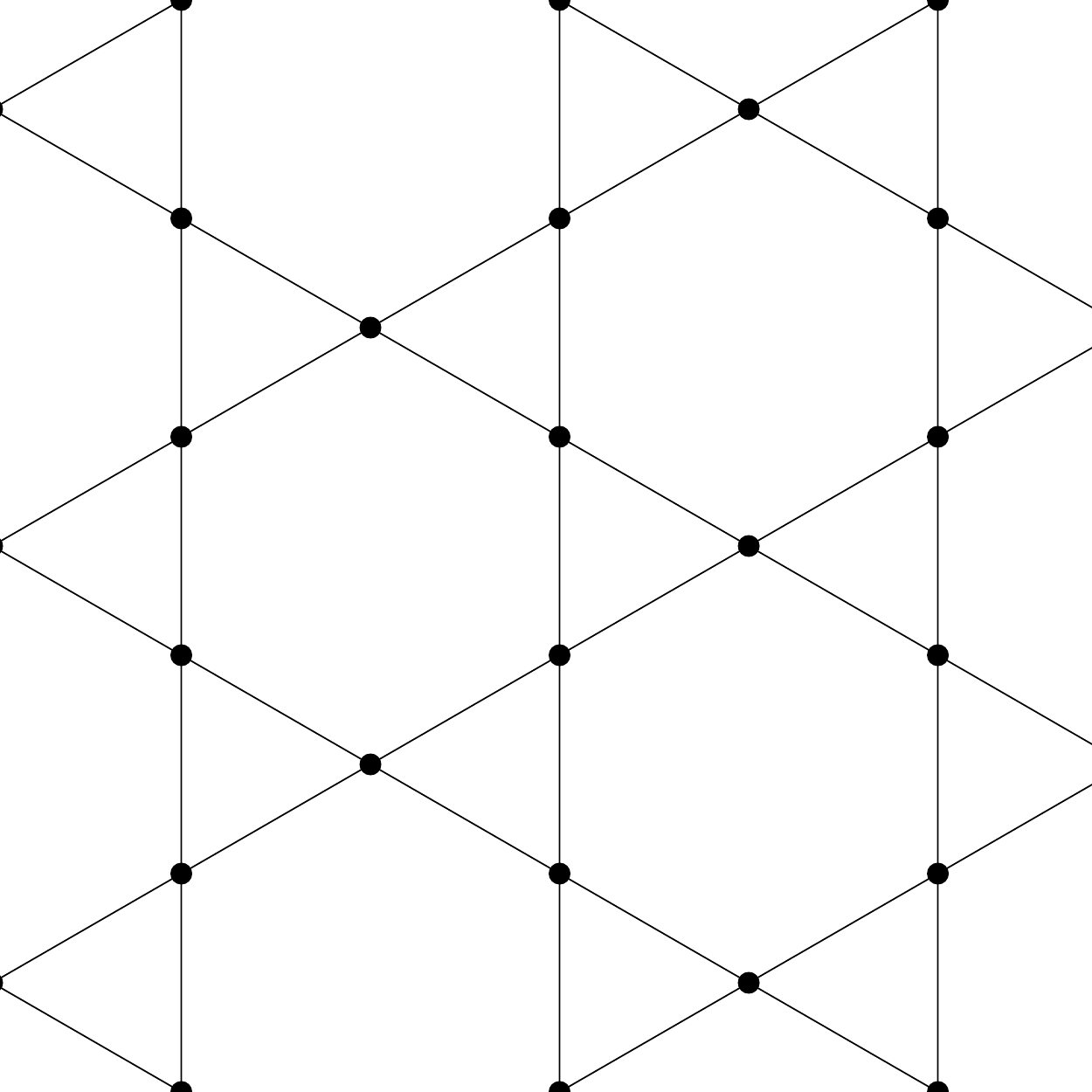}\\\vskip0.5cm
 \includegraphics[scale=0.3]{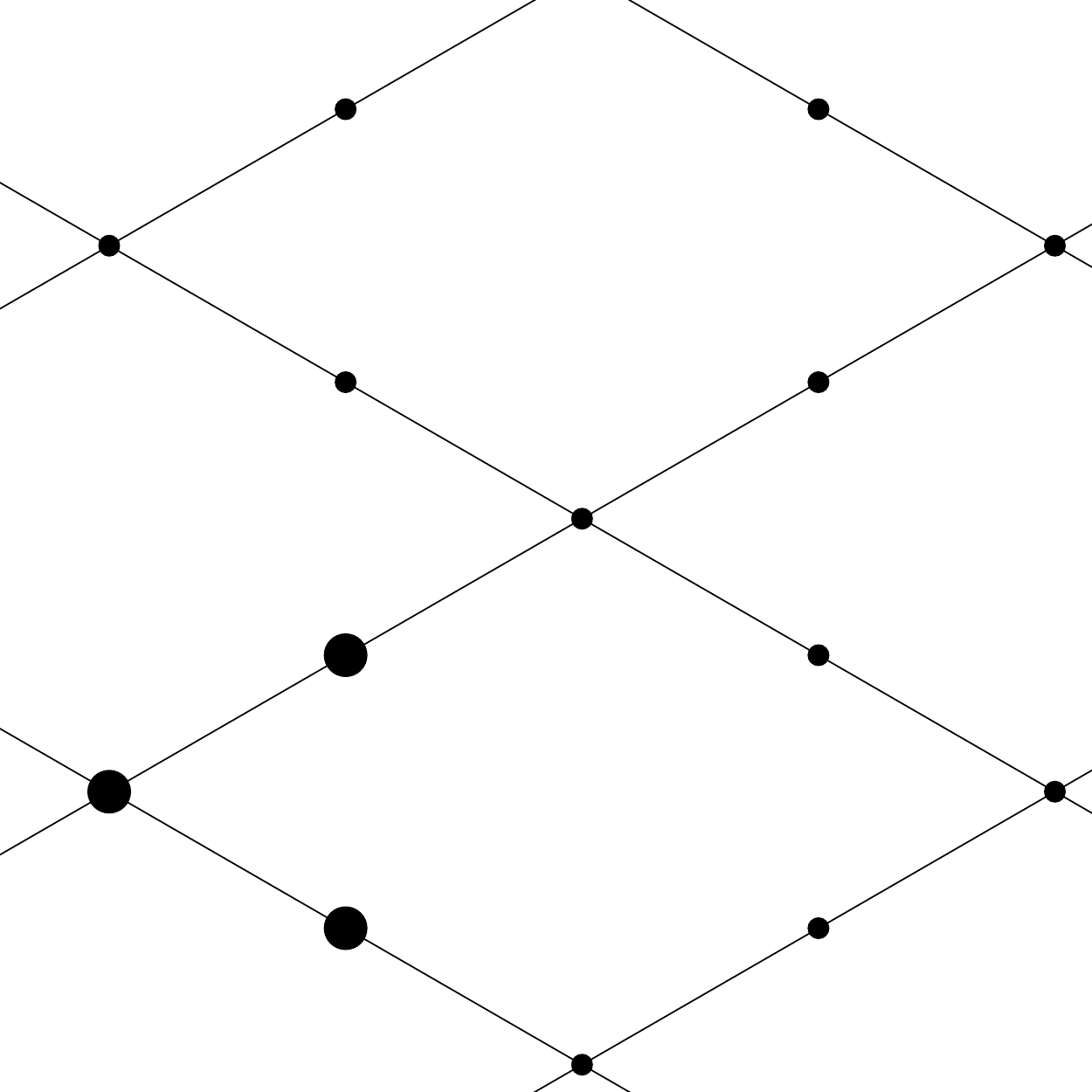}\hskip0.5cm\includegraphics[scale=0.3]{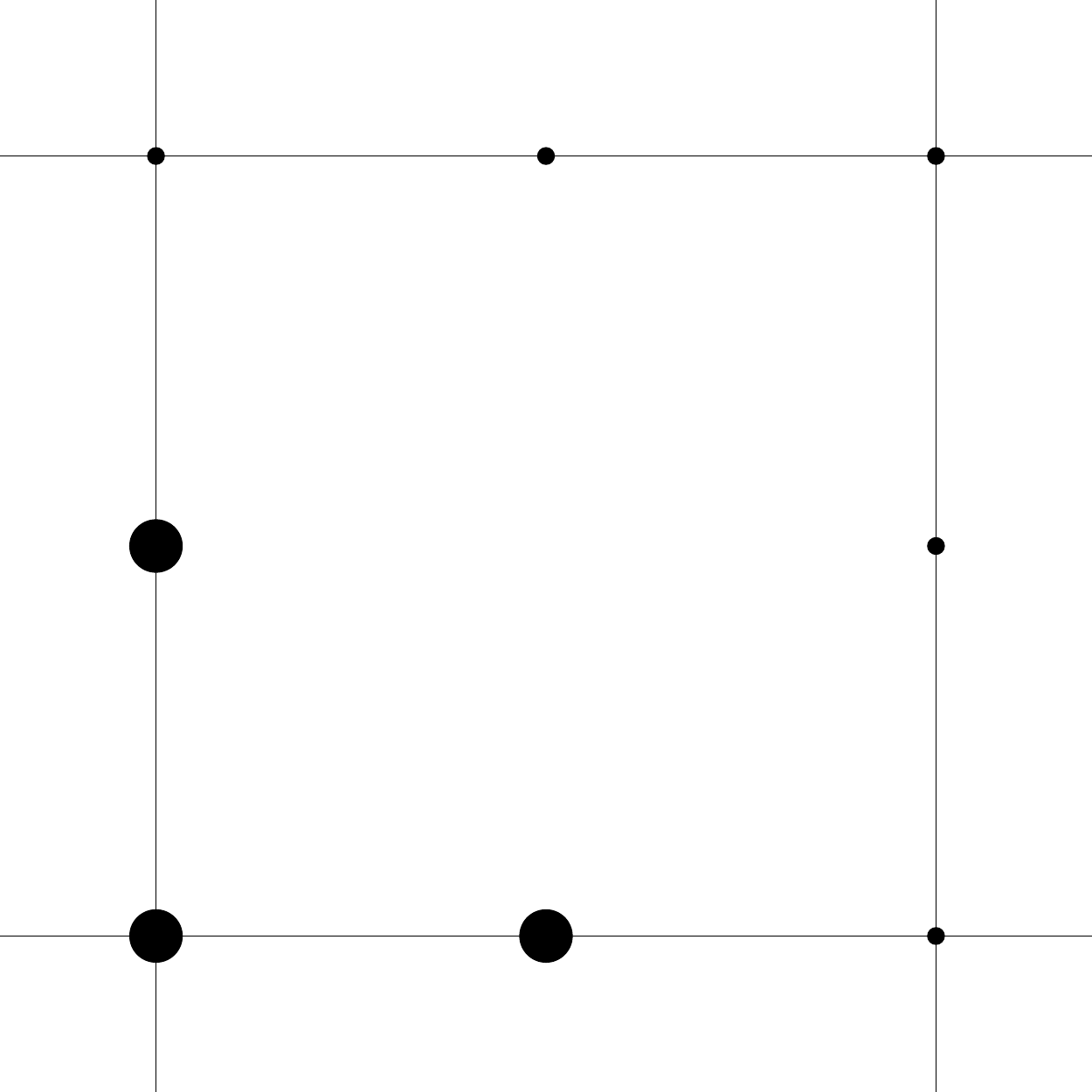}\\
 \caption{Trihexagonal tiling}\end{figure}

$\Lambda=\cup_{j=1}^3 L^*(u_j+  \ZZ^2)$
 where
 $$
\begin{array}{l}
u_1=( 0,  0 )\\
 u_2=( 0, \frac{1}{2}) \\
 u_3=( \frac{1}{2},0) \\
\end{array}
\qquad\text{and}\qquad L^*=\left(
\begin{array}{cc}
 \sqrt{3} & \sqrt{3} \\
 1 & -1 \\
\end{array}
\right).$$
See Figure \ref{good_domains_4} for the list of connected domains of the form $\cup_{k=1}^4 \Omega_0+v_k$ with $(v_k)$ satisfying condition (A2). We extended our investigation of condition (A2) to the set of domains
$\cup_{k=1}^4 \Omega_0+v_k$ with $(v_k)\in \ZZ^2\cap[0,2]^2$, see Figure \ref{general_good_domains_4}. 
By a direct computation, 36 over the 84 domains of this form satisfy condition (A2) and the associated constants $c_1$ and $c_2$ are 
constantly equal to $1$ and $4$, respectively.

\begin{figure}[h]
 \includegraphics[scale=1]{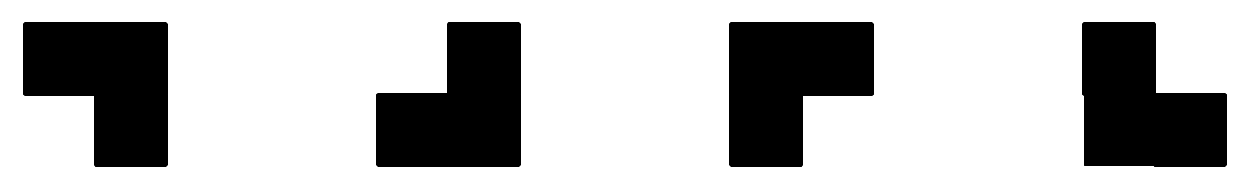}
 \caption{Trihexagonal tiling: the connected domains of the form $\cup_{k=1}^4 \Omega_0+v_k$ with $(v_k)$ satisfying condition (A2).\label{good_domains_4}}\newpage
\end{figure}

\begin{figure}[h]
 \includegraphics[scale=1]{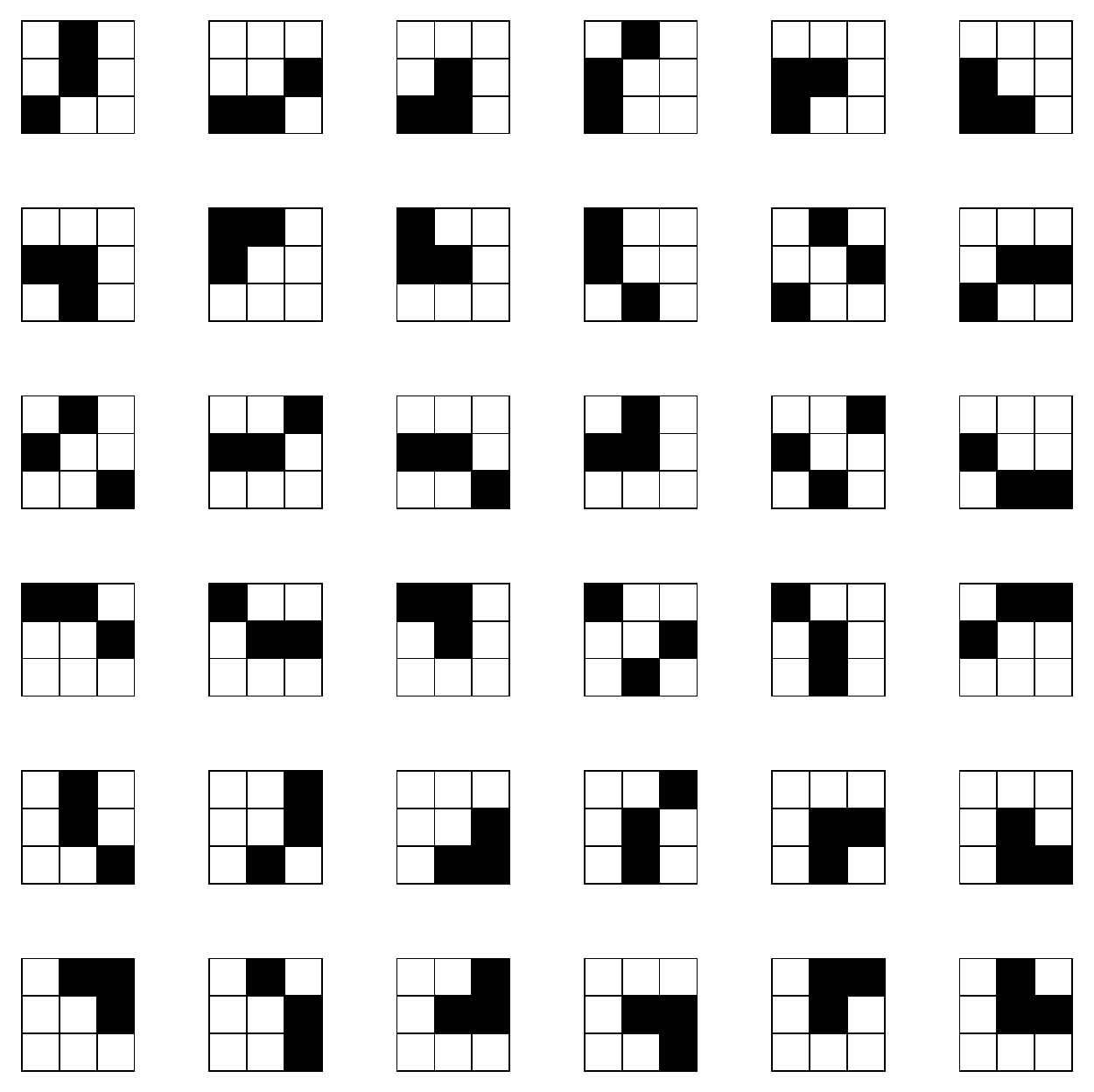}
 \caption{Trihexagonal tiling: the 36 (over 84) domains of the form $\cup_{k=1}^4 \Omega_0+v_k$ with $(v_k)\in \ZZ^2\cap[0,3]^2$ satisfying condition (A2).\label{general_good_domains_4}}\newpage
\end{figure}
\clearpage

\subsection{Snub square tiling}\.\\

\begin{figure}[h!]
 \includegraphics[scale=0.3]{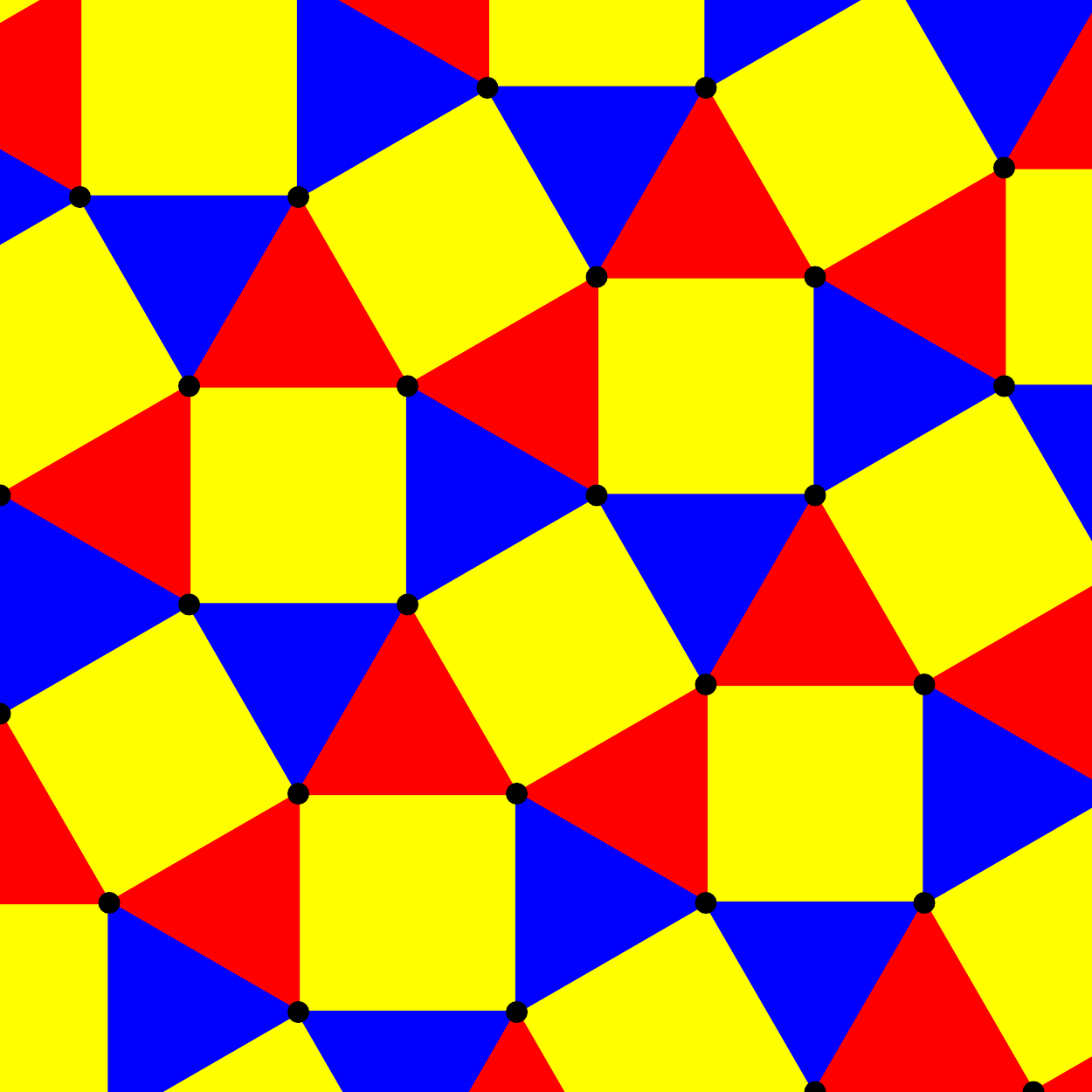}\hskip0.5cm\includegraphics[scale=0.3]{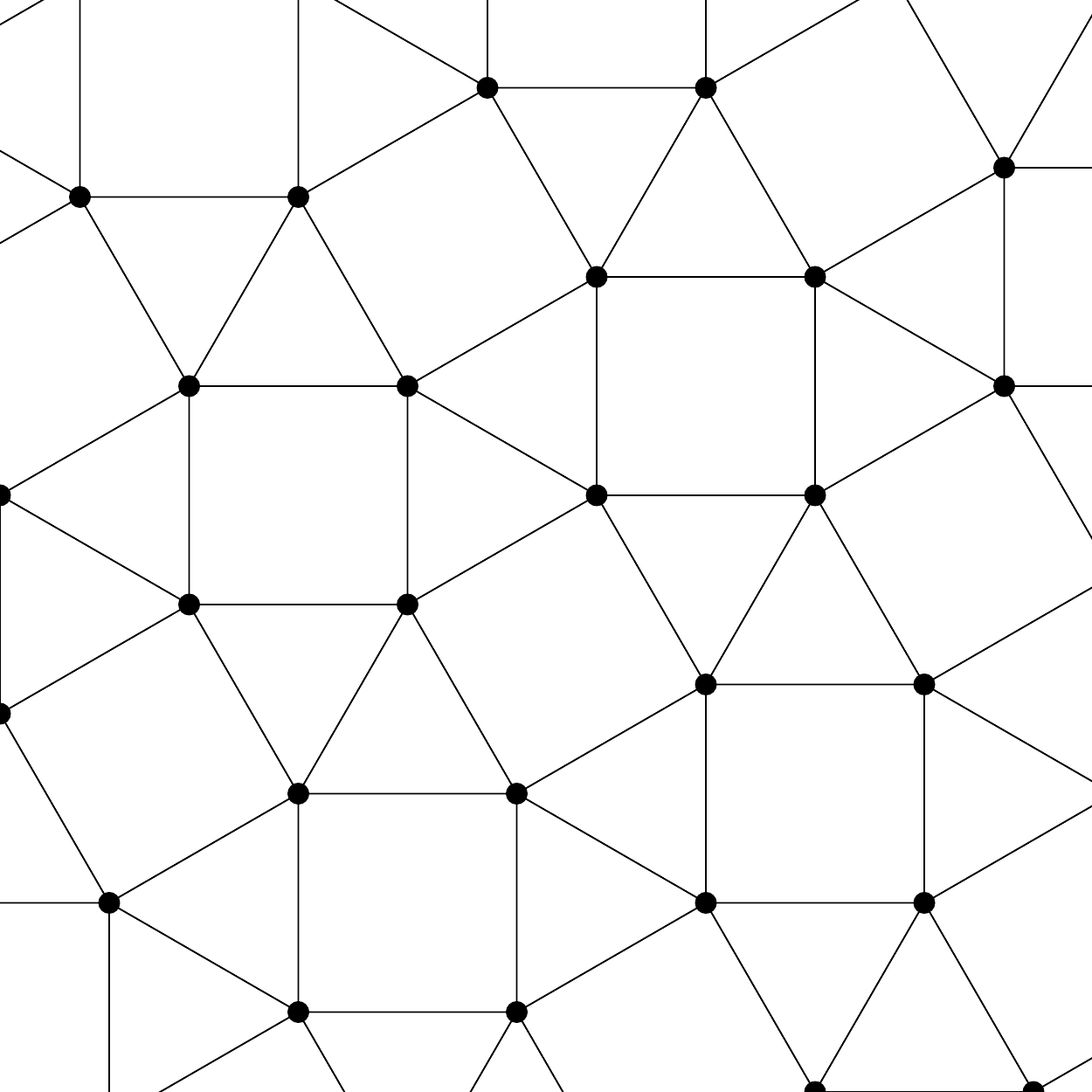}\\\vskip0.5cm
 \includegraphics[scale=0.3]{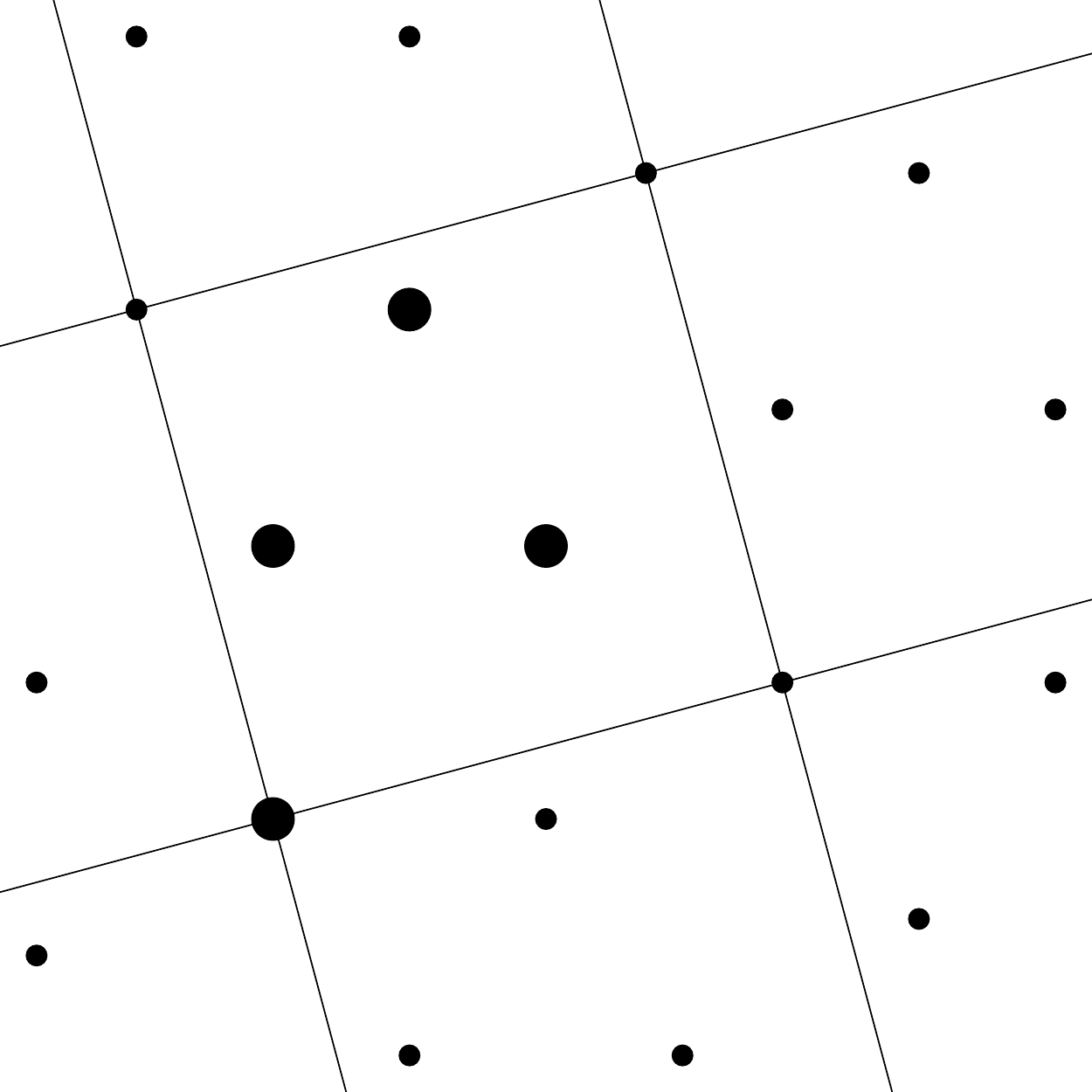}\hskip0.5cm\includegraphics[scale=0.3]{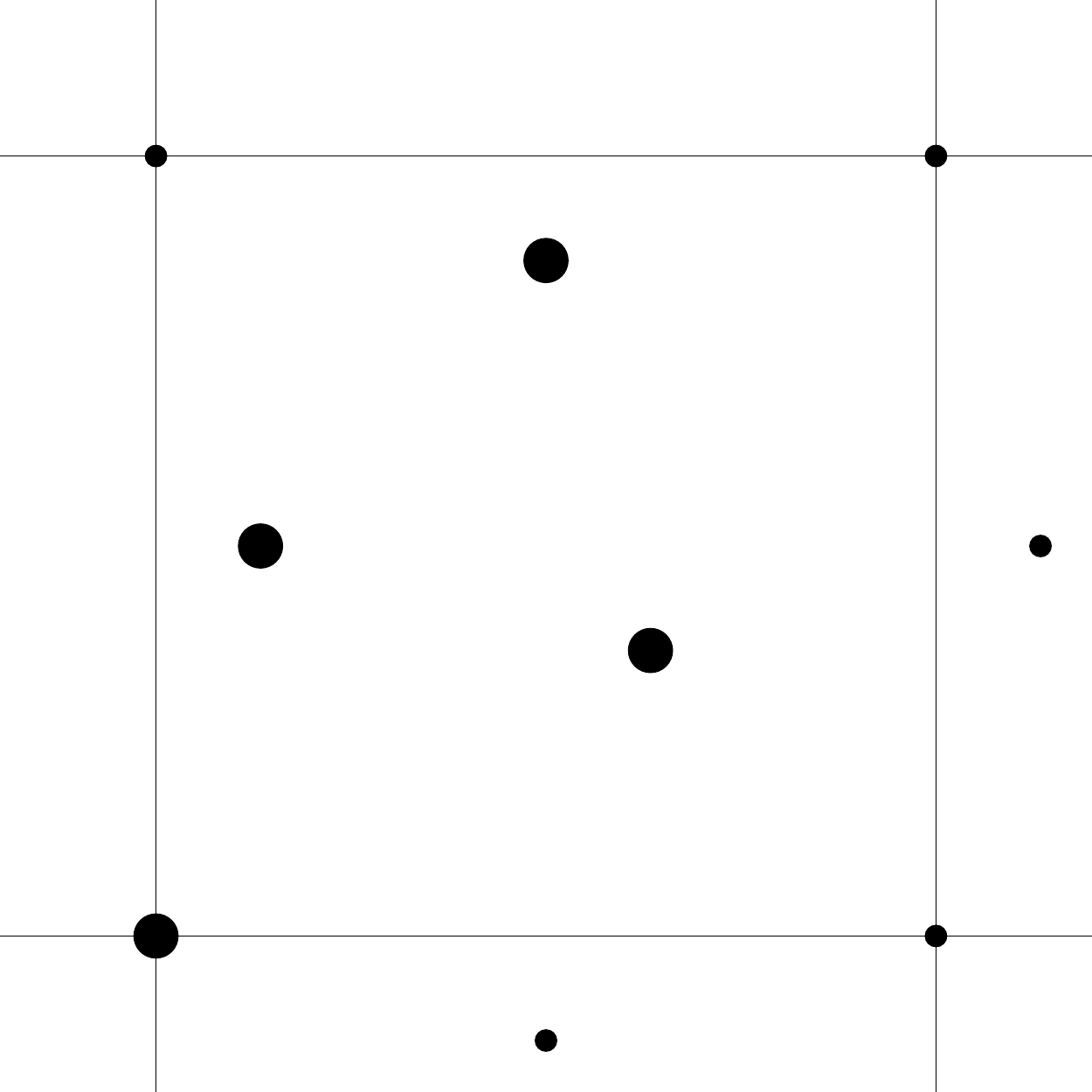}\\
\caption{Snub square tiling}\end{figure}

$\Lambda=\cup_{j=1}^4 L^*(u_j+  \ZZ^2)$
 where
  $$
\begin{array}{ll}
u_1=( 0, 0) &
 u_2=(1-\frac{\sqrt{3}}{2},\frac 1 2) \\
 u_3=(\frac{1}{2}(3-\sqrt{3}),\frac{1}{2}(-1+\sqrt{3})) &
 u_4=(\frac{1}{2}, \frac{\sqrt{3}}{2}) \\
\end{array}$$
and$$
L^*=\left(
\begin{array}{cc}
 1+\frac{\sqrt{3}}{2} & -\frac{1}{2} \\
 \frac{1}{2} & 1+\frac{\sqrt{3}}{2} \\
\end{array}
\right).$$

By a direct computation, for every $\{v_1,\dots,v_4\}$ such that $\cup_{k=1}^4 v_k+\Omega_0$ is a connected set, the condition (A2) is satisfied -- see Figure \ref{gooddomains_snub}.

\begin{figure}[h!]\captionsetup[subfig]{\hmargin=0,\hpar=0,\checksingleline=true,width=\linewidth} \vskip0.7cm
\begin{tabular}{lllll}
\begin{subfigure}{0.1\textwidth}
\begin{center}\includegraphics[width=1.2\linewidth]{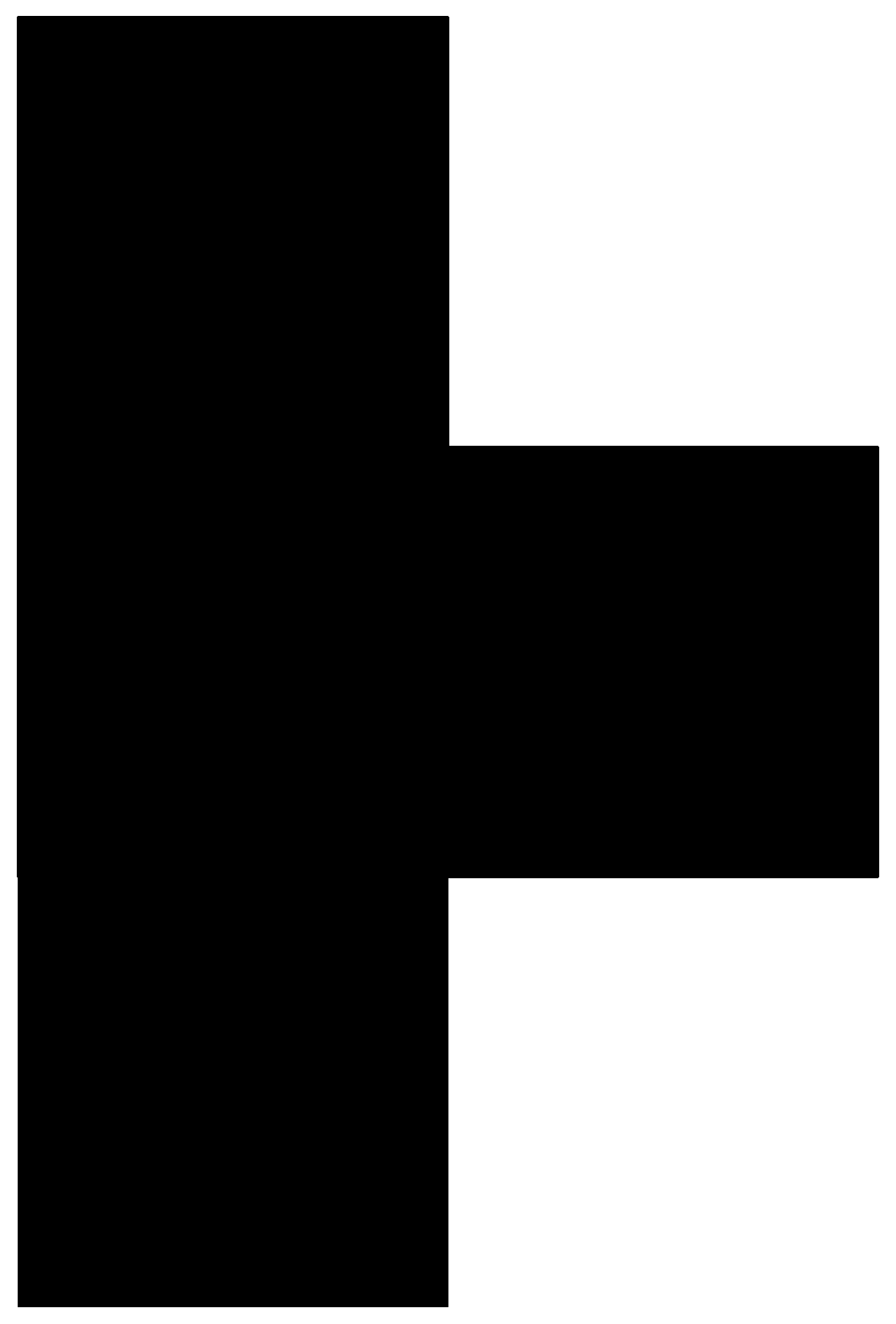}
  \captionsetup{width=1.2\linewidth,labelsep=newline,parindent=0cm,justification=centering}
    \caption{\small$c_1=1.03$ $c_2=6.66$}\end{center}
   \end{subfigure}\hskip1cm&
\begin{subfigure}{0.1\textwidth}
\begin{center}\includegraphics[width=1.2\linewidth]{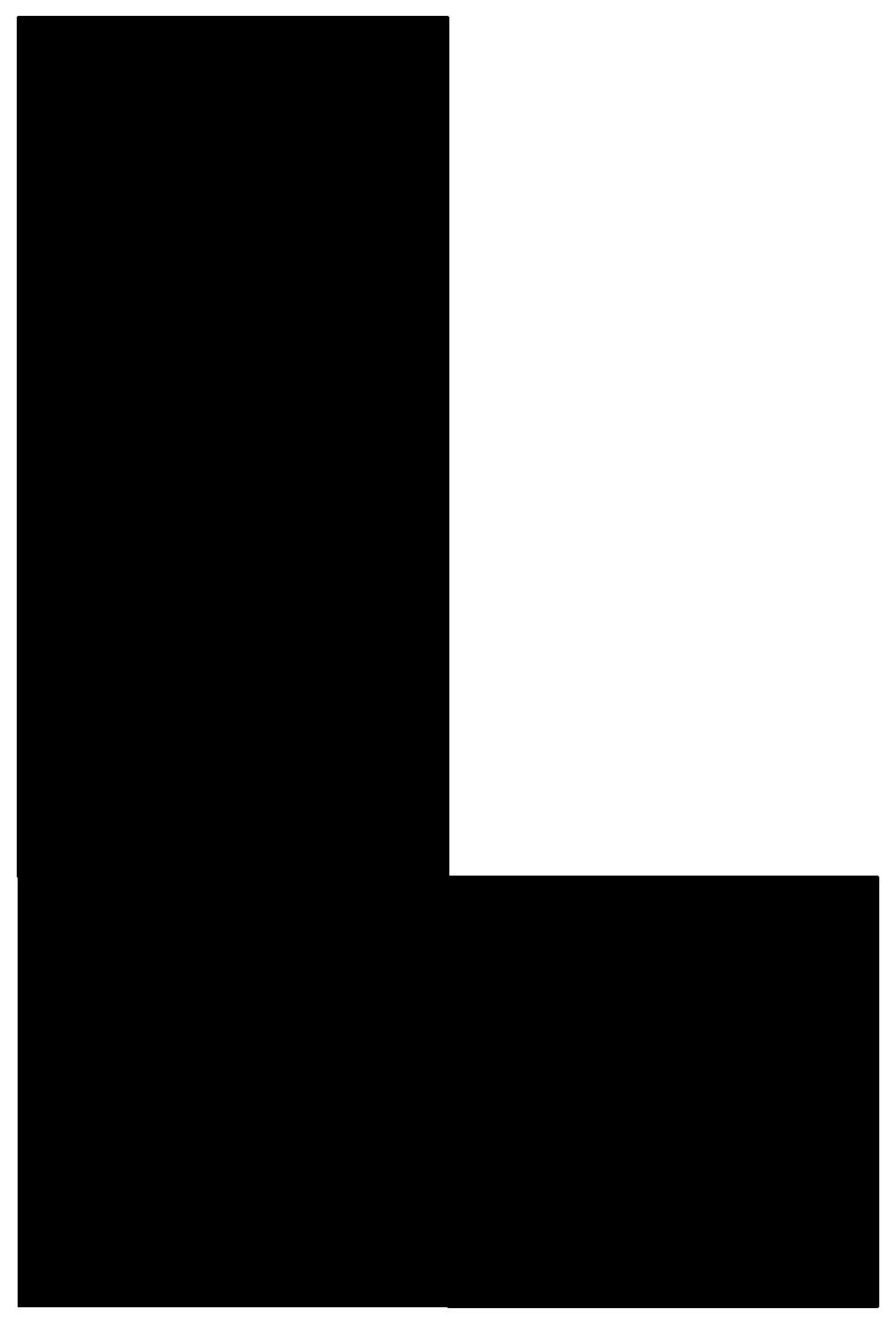}
  \captionsetup{width=1.2\linewidth,labelsep=newline,parindent=0cm,justification=centering}
    \caption{\small$c_1=0.16$ $c_2=7.83$}\end{center}
  \end{subfigure}\hskip1cm&
\begin{subfigure}{0.1\textwidth}
\vskip-0.7cm\begin{center}\hskip0.1cm\includegraphics[width=0.6\linewidth]{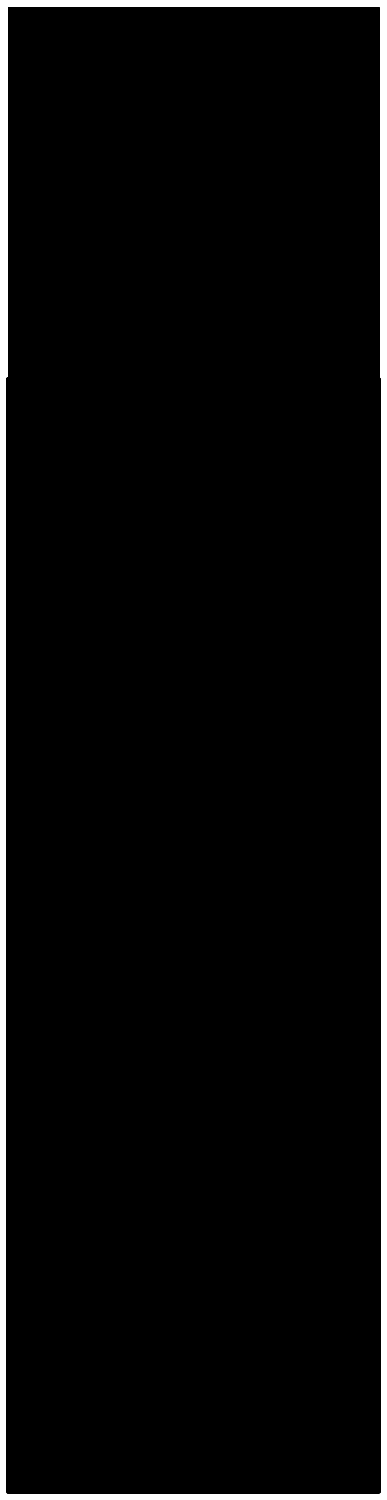}
  \captionsetup{width=1.2\linewidth,labelsep=newline,parindent=0cm,justification=centering}
   \caption{\small$c_1=1.33$ $c_2=6.66$}\end{center}
  \end{subfigure}
  \hskip1cm&
\begin{subfigure}{0.1\textwidth}
\begin{center}\includegraphics[width=1.2\linewidth]{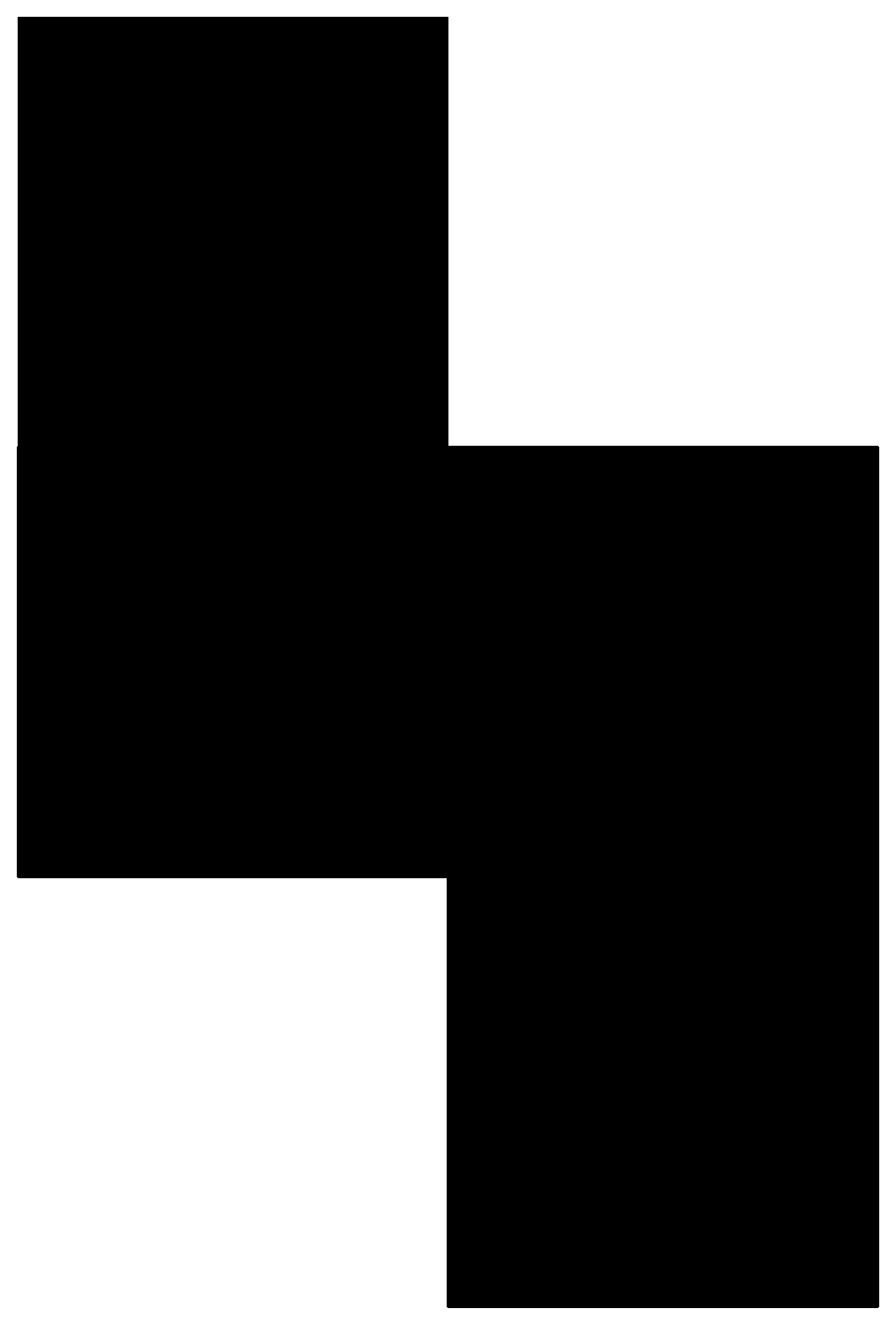}
  \captionsetup{width=1.2\linewidth,labelsep=newline,parindent=0cm,justification=centering}
  \caption{\small$c_1=1.12$ $c_2=6.87$}\end{center}
  \end{subfigure}
  \hskip1cm& \begin{subfigure}{0.1\textwidth}
  \vskip0.7cm
\begin{center}\hskip0.1cm\includegraphics[width=1.2\linewidth]{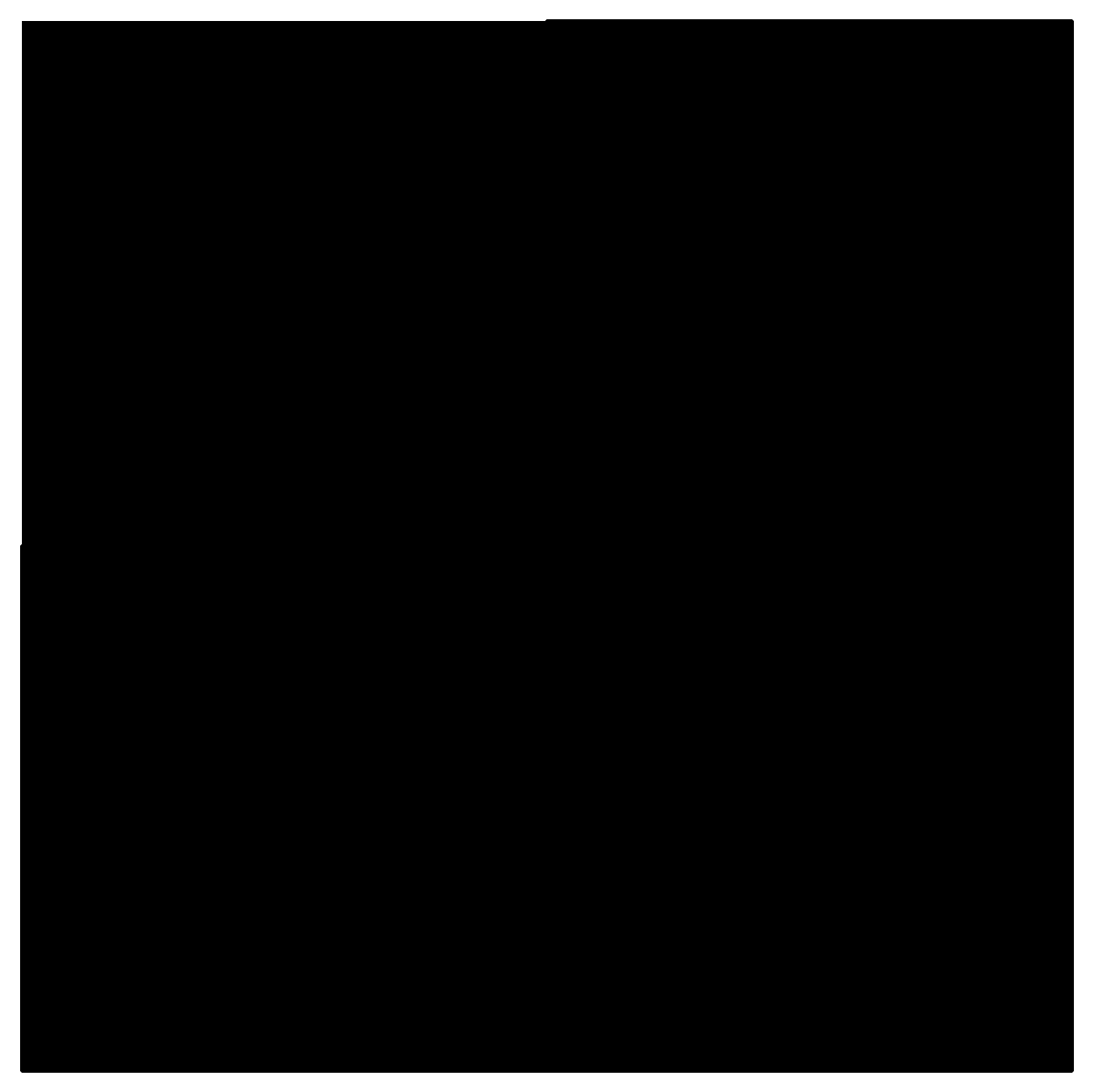}
  \captionsetup{width=1.2\linewidth,labelsep=newline,parindent=0cm,justification=centering}
    \caption{\small$c_1=0.54$ $c_2=2.16$}\end{center}
  \end{subfigure}  
  \end{tabular}
\caption{\label{gooddomains_snub}Snub square tiling: the five representatives of the 19 connected domains satisfying (A2), and related constants. 
In this case, constants $c_1$ and $c_2$ are invariant with respect to reflections and rotations. The best ratio $c_2/c_1=6.12$ is achieved by the configuration (D).}
    \end{figure}

 \begin{figure}[h!]
 \includegraphics[scale=0.5]{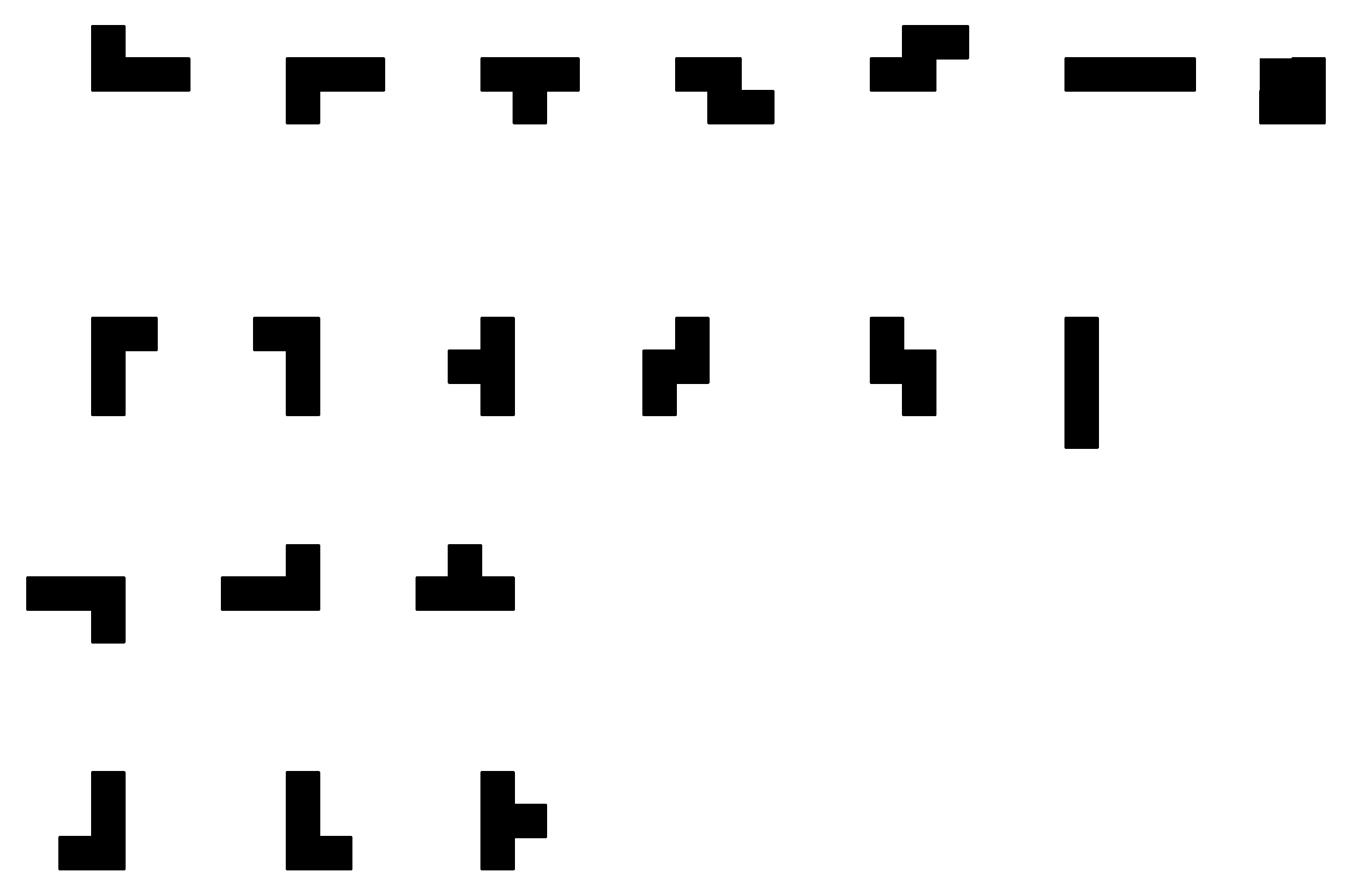}
 \caption{Fixed tetrominoes, i.e., the possible connected domains when $M=4$ \label{tetrominoes}.}
\end{figure}
\vskip5cm
\begin{figure}[h!]
 \includegraphics[scale=1]{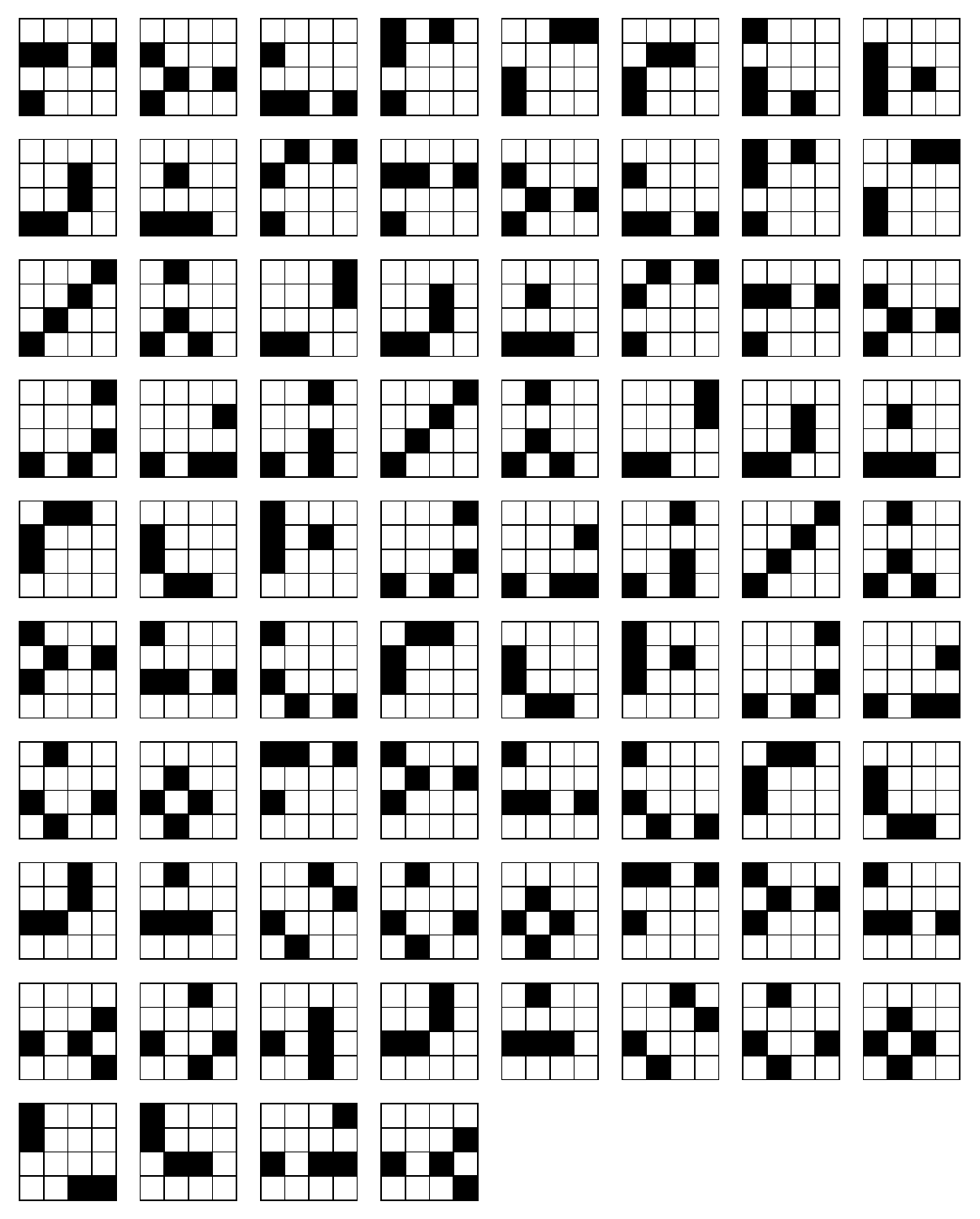}
 \caption{Snub square tiling: the 76 (over 1820) domains of the form $\cup_{k=1}^4 \Omega_0+v_k$ not satisfying condition (A2).\label{baddomains_3}}
\end{figure}
\clearpage
\subsection{Truncated square tiling}\.\\
\begin{figure}[h!]
 \includegraphics[scale=0.3]{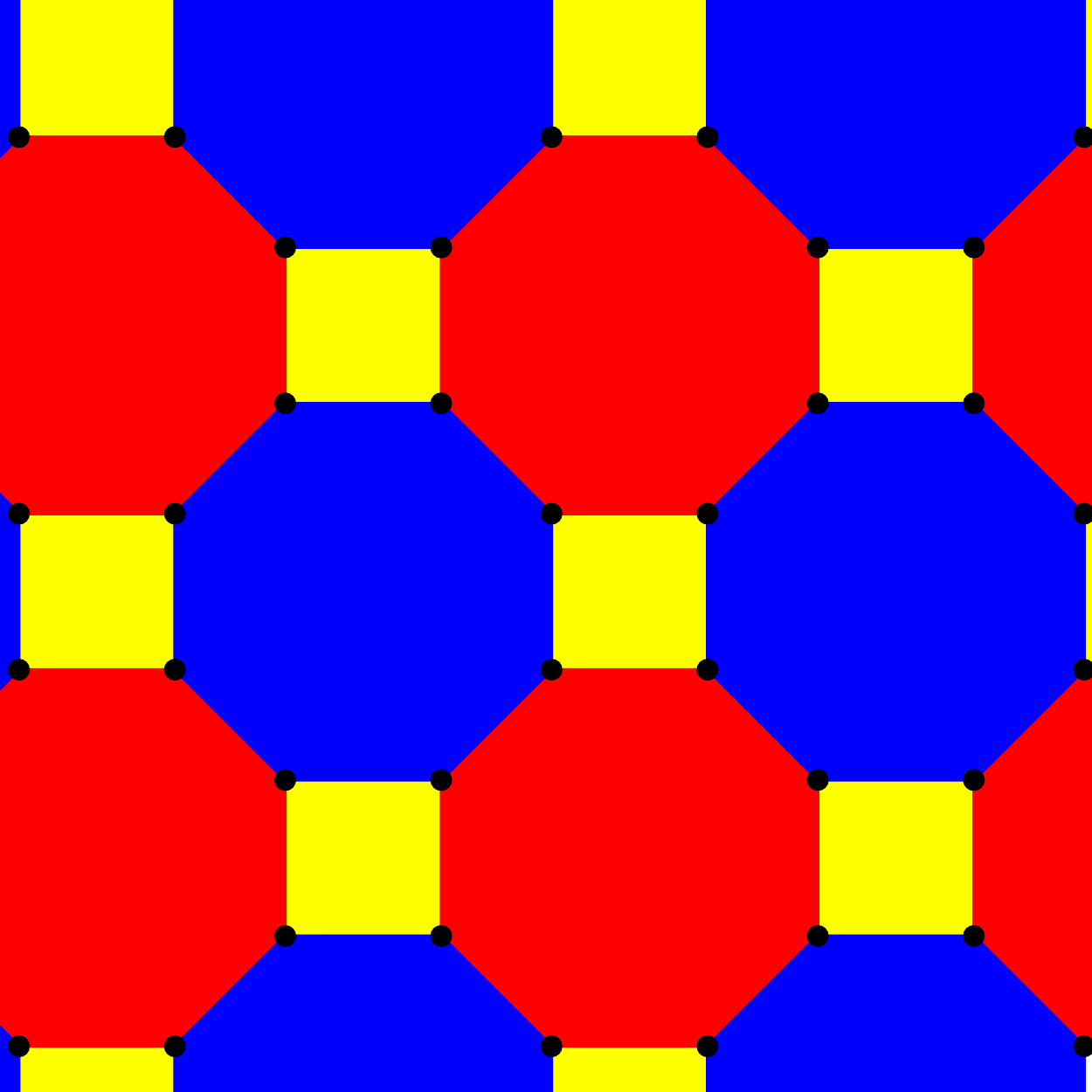}\hskip0.5cm\includegraphics[scale=0.3]{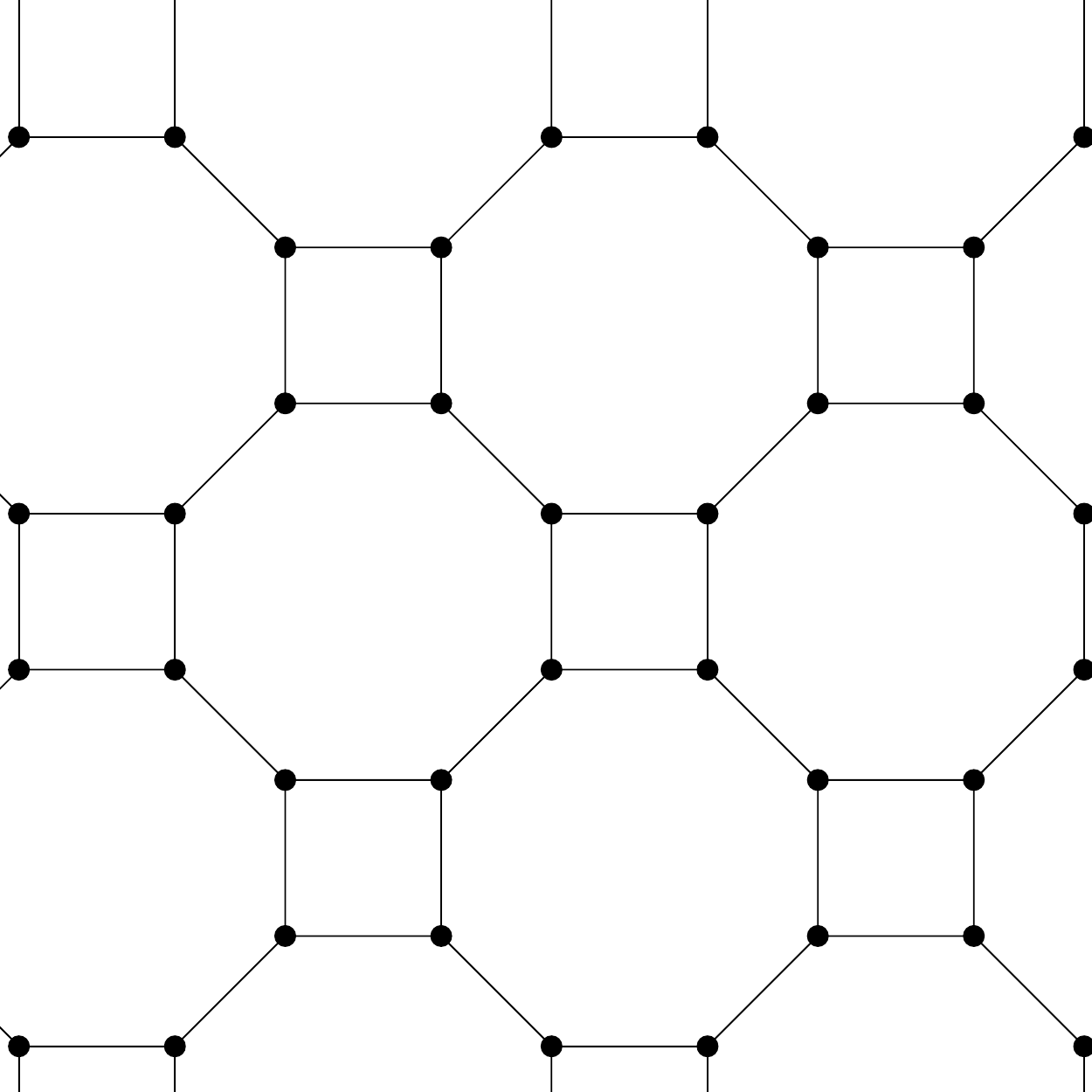}\\\vskip0.5cm
 \includegraphics[scale=0.3]{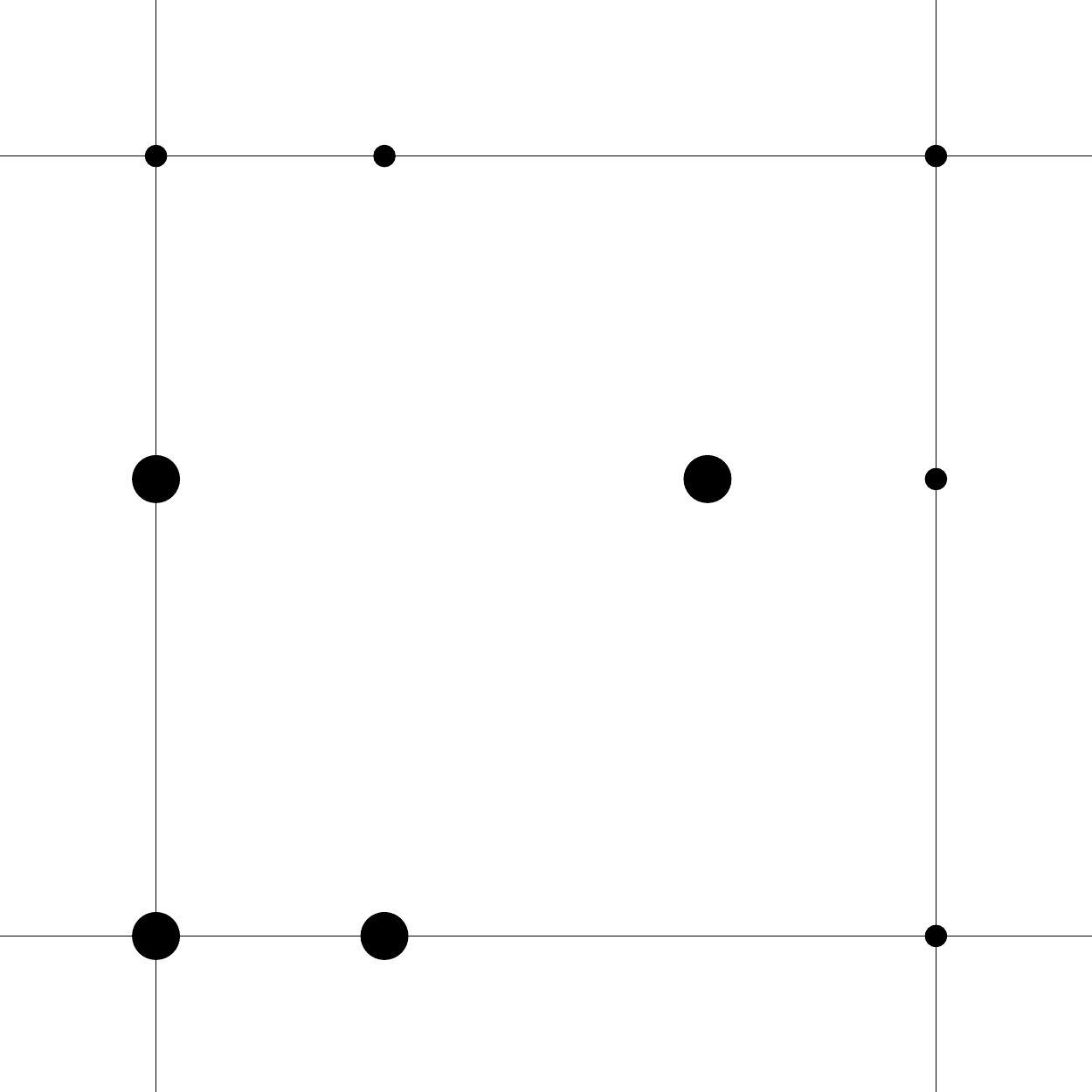}\hskip0.5cm\includegraphics[scale=0.3]{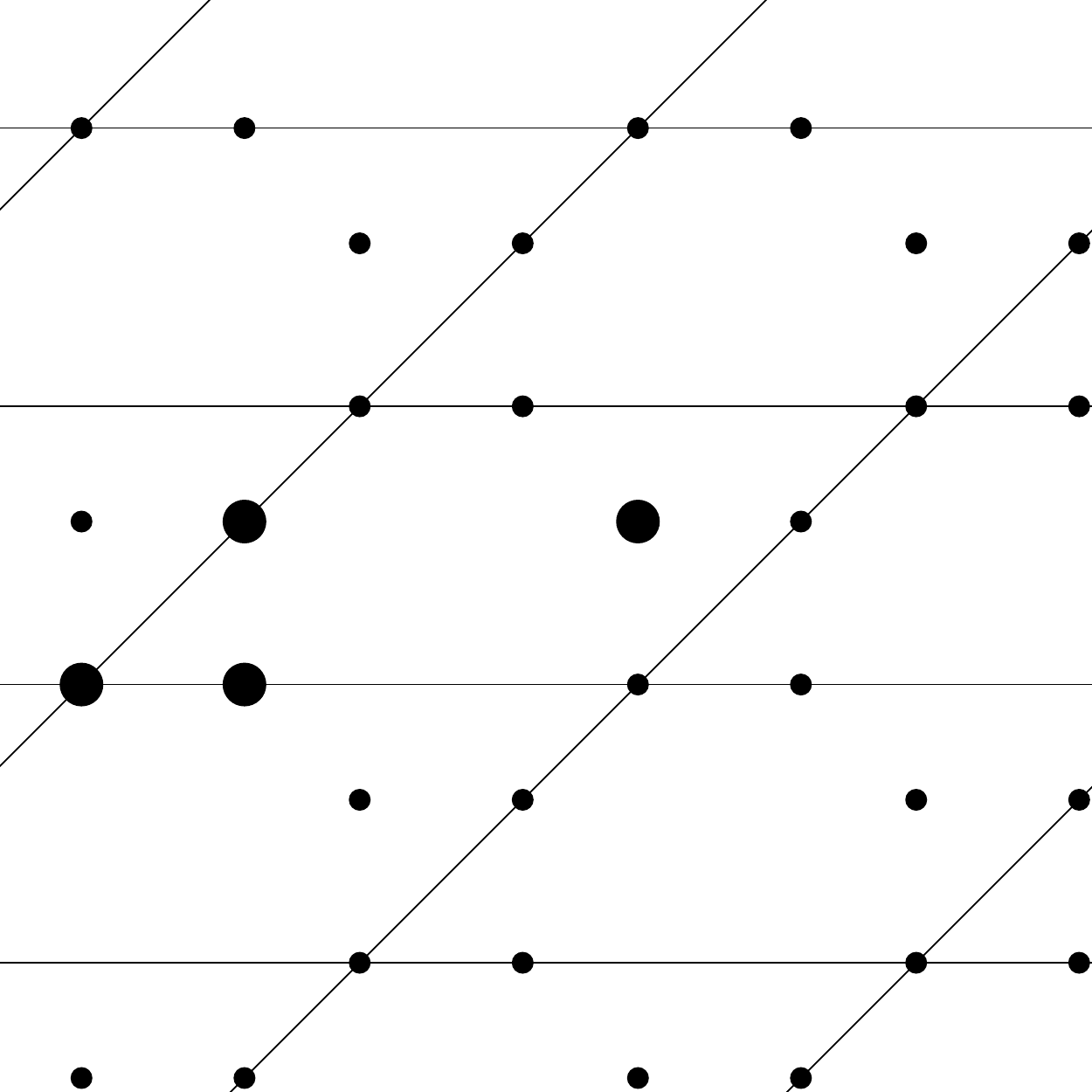}\\
 \caption{Truncated square tiling}\end{figure}
 \\
$\Lambda=\cup_{j=1}^4  L^*( u_j+ \ZZ^2)$
 where
  $$
\begin{array}{l}
u_1=( 0 , 0 )\\
 u_2=(1-\frac{\sqrt{2}}{2} , 0  )\\
 u_3=(0 , 2-\sqrt{2}  )\\
 u_4=(\frac{\sqrt{2}}{2} , 2-\sqrt{2}  )\\
\end{array}
$$
and 
$$L^*=\left(
\begin{array}{cc}
 2+\sqrt{2} & 1+\frac{\sqrt{2}}{2} \\
 0 & 1+\frac{\sqrt{2}}{2} \\
\end{array}
\right).$$
By a direct computation, the set of connected domains of the form $\cup_{k=1}^4 \Omega_0+v_k$ with $(v_k)$ satisfying condition (A2) contains 9 elements, depicted in Figure \ref{gooddomains_5}. See also 
Figure \ref{tetrominoes} for the complete list of connected domains (up to translations) and Figure \ref{baddomains_5} for some examples of (possibly disconnected) domains not satisfying condition (A2).
 \begin{figure}[h!]\captionsetup[subfig]{\hmargin=0,\hpar=0,\checksingleline=true,width=\linewidth} 
\begin{tabular}{lllllll}
\begin{subfigure}{0.1\textwidth}
\begin{center}\includegraphics[width=1.2\linewidth]{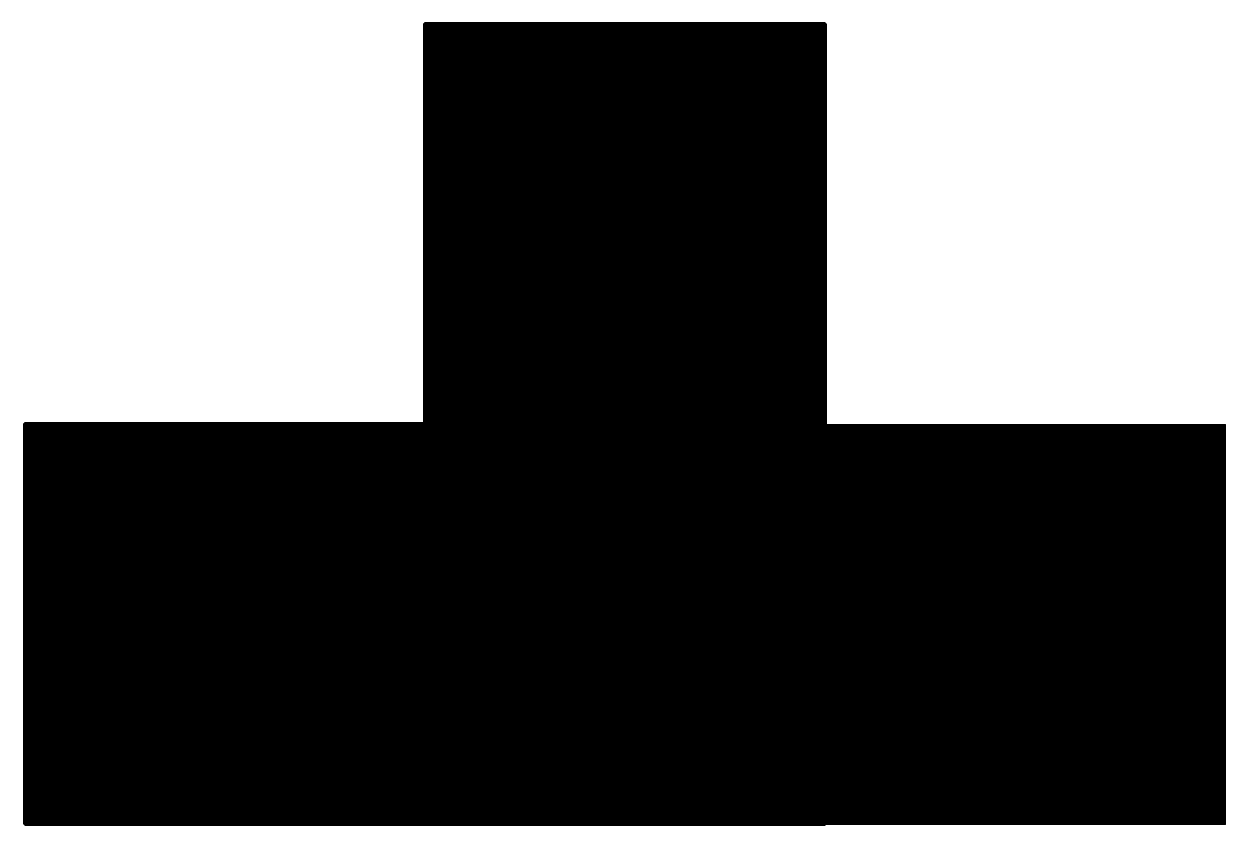}
  \captionsetup{width=1.2\linewidth,labelsep=newline,parindent=0cm,justification=centering}
     \end{center}
  \end{subfigure}\quad&
\begin{subfigure}{0.1\textwidth}
\begin{center}\includegraphics[width=1.2\linewidth]{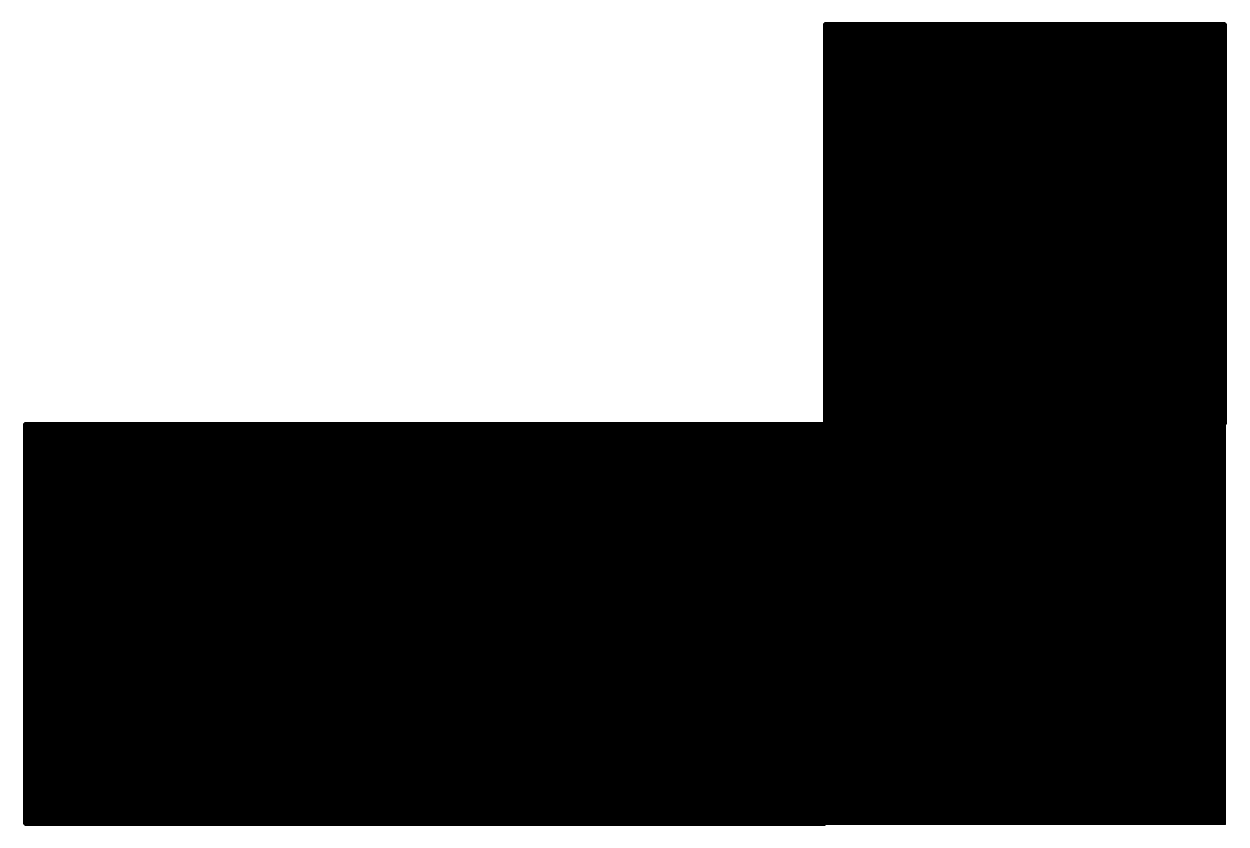}
  \captionsetup{width=1.2\linewidth,labelsep=newline,parindent=0cm,justification=centering}
\end{center}
  \end{subfigure}\quad&
  \begin{subfigure}{0.1\textwidth}
\begin{center}\includegraphics[width=1.2\linewidth]{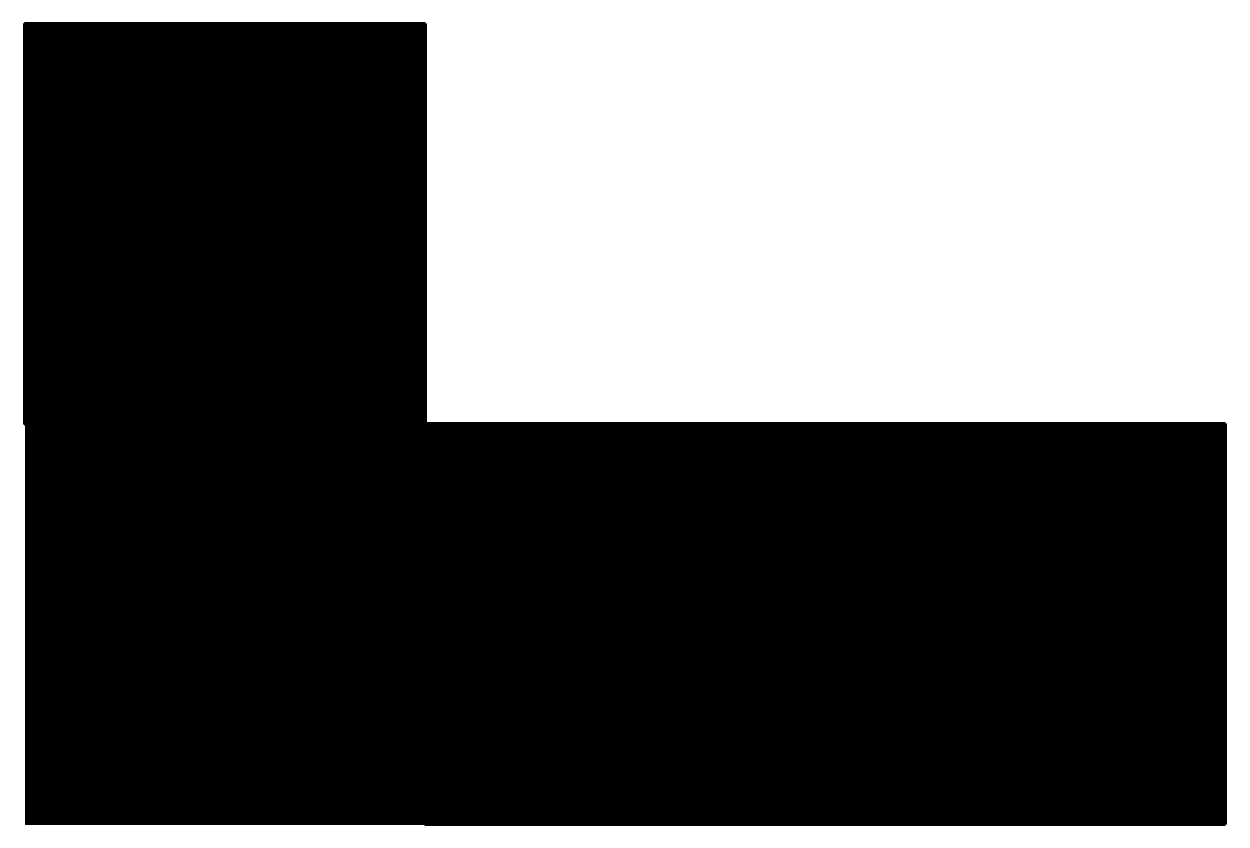}
  \captionsetup{width=1.2\linewidth,labelsep=newline,parindent=0cm,justification=centering}
\end{center}
 \end{subfigure}\quad&
\begin{subfigure}{0.1\textwidth}
\begin{center}\hskip0.1cm\includegraphics[width=0.8\linewidth]{tetramin_13.pdf}
  \captionsetup{width=1.2\linewidth,labelsep=newline,parindent=0cm,justification=centering}
\end{center}
  \end{subfigure}
  \quad&
  
  &

& 
     \\
       \\
       \\
 \begin{subfigure}{0.1\textwidth}
\begin{center}\includegraphics[width=1.2\linewidth]{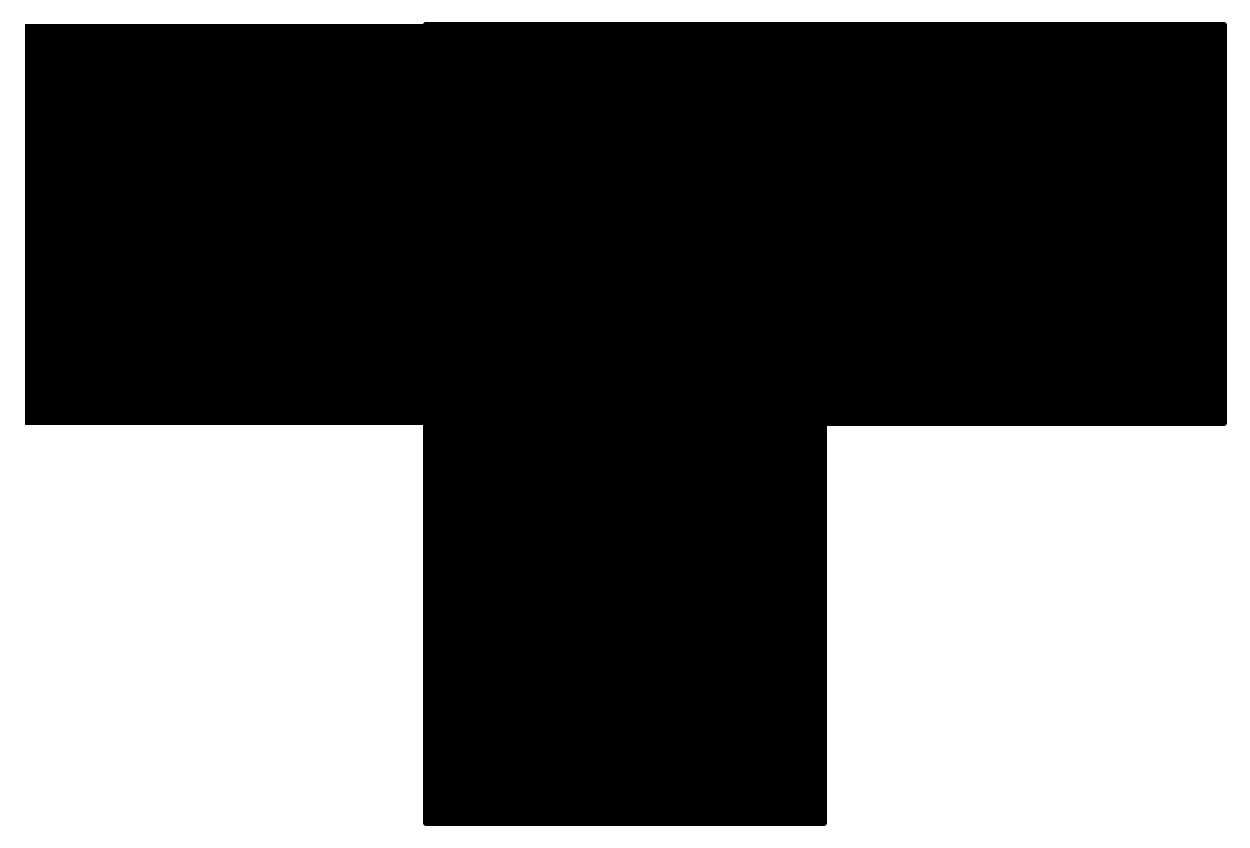}
  \captionsetup{width=1.2\linewidth,labelsep=newline,parindent=0cm,justification=centering}
\caption{\small$c_1=1.02$ $c_2=7.24$}\end{center}
  \end{subfigure}\qquad&
   \begin{subfigure}{0.1\textwidth}
\begin{center}\includegraphics[width=1.2\linewidth]{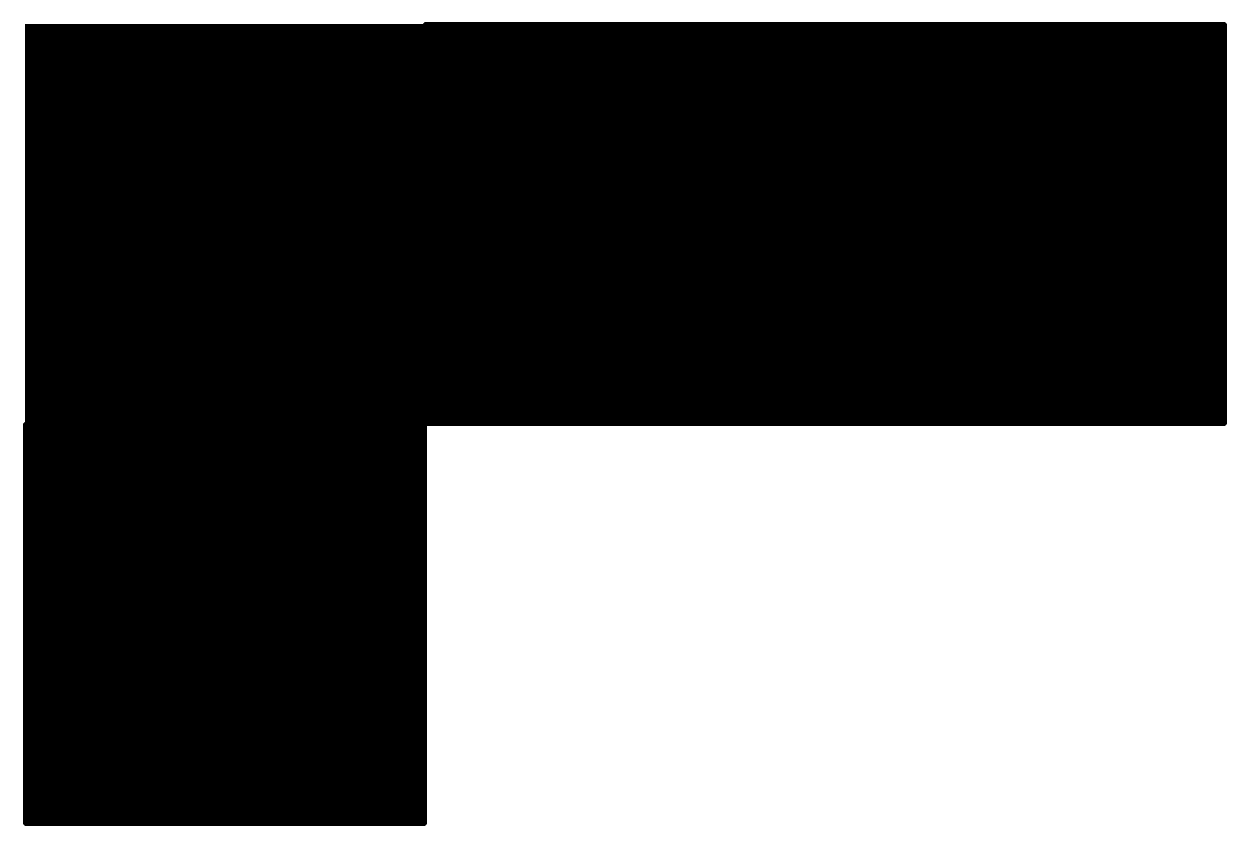}
  \captionsetup{width=1.2\linewidth,labelsep=newline,parindent=0cm,justification=centering}
  \caption{\small$c_1=0.71$ $c_2=6.23$}\end{center}
 \end{subfigure}\qquad&
  \begin{subfigure}{0.1\textwidth}
\begin{center}\includegraphics[width=1.2\linewidth]{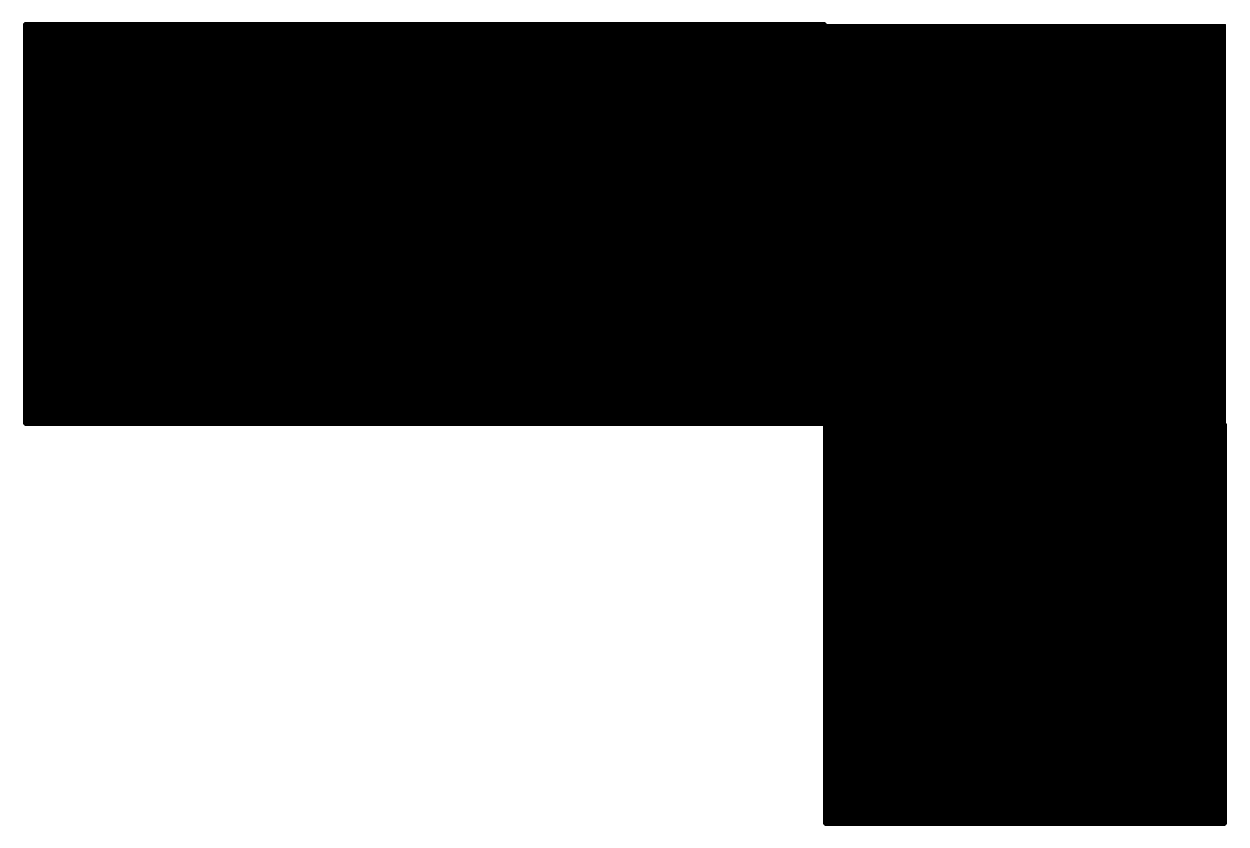}
  \captionsetup{width=1.2\linewidth,labelsep=newline,parindent=0cm,justification=centering}
\caption{\small$c_1=0.83$ $c_2=7.33$}\end{center}
 \end{subfigure}\qquad
 &
 \begin{subfigure}{0.1\textwidth}
\vskip-0.45cm\begin{center}\hskip0.1cm\includegraphics[width=0.8\linewidth]{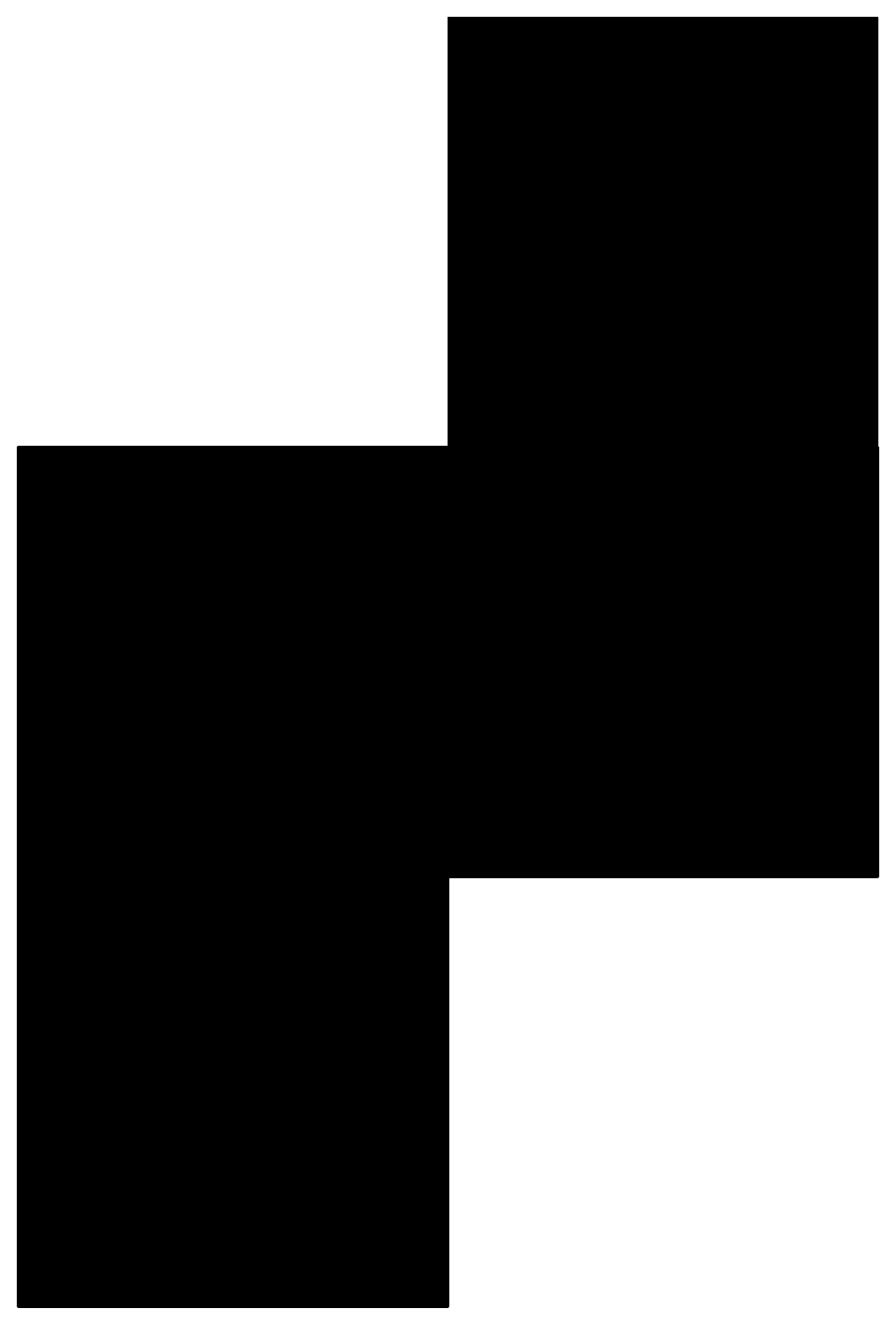}
  \captionsetup{width=1.2\linewidth,labelsep=newline,parindent=0cm,justification=centering}
  \caption{\small$c_1=1.17$ $c_2=8.02$}\end{center}
  \end{subfigure}
  \quad
 & \begin{subfigure}{0.1\textwidth}
\begin{center}\hskip0.1cm\includegraphics[width=1.2\linewidth]{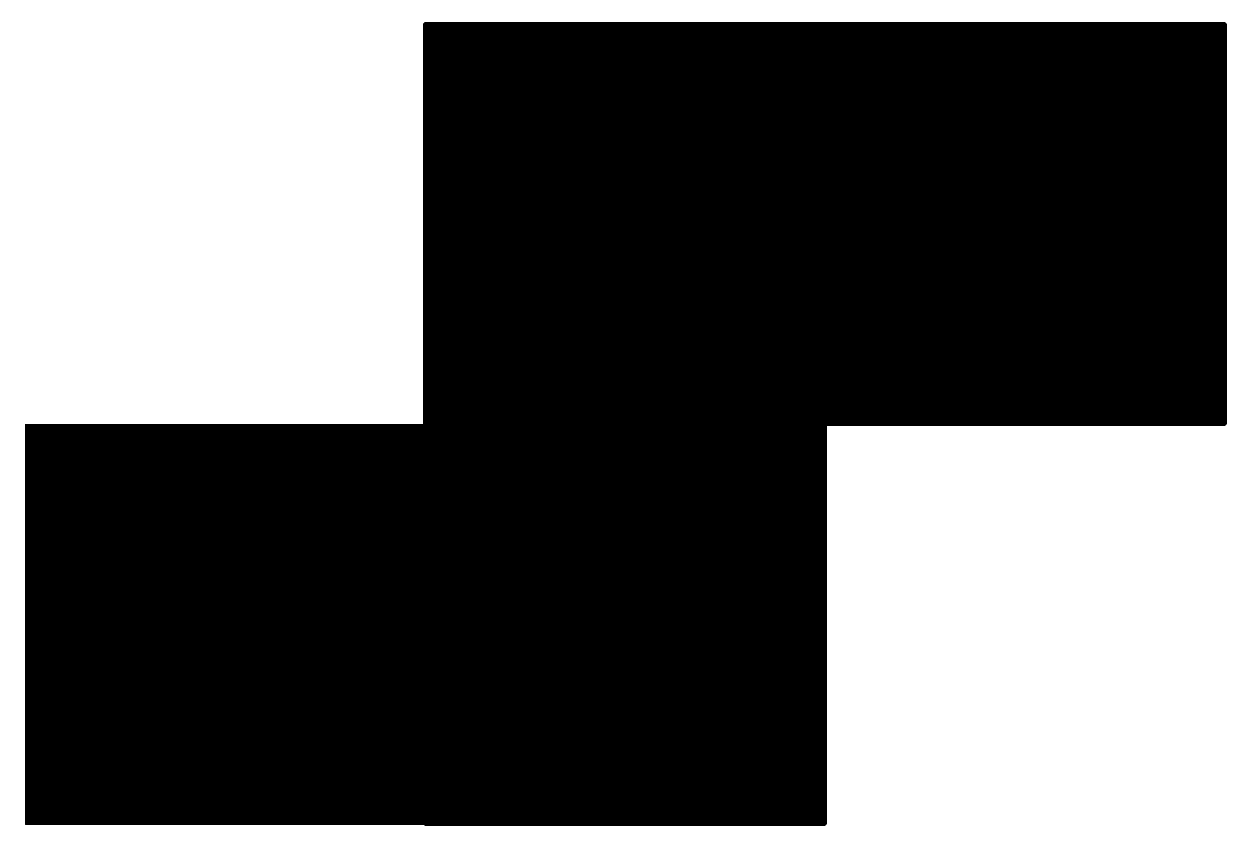}
  \captionsetup{width=1.2\linewidth,labelsep=newline,parindent=0cm,justification=centering}
  \caption{\small$c_1=1.24$ $c_2=7.53$}\end{center}
  \end{subfigure}  \qquad& 
  \begin{subfigure}{0.1\textwidth}\vskip-0.45cm
\begin{center}\hskip0.1cm\includegraphics[width=0.8\linewidth]{tetramin_8.pdf}
  \captionsetup{width=1.2\linewidth,labelsep=newline,parindent=0cm,justification=centering}
  \caption{\small$c_1=0.22$ $c_2=7.92$}\end{center}
  \end{subfigure}  & 
  \\
 
   \end{tabular}
\caption{\label{snubsquaredom1}Truncated square tiling: connected domains satisfying (A2) and related constants. The smallest ratio $c_2/c_1$ corresponds to case (C). 
\label{gooddomains_5}
}
   \end{figure}
    \begin{figure}[h!]
 \includegraphics[scale=1]{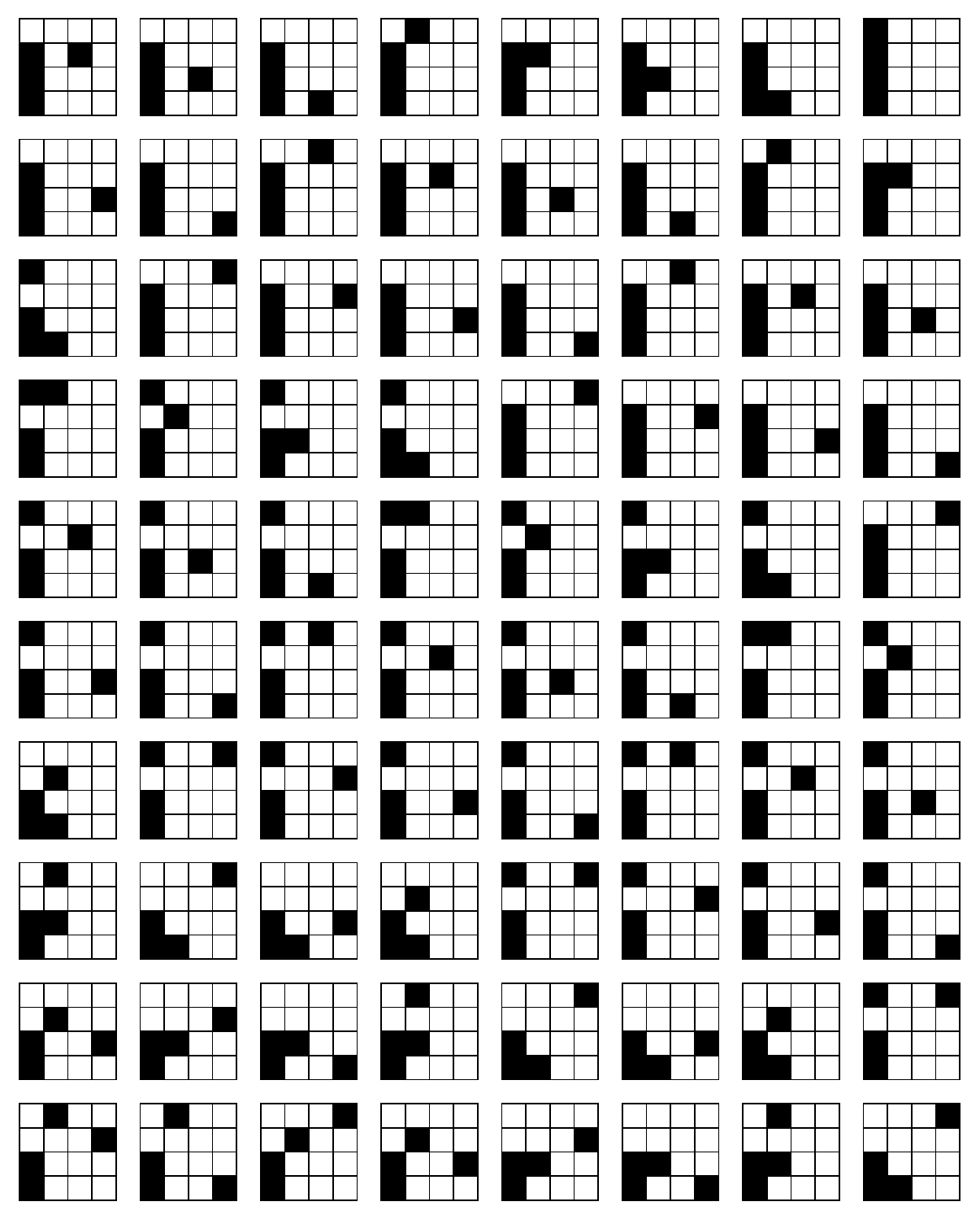}
 \caption{Truncated square tiling: first 80 of the 892 (over 1820) domains of the form $\cup_{k=1}^4 \Omega_0+v_k$ not satisfying condition (A2).\label{baddomains_5}}
\end{figure}

 \clearpage
 \subsection{Snub-hexagonal tiling}\.\\
\begin{figure}[h!]
\includegraphics[scale=0.3]{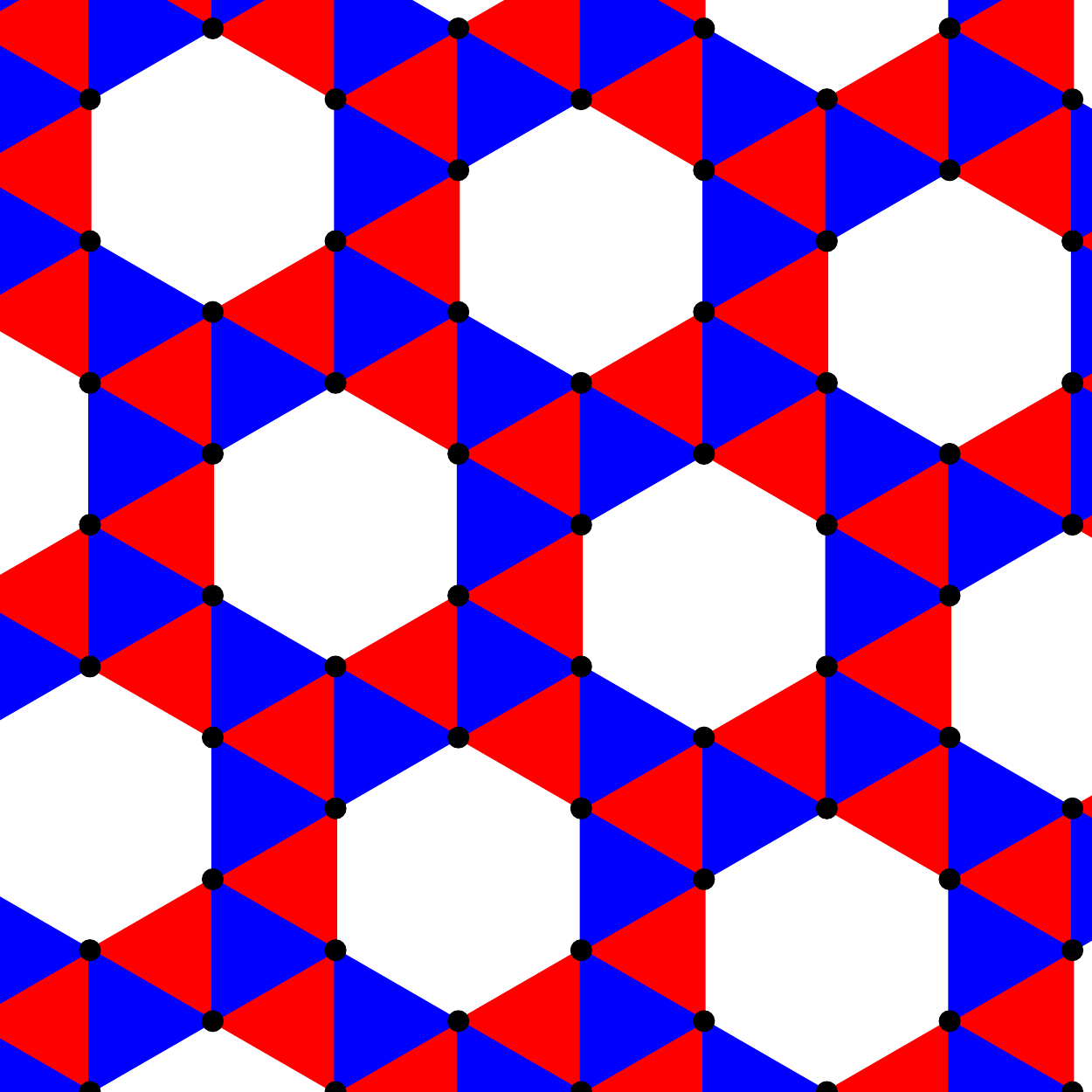}\hskip0.5cm\includegraphics[scale=0.3]{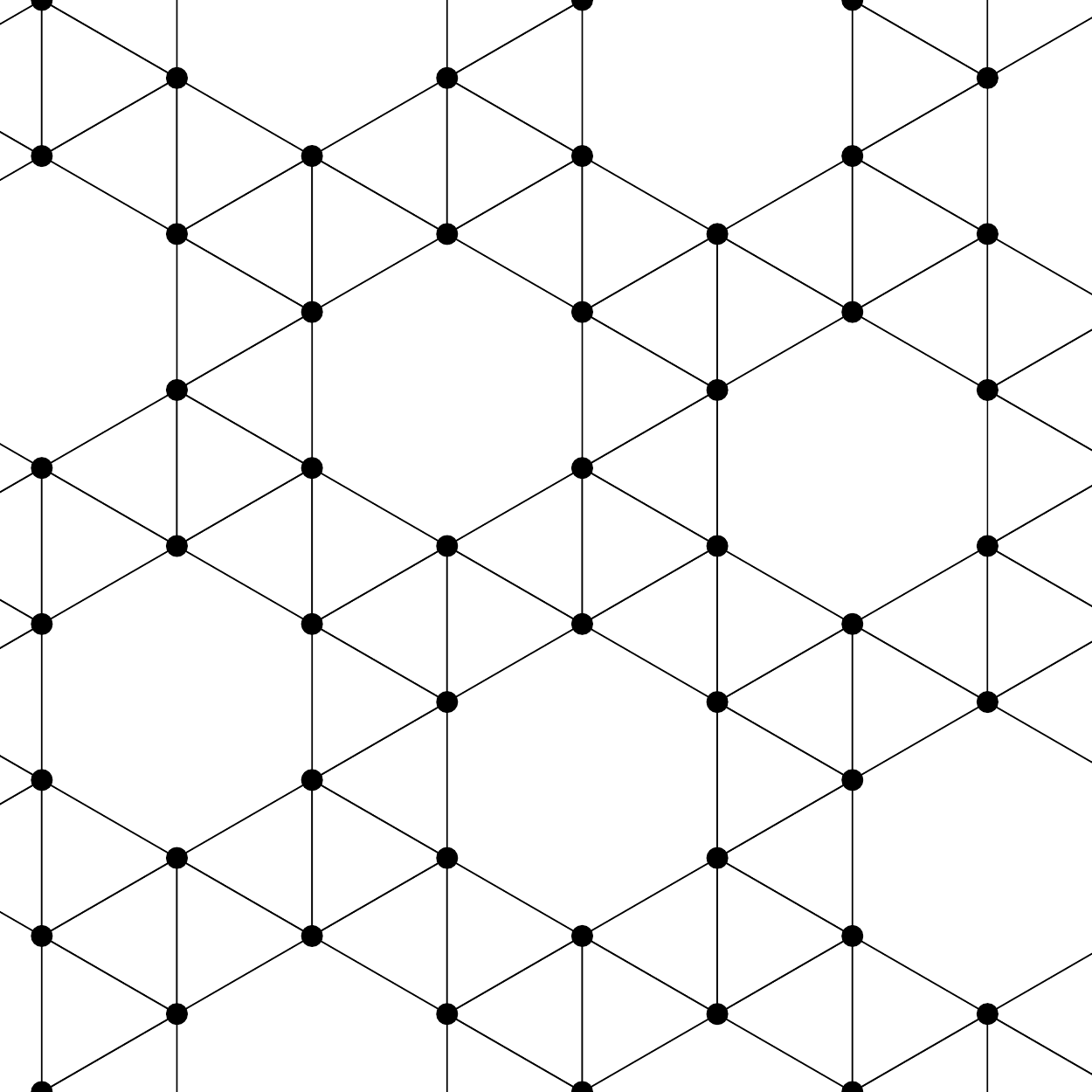}\\\vskip0.5cm
 \includegraphics[scale=0.3]{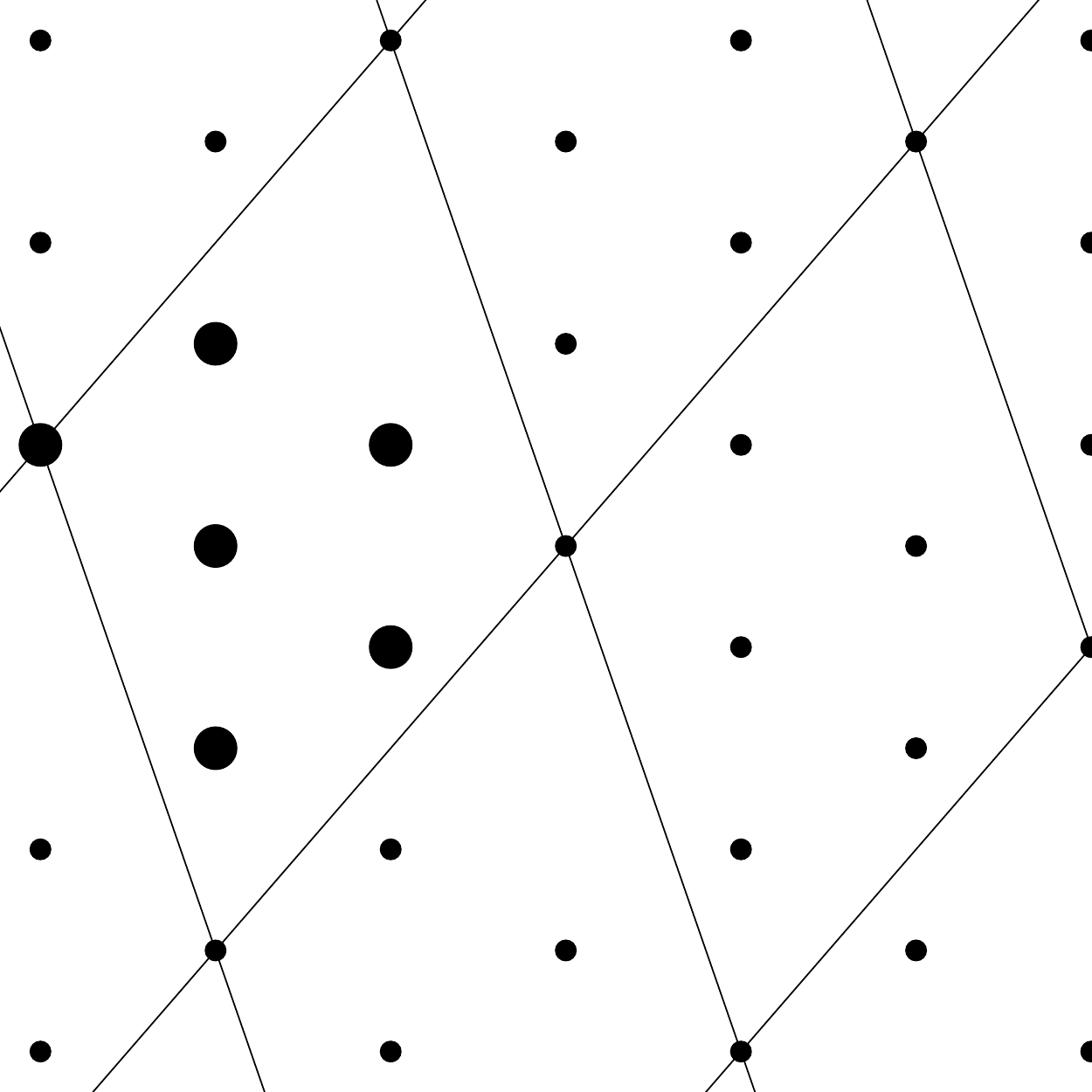}\hskip0.5cm\includegraphics[scale=0.3]{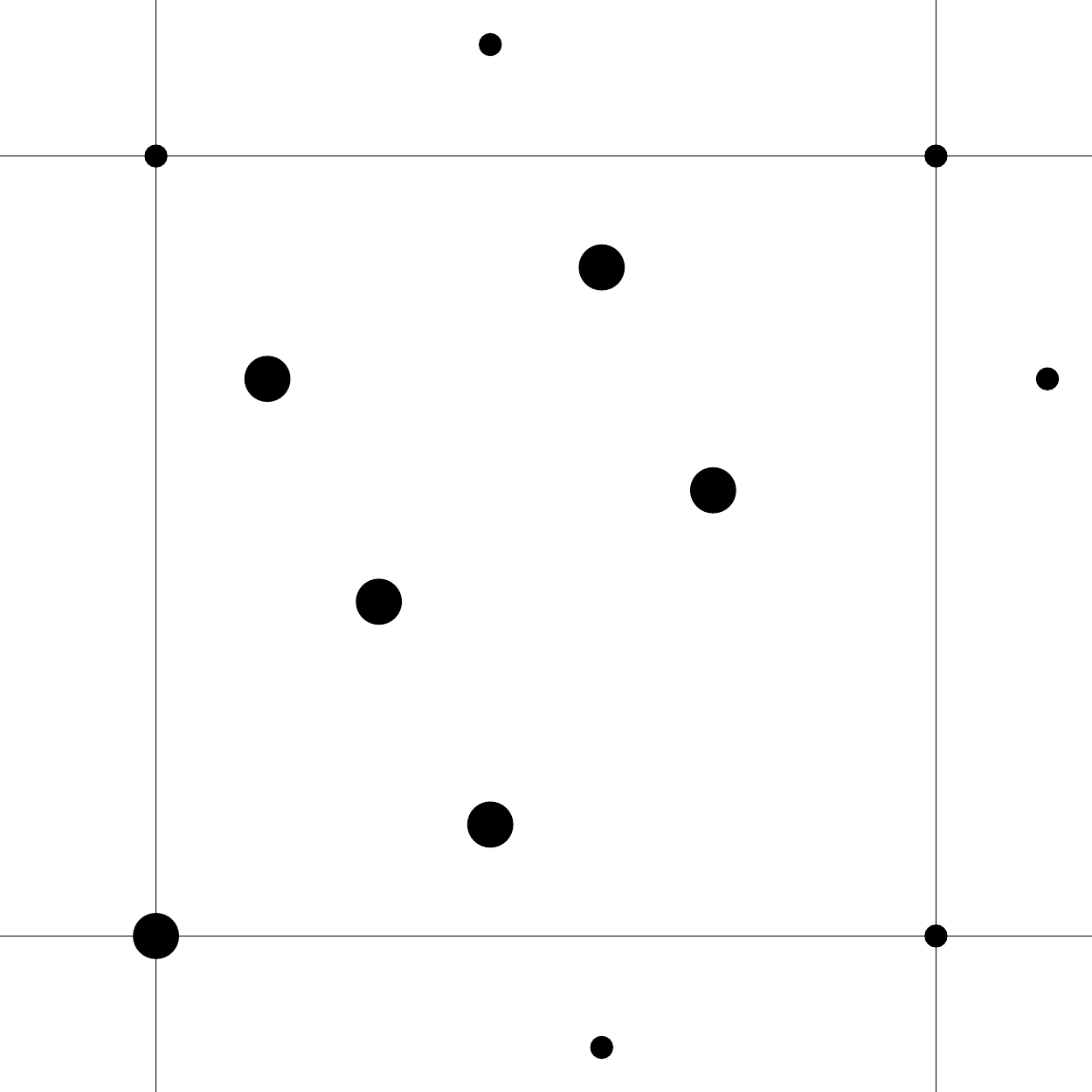}\\
 \caption{Snub-hexagonal tiling}\end{figure}\\
 $\Lambda=\cup_{j=1}^6 L^*(u_j+ \ZZ^2)$
 where
 $$
\begin{array}{ll}
u_1= \frac{1}{7}(0, 0)&  u_2= \frac{1}{7}(3,1) \\
 u_3= \frac{1}{7}(2,3)& u_4=\frac{1}{7}(5,4) \\
 u_5= \frac{1}{7}(1,5) & u_6=\frac{1}{7}(4,6) \\
\end{array}
$$
and 
$$L^*=\left(
\begin{array}{cc}
 \sqrt{3} & \frac{\sqrt{3}}{2} \\
 2 & -\frac{5}{2} \\
\end{array}
\right)$$\vskip0.2cm
\textbf{Example.} Choosing
  $$
\begin{array}{ll}
v_1=(0,0 )&
 v_2=(0,1)\\
 v_3=(0,2) &
 v_4=(0,3) \\
 v_5=(0,4) &
 v_6=(0,5) 
\end{array}
$$
condition (A2) is satisfied and the correspondig constants are $c_1=1$ and $c_2=7$.
\vskip0.5cm
\textbf{Example.} Choosing
  $$
\begin{array}{ll}
v_1=(0,0 )&
 v_2=(0,1)\\
 v_3=(0,2) &
 v_4=(0,3) \\
 v_5=(0,4) &
 v_6=(1,4) 
\end{array}
$$
condition (A2) is not satisfied.

\clearpage
\subsection{Rhombitrihexagonal tiling}\label{ssrombi}\.\\
\begin{figure}[h!]
 \includegraphics[scale=0.3]{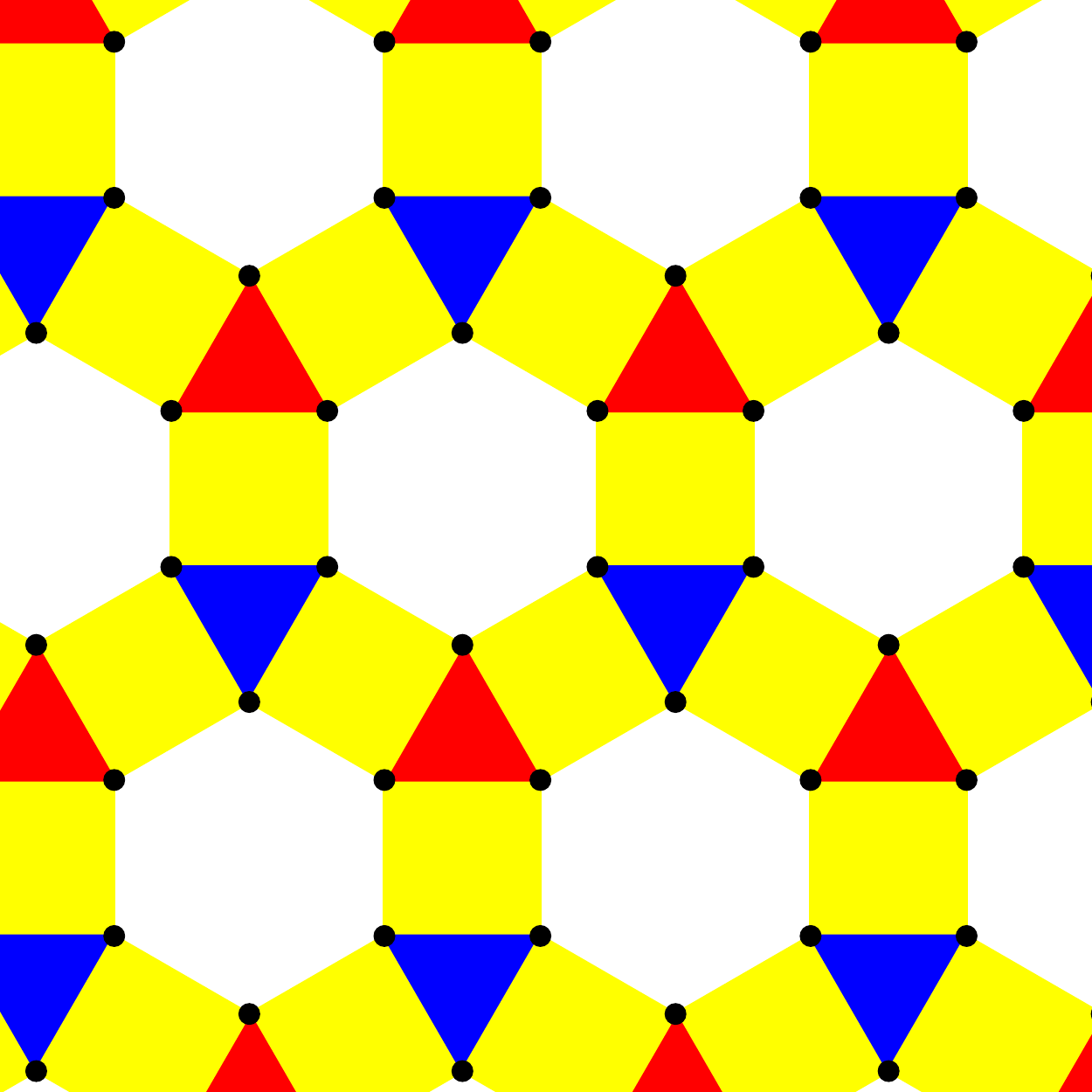}\hskip0.5cm\includegraphics[scale=0.3]{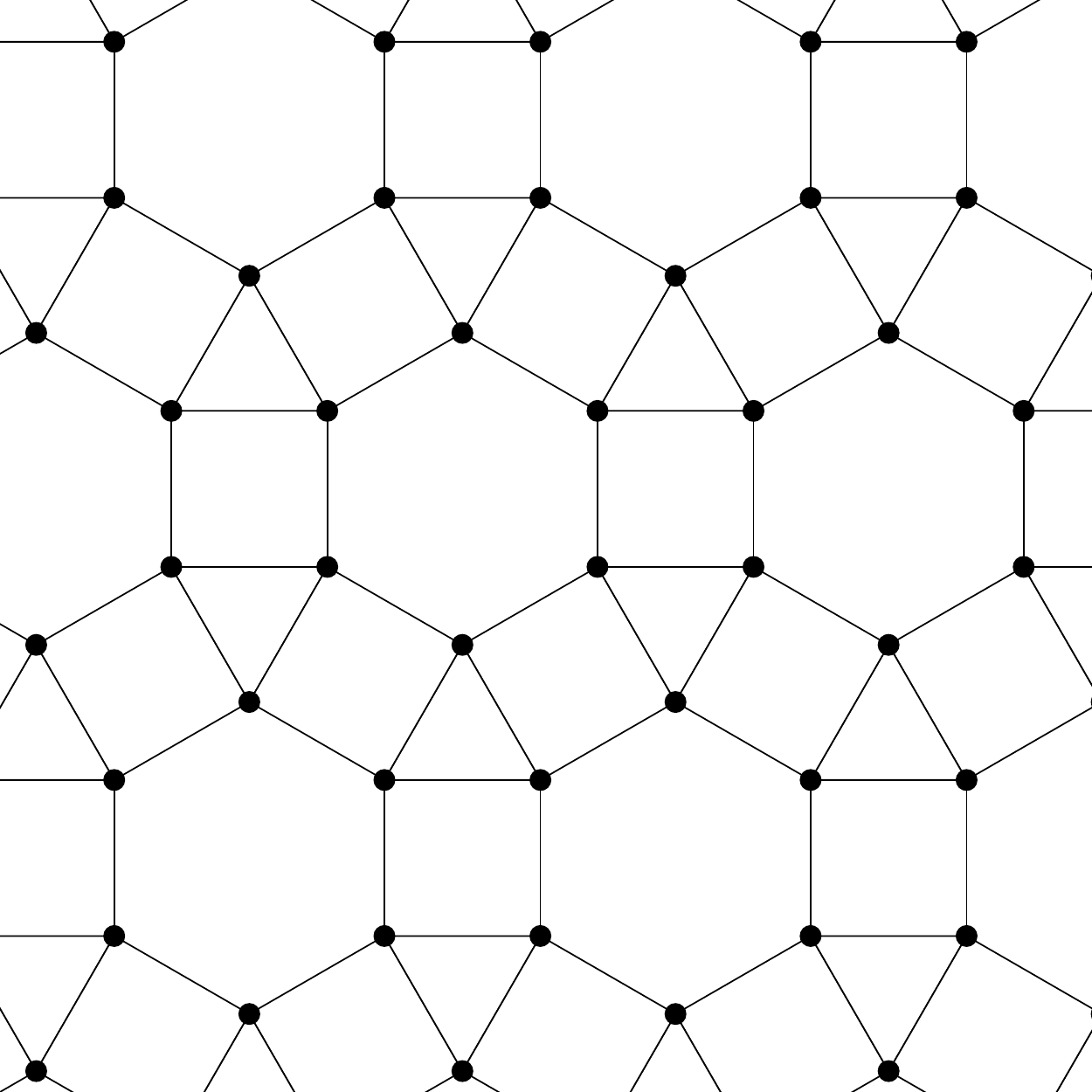}\\\vskip0.5cm
 \includegraphics[scale=0.3]{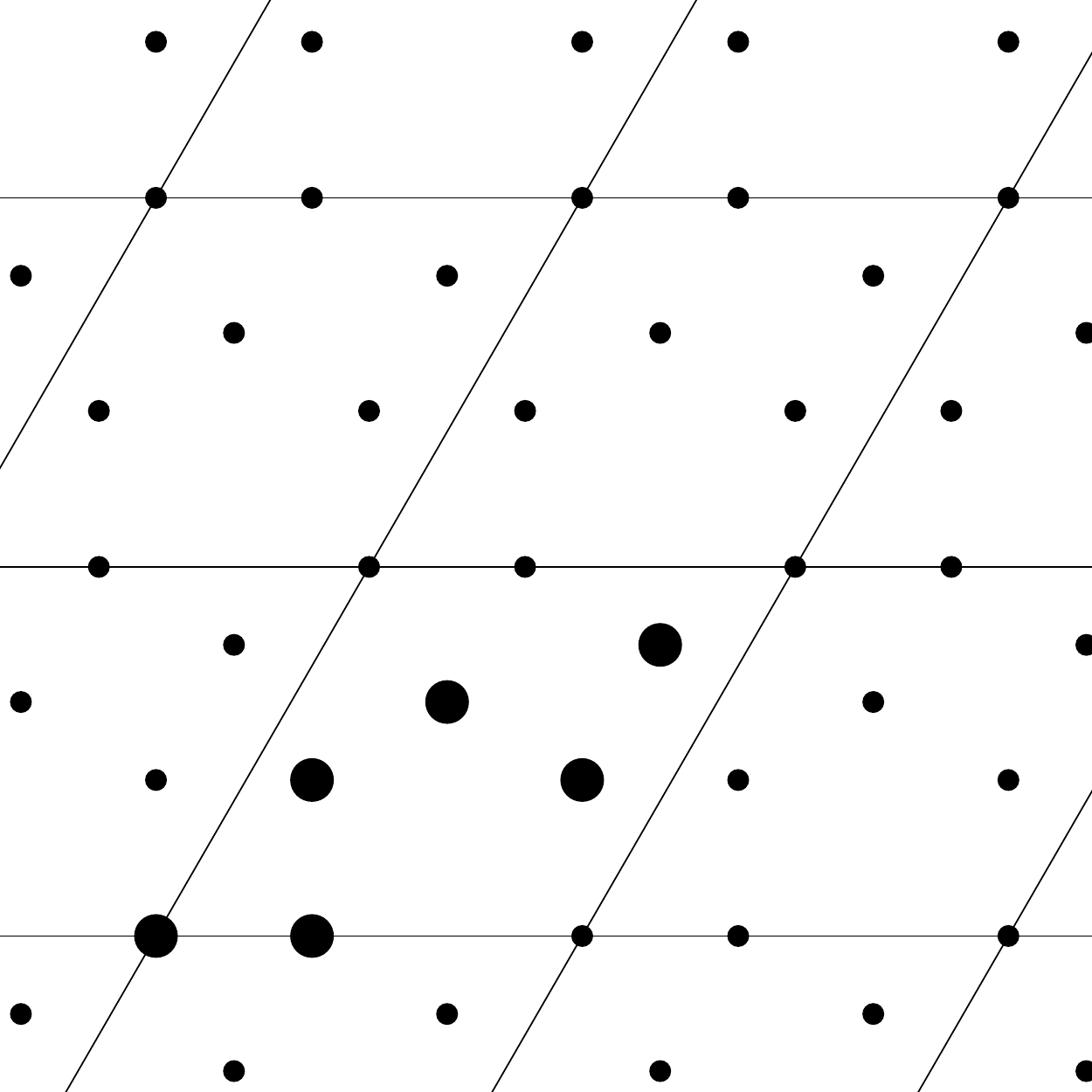}\hskip0.5cm\includegraphics[scale=0.3]{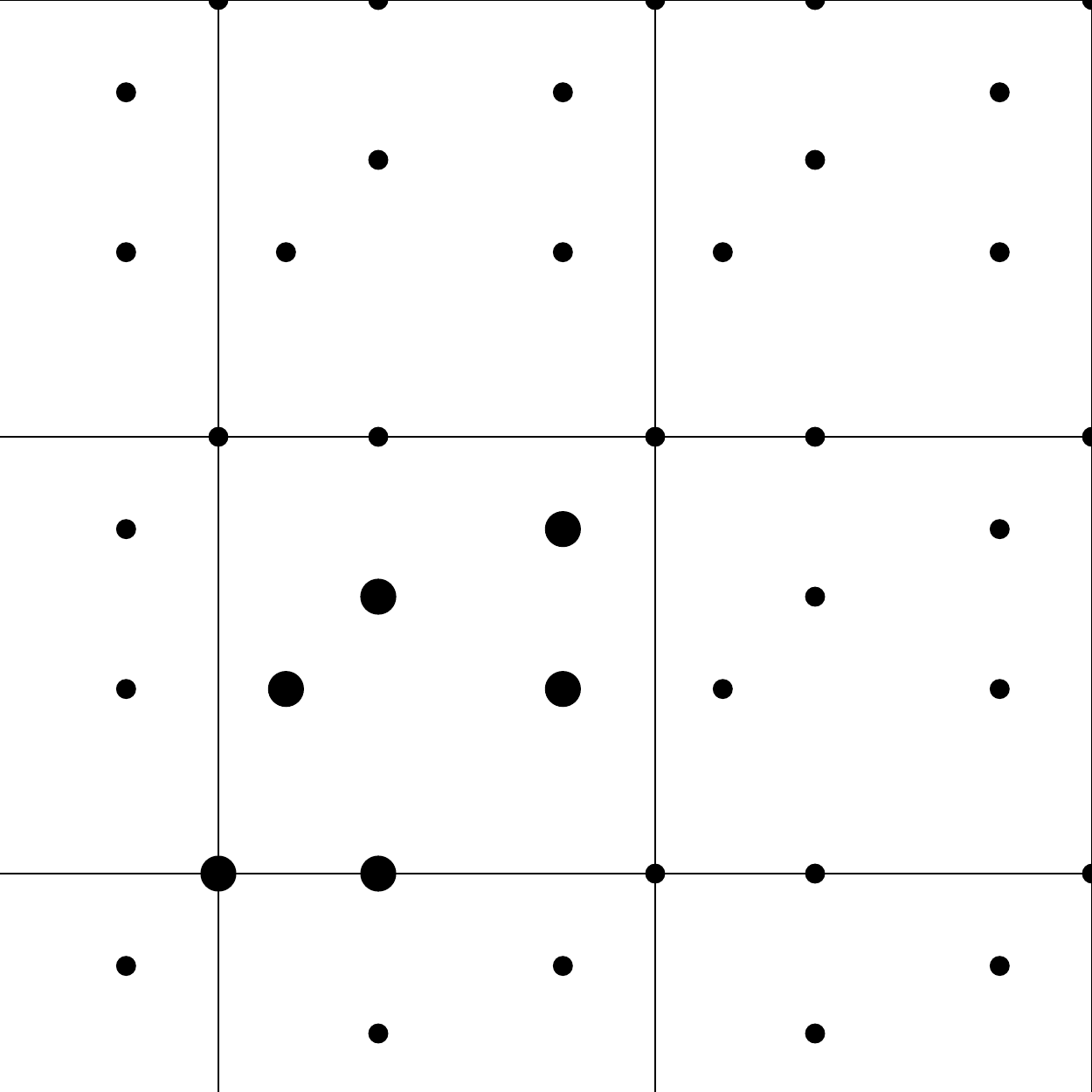}\\
 \caption{Rhombitrihexagonal tiling}\end{figure}\\
$\Lambda=\cup_{j=1}^6 L^*(u_j+ \ZZ^2)$
 where
 $$
\begin{array}{ll}
u_1= (0,0) &
u_2= (\frac{1}{2}(-1+\sqrt{3}), 0)\\
u_3= (-1+\frac{2}{3}\sqrt{3},  1-\frac{\sqrt{3}}{3})\qquad &
 u_4=(\frac{1}{2}(-1+\sqrt{3}), \frac{1}{2}(3-\sqrt{3})) \\
u_5= (\frac{1}{6}(3+\sqrt{3}), 1-\frac{\sqrt{3}}{3}) &
u_6= (\frac{1}{6}(3+\sqrt{3}),\frac{1}{6}(3+\sqrt{3})) 
\end{array}
$$
and 
$$L^*=\left(
\begin{array}{cc}
 1+\sqrt{3} & \frac{1}{2} \left(1+\sqrt{3}\right) \\
 0 & \frac{1}{2} \left(3+\sqrt{3}\right) \\
\end{array}
\right)$$
\vskip0.2cm
\textbf{Example.} Choosing
  $$
\begin{array}{ll}
v_1=(0,0 )&
 v_2=(0,1)\\
 v_3=(0,2) &
 v_4=(0,3) \\
 v_5=(0,4) &
 v_6=(0,5) 
\end{array}
\quad\text{or}\quad
 \begin{array}{ll}
v_1=(0,0 )&
 v_2=(0,1)\\
 v_3=(0,2) &
 v_4=(0,3) \\
 v_5=(0,4) &
 v_6=(1,4) 
\end{array}
$$
condition (A2) is not satisfied (see Examples of the previous section). 
\vskip0.2cm
\textbf{Example.} Choosing
  $$
\begin{array}{ll}
v_1=(0,0 )&
 v_2=(0,1)\\
 v_3=(0,2) &
 v_4=(0,3) \\
 v_5=(1,3) &
 v_6=(1,4) 
\end{array}
$$
condition (A2) is satisfied and the correspondig constants are $c_1=0.47$ and $c_2=11.92$.

\clearpage
\subsection{Truncated hexagonal tiling}\.\\

\begin{figure}[h!]
 \includegraphics[scale=0.3]{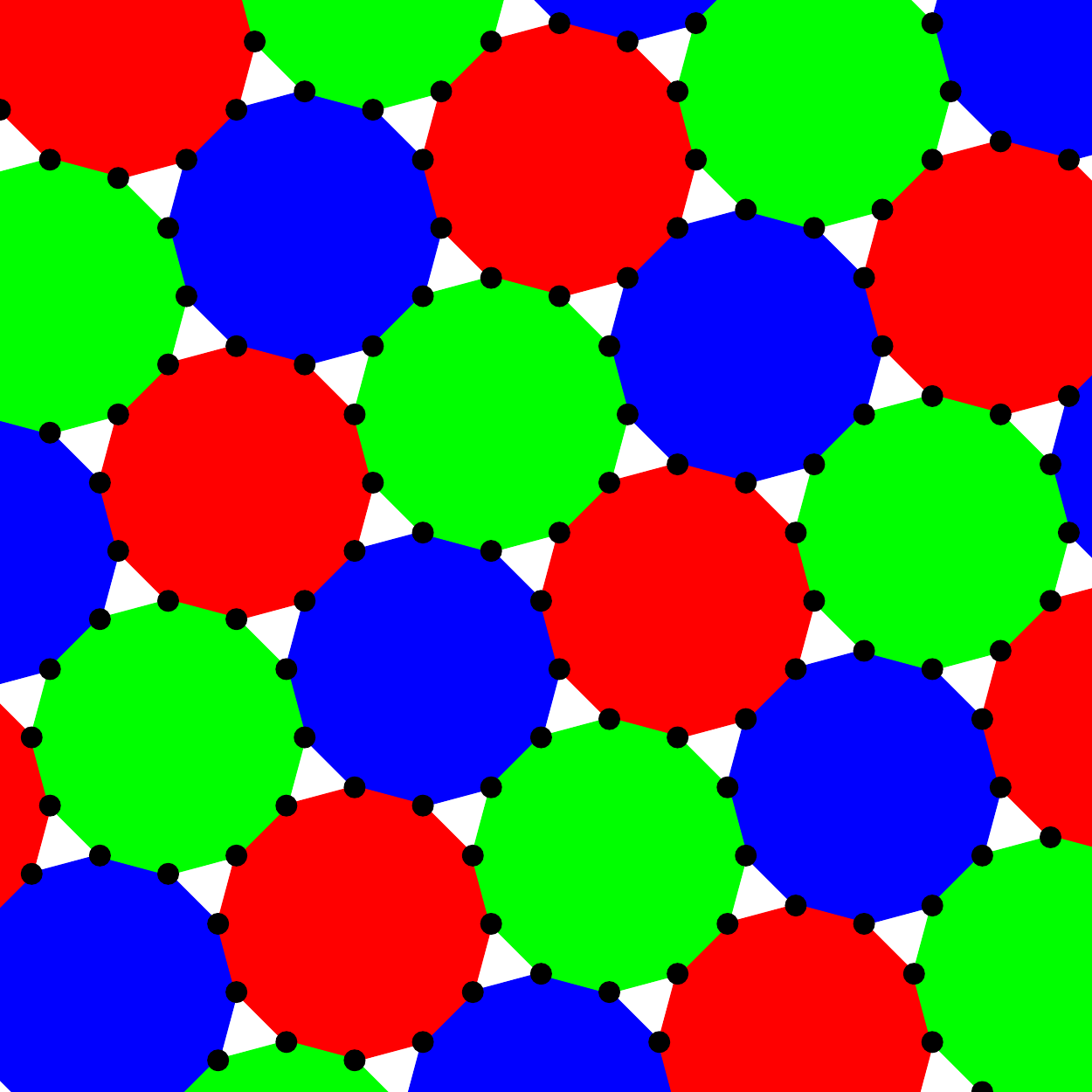}\hskip0.5cm\includegraphics[scale=0.3]{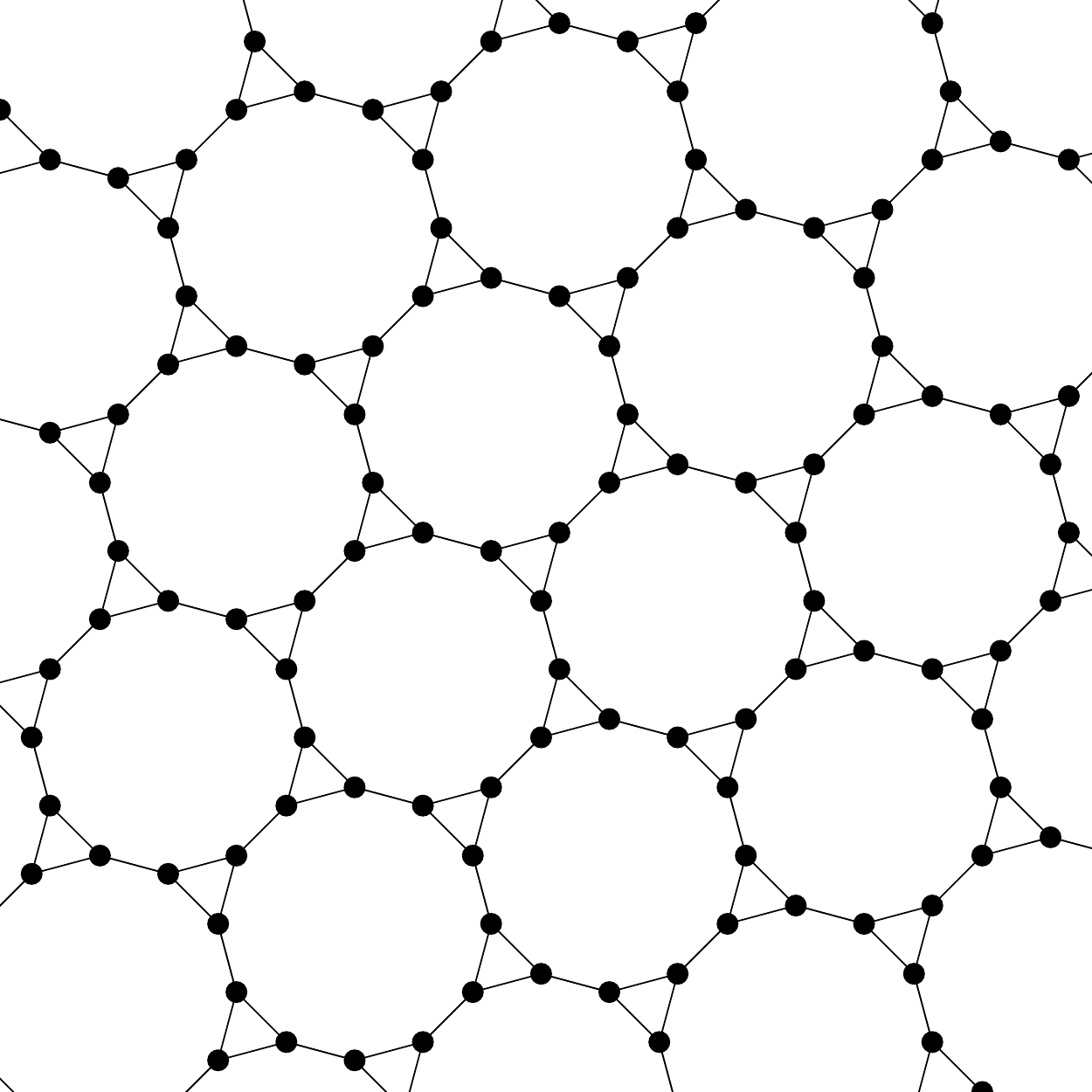}\\\vskip0.5cm
 \includegraphics[scale=0.3]{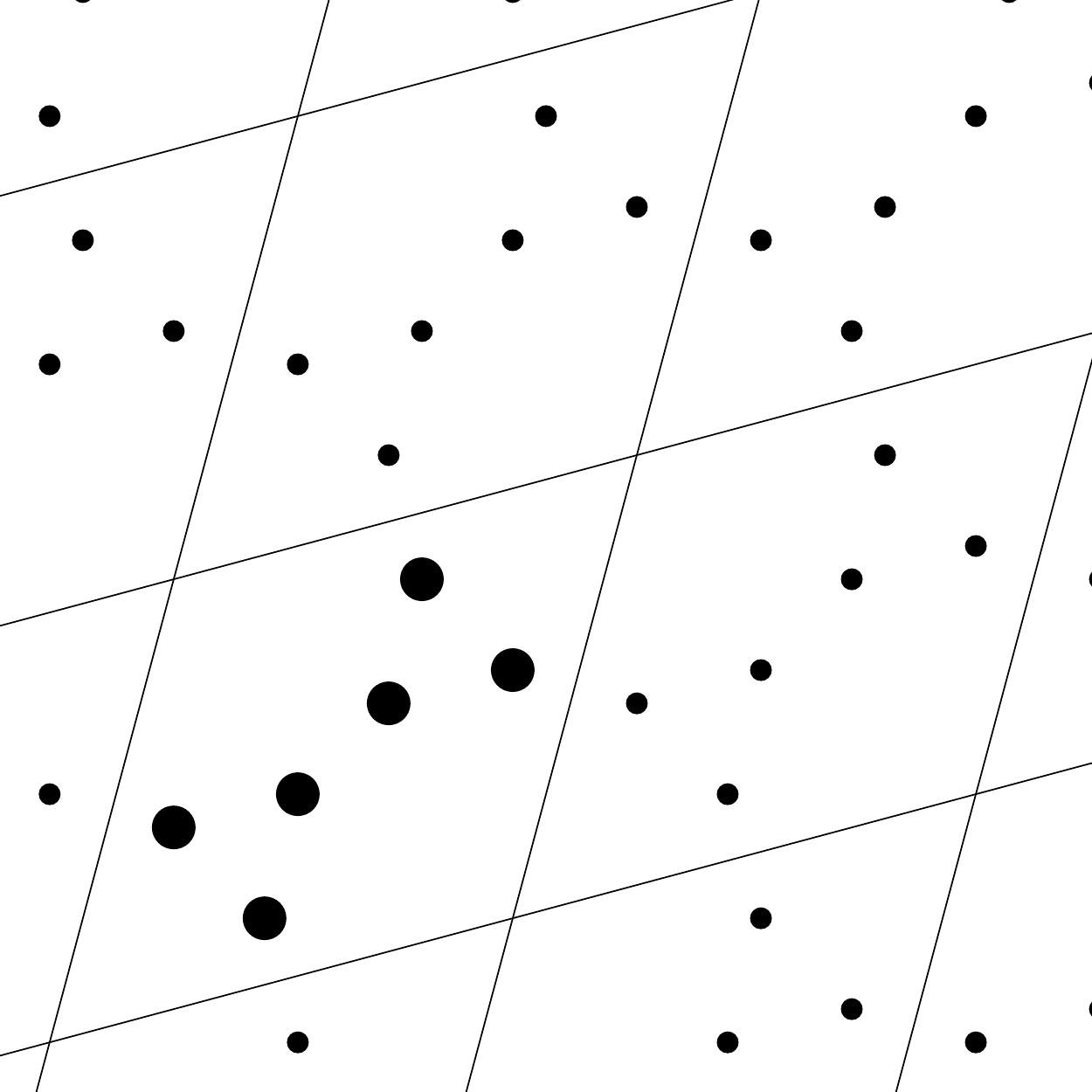}\hskip0.5cm\includegraphics[scale=0.3]{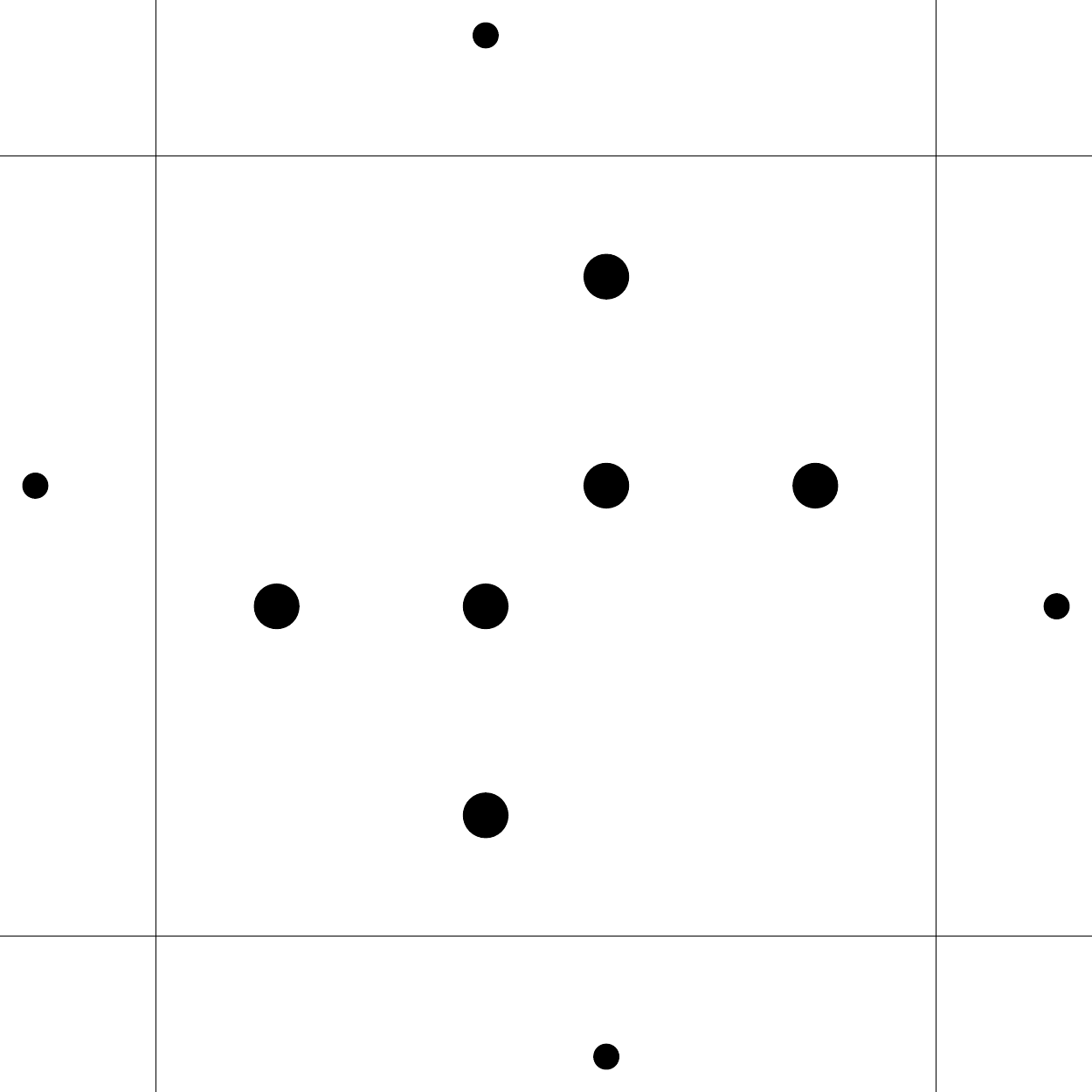}\\
 \caption{Truncated hexagonal tiling}\end{figure}
 $\Lambda=\cup_{j=1}^6L^*( u_j+ \ZZ^2)$ where
 $$
\begin{array}{ll}
u_1= (\frac{\sqrt{3}}{3},2-2\frac{\sqrt{3}}{3}) &
u_2= (1-\frac{\sqrt{3}}{3}, -1+2\frac{\sqrt{3}}{3})\\
u_3= (-1+2\frac{\sqrt{3}}{3},  1-\frac{\sqrt{3}}{3})\qquad &
 u_4=(2-2\frac{\sqrt{3}}{3}), \frac{\sqrt{3}}{3}) \\
u_5= (\frac{\sqrt{3}}{3}, \frac{\sqrt{3}}{3}) &
u_6= (1-\frac{\sqrt{3}}{3},1-\frac{\sqrt{3}}{3}) 
\end{array}
$$
and 
$$L^*=\left(
\begin{array}{cc}
 1+\frac{\sqrt{3}}{2} & \frac{1}{2} \\
 \frac{1}{2} & 1+\frac{\sqrt{3}}{2} \\
\end{array}
\right).$$
\vskip0.5cm
\textbf{Example.} As in Section \ref{ssrombi}, choosing
  $$
\begin{array}{ll}
v_1=(0,0 )&
 v_2=(0,1)\\
 v_3=(0,2) &
 v_4=(0,3) \\
 v_5=(0,4) &
 v_6=(0,5) 
\end{array}
\quad\text{or}\quad
\begin{array}{ll}
v_1=(0,0 )&
 v_2=(0,1)\\
 v_3=(0,2) &
 v_4=(0,3) \\
 v_5=(0,4) &
 v_6=(1,4) 
\end{array}
$$
condition (A2) is not satisfied. 
On the other hand, again as in Section \ref{ssrombi}, the choice
  $$
\begin{array}{ll}
v_1=(0,0 )&
 v_2=(0,1)\\
 v_3=(0,2) &
 v_4=(0,3) \\
 v_5=(1,3) &
 v_6=(1,4) 
\end{array}
$$
satisfies condition (A2). The correspondig constants are $c_1=0.15$ and $c_2=15.6$.

\clearpage
\subsection{Truncated trihexagonal tiling}
$\Lambda=\cup_{j=1}^{12} u_j+ L^* \ZZ^2$
 where
\begin{figure}[h!]
 \includegraphics[scale=0.3]{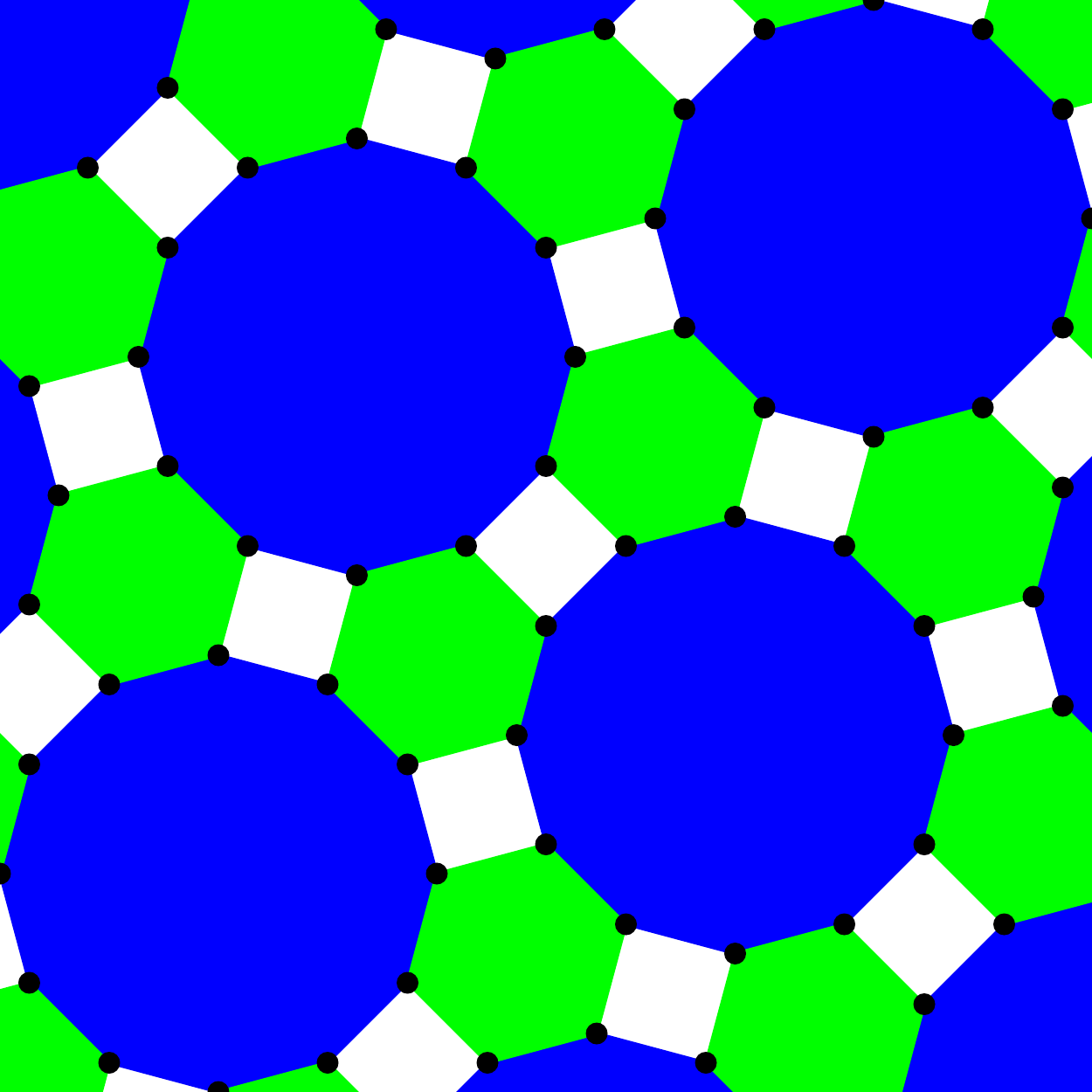}\hskip0.5cm\includegraphics[scale=0.3]{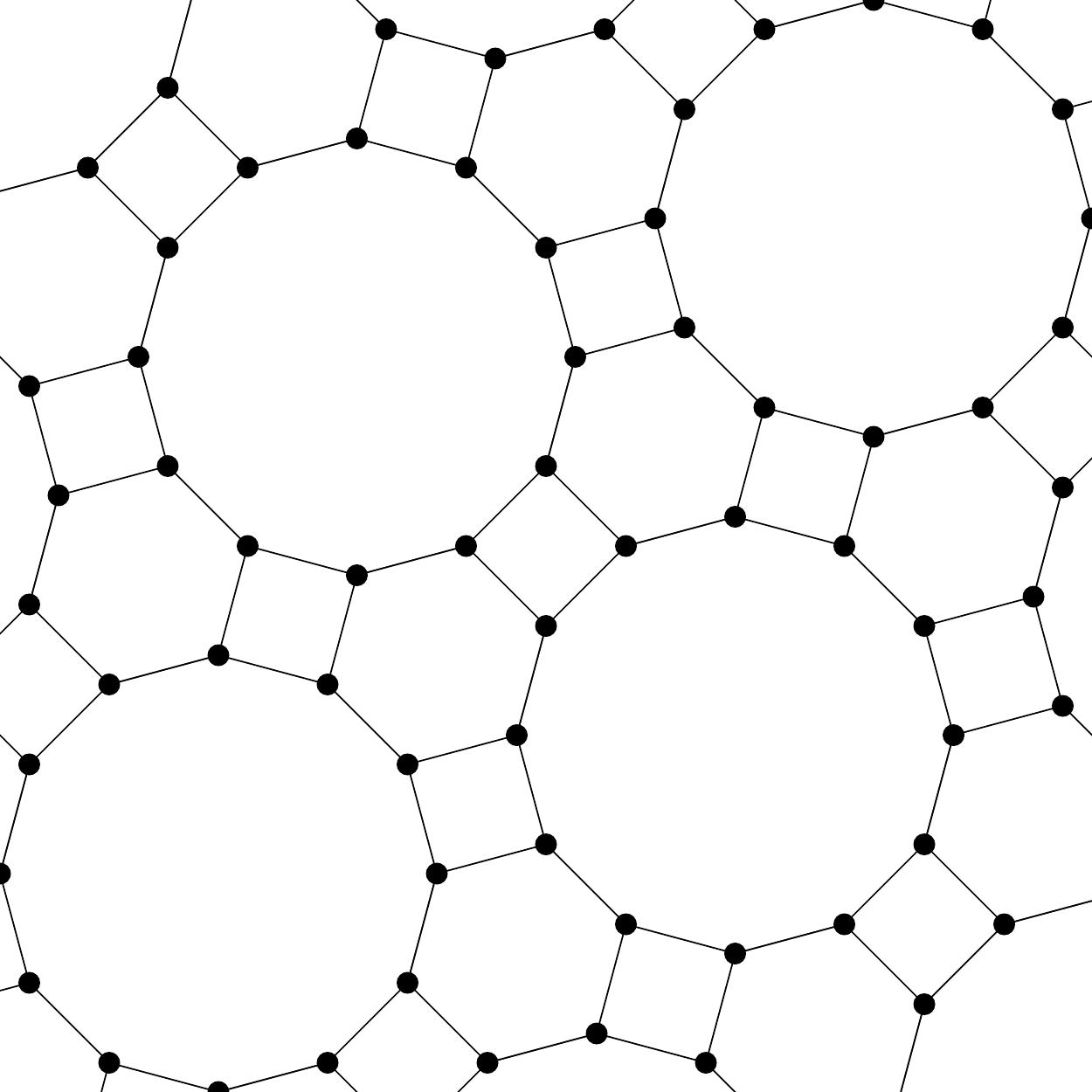}\\\vskip0.5cm
 \includegraphics[scale=0.3]{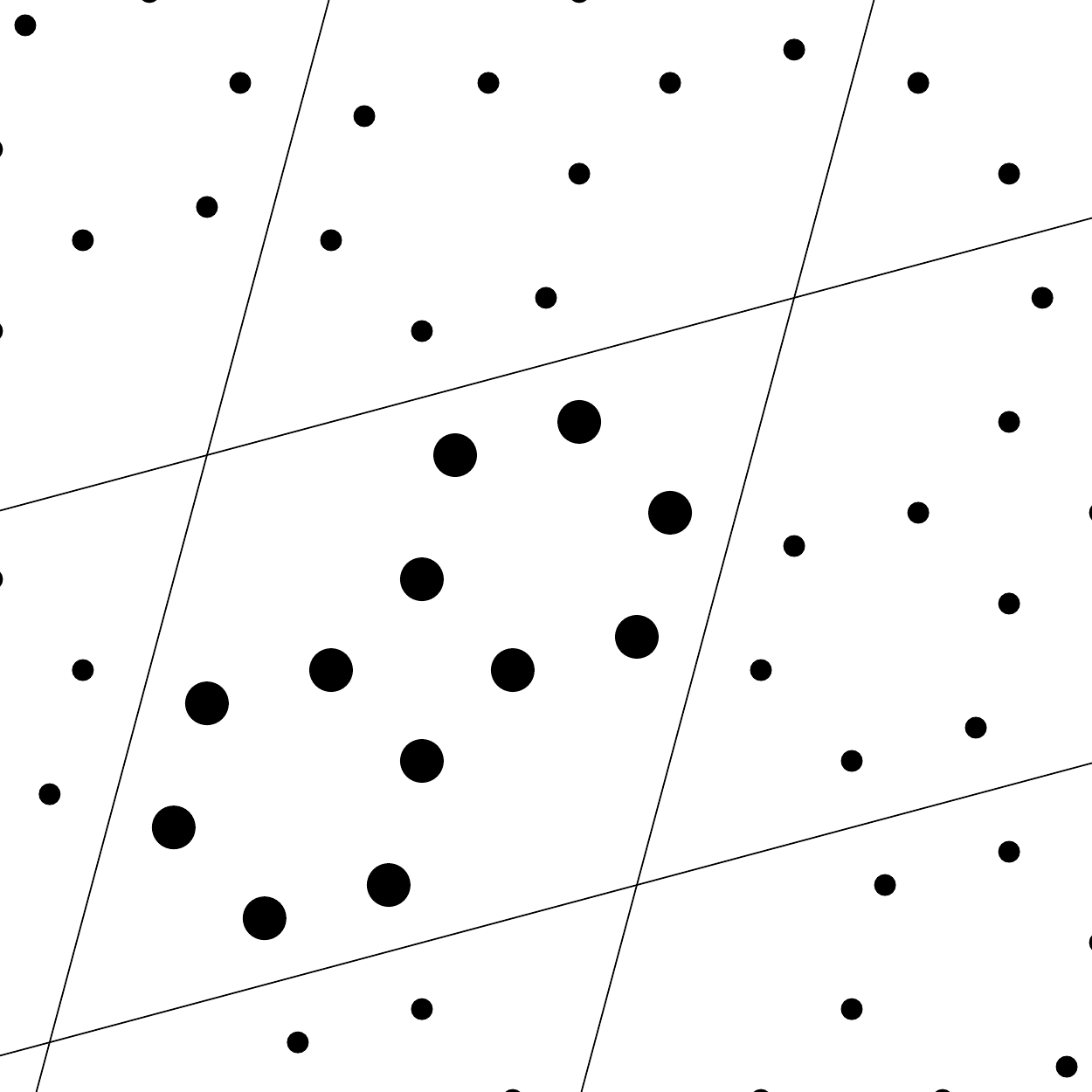}\hskip0.5cm\includegraphics[scale=0.3]{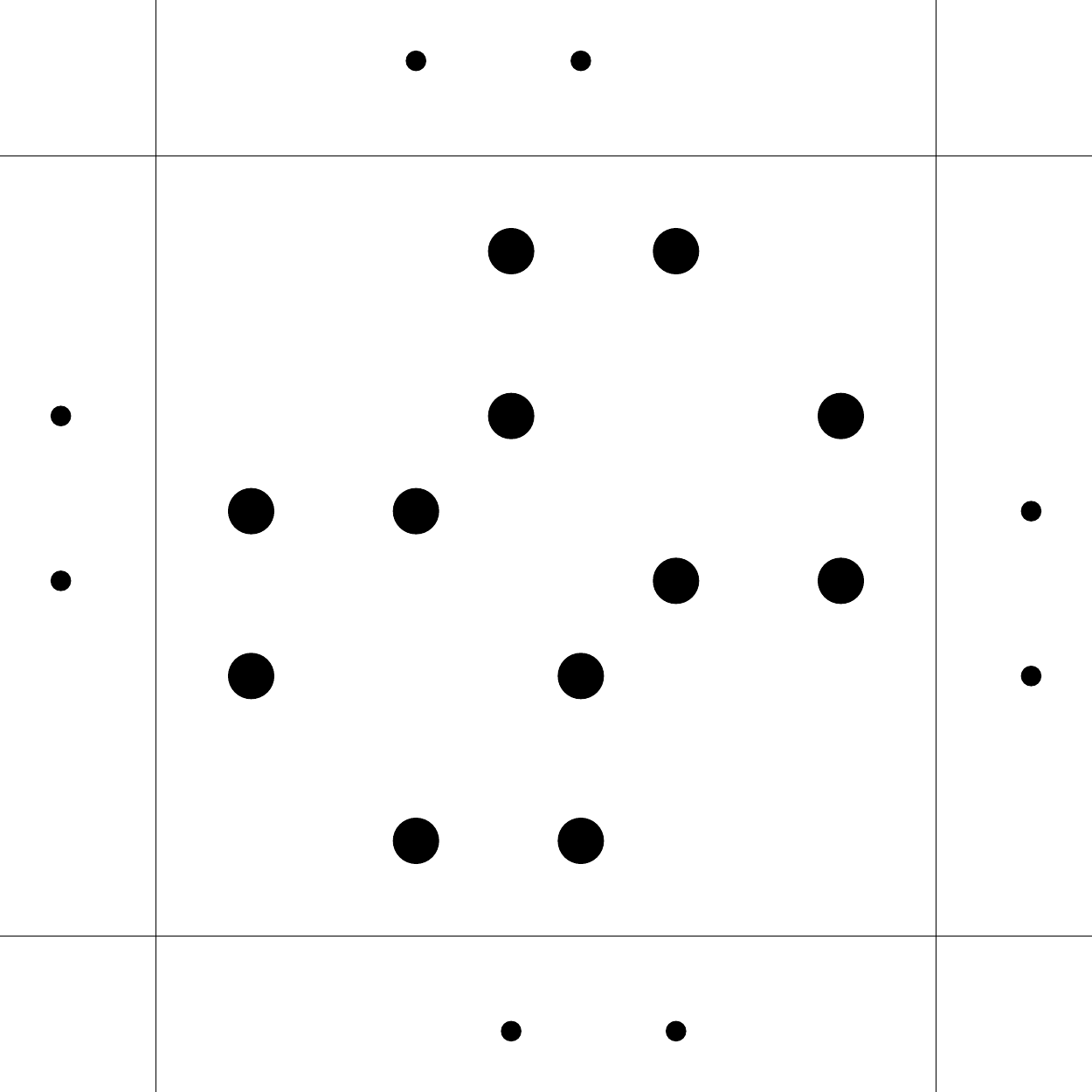}\\
 \caption{Truncated trihexagonal tiling}\end{figure}
  $$
\begin{array}{ll}
 u_1=\frac{1}{6}(2, 5-\sqrt{3}) &
 u_2=\frac{1}{6}(-1+\sqrt{3}, 5-\sqrt{3}) \\
 u_3=\frac{1}{6}(-1+\sqrt{3},2)&
 u_4=\frac{1}{6}( 2, -1+\sqrt{3}) \\
 u_5=\frac{1}{6}(5-\sqrt{3},-1+\sqrt{3}) &
 u_6=\frac{1}{6}(5-\sqrt{3} , 2) \\
 u_7=\frac{1}{6}( 4 ,7-\sqrt{3})&
 u_8=\frac{1}{6}(1+\sqrt{3} , 7-\sqrt{3}) \\
 u_9=\frac{1}{6}(1+\sqrt{3} ,4)&
 u_{10}=\frac{1}{6}(4, 1+\sqrt{3}) \\
 u_{11}=\frac{1}{6}(7-\sqrt{3},1+\sqrt{3})&
 u_{12}=\frac{1}{6}(7-\sqrt{3}, 4)
\end{array}
$$
and 
$$L^*=\left(
\begin{array}{cc}
 \frac{1}{2} \left(3+\sqrt{3}\right) & \frac{1}{2}
   \left(3-\sqrt{3}\right) \\
 \frac{1}{2} \left(3-\sqrt{3}\right) & \frac{1}{2}
   \left(3+\sqrt{3}\right) \\
\end{array}
\right).$$
\vskip0.2cm
\textbf{Example.} Choosing
  $$
\begin{array}{ll}
v_1=(0,0 )&
 v_2=(0,1)\\
 v_3=(1,0) &
 v_4=(1,1) \\
 v_5=(2,0) &
 v_6=(2,1) \end{array}\quad
 \begin{array}{ll}
 v_7=(3,0) &
v_8=(3,1) \\
 v_9=(4,0) &
 v_{10}=(4,1)\\
 v_{11}=(5,0) &
 v_{12}=(5,1) \\
\end{array}
$$
condition (A2) is satisfied with constants $c_1=2.71$ and $c_2=28.02$. 
\vskip0.2cm
\textbf{Example.} Choosing 
 $$
\begin{array}{ll}
v_1=(0,0 )&
 v_2=(0,1)\\
 v_3=(0,2) &
 v_4=(0,3) \\
 v_5=(1,0) &
 v_6=(1,1) \end{array}\quad
 \begin{array}{ll}
 v_7=(1,2) &
v_8=(1,3) \\
 v_9=(2,0) &
 v_{10}=(2,1)\\
 v_{11}=(2,2) &
 v_{12}=(2,3) \\
\end{array}
$$
condition (A2) is not satisfied.

\end{document}